\documentclass{article}

\usepackage{graphicx, amsmath, amsthm, amssymb} 
\usepackage{todonotes}

\newtheorem{thm}{Theorem}[section]
\newtheorem*{repskewthm}{Theorem 1.4}

\newtheorem{conj}[thm]{Conjecture}

\newtheorem{lem}[thm]{Lemma}
\newtheorem{corollary}[thm]{Corollary}
\newtheorem{claim}[thm]{Claim}

\newtheorem{fact}[thm]{Fact}

\newcommand{\remove}[1]{}

\newcommand\eps{\varepsilon}

\renewcommand\ge{\geqslant}
\renewcommand\le{\leqslant}

\renewcommand{\epsilon}{\eps}

\newcommand{\rom}[1]{\uppercase\expandafter{\romannumeral #1\relax}}

\title{Completing the proof of the Liebeck--Nikolov--Shalev conjecture}
\author{Noam Lifshitz}
\date{}

\begin{document}

\maketitle
\begin{abstract}
    Liebeck, Nikolov, and Shalev conjectured the existence of an absolute constant $C>0$, such that for every subset $A$ of a finite simple group $G$ with $|A|\ge 2$,  there exists $C\log|G|/\log|A|$ conjugates of $A$ whose product is $G$. This paper is a companion to \cite{GLPS}, and together they prove the conjecture. 

   To prove the conjecture, we establish the following skew-product theorem. We show that there exists \( c > 0 \) such that for all \( \epsilon > 0 \) and subsets \( A, B \subseteq G \) of finite simple groups of Lie type, if \( |B| < |G|^{1 - \epsilon} \), then \( |A^{\sigma} B| > |B||A|^{c \epsilon} \) for some \( \sigma \in G \). This result, along with its more involved analogue for alternating groups, constitutes the main contribution of this paper. 
   
Our proof leverages deep results from character theory alongside the probabilistic method.
\end{abstract}

\section{Introduction}

In the study of finite groups and their generating sets $A\subseteq G$, the field of `growth in finite groups' explores whether every element of $G$ can efficiently be expressed as a product of few elements from $A$. A striking result in this area, due to Liebeck and Shalev~\cite{liebeck2001diameters} demonstrates that elements of finite simple groups possess an optimal decomposition using elements from any conjugacy class. 

In light of this, Liebeck, Nikolov, and Shalev \cite{liebeck2012product} proposed a broader conjecture that encompasses the Liebeck-Shalev theorem and other significant results in group theory, such as those found in Liebeck and Pyber~\cite{liebeck2001finite}. This paper is a companion to Gill, Lifshitz, Pyber, and Szab\'{o}~\cite{GLPS}, and together they complete the proof of the conjecture.

\subsection{Growth in finite groups}
The product of two sets is defined via $AB:= \{ab: a\in A,b\in B\}$. Additioninally, the $i$th power of a set is given by $A^{i} = \{a_1\cdots a_i:\,a_1,\ldots, a_i\in A\}.$ The minimal $i$ for which $A^i=G$ is referred to as the \emph{covering number} of $A$. Liebeck and Shalev~\cite{liebeck2001diameters} established the existence of an absolute constant $C>0$, such that for every subset $A\subseteq G$ of a finite simple group, with  $|A|\ge 2$, the covering number of $A$ is at most $\frac{C\log |G|}{\log |A|}.$ This result matches the trivial lower bound on the covering number up to a constant factor.

The problem of determining the covering numbers of generating sets $A$ has been extensively studied also in the non-normal setting. Pyber and Szab\'{o}~\cite{pyber2016growth}, and independently Breuillard, Green, and Tao \cite{breuillard2011approximate}, showed that in a  finite simple group of Lie type of rank 
$r$, the covering number of a generating set $A$ is at most $\left( \frac{\log |G|}{\log |A|}\right)^{C},$ where $C =C(r)$ depends only on the rank. This generalized an earlier result for the special case of $SL_2(\mathbb{F}_p)$ due to Helfgott \cite{helfgott2008growth}. The well-known Babai's diameter problem aims to obtain a similar bound for all finite simple groups, with $C$ being an absolute constant.  

\subsection{Decomposing a group as a product of copies of a set}

The related problem of decomposing $G$ as a product of `copies' $A^{\sigma}:= \{ \sigma^{-1} a \sigma: a\in A\}$ of $A$ is also well studied. For instance, Liebeck and Pyber~\cite{liebeck2001finite} proved that if $G$ is a finite simple group of Lie type of characteristic $p$, then it is a product of $25$ of its Sylow $p$-subgroups, a result later improved from 25 to 5 by Babai, Nikolov, and Pyber~\cite{bnp}, and then to 4 independently by Smolenski~\cite{smolensky2016products} and Garonzi, Levy, Mar\'{o}ti, and Simion~\cite{garonzi2016minimal}. 

This led Liebeck, Nikolov, and Shalev~\cite{liebeck2010conjecture} to conjecture the existence of a constant $C>0$, such that for every subgroup $H$ of a finite simple group $G$ there exist  $O\left(\frac{\log |G|}{\log |H|}\right)$ conjugates of $H$ whose product is the whole group. They then went on in \cite{liebeck2012product} to state an even stronger conjecture that contains both the Liebeck--Shalev theorem~\cite{liebeck2001diameters} and their earlier conjecture as special cases. 

\begin{conj}[\cite{liebeck2012product}]
There exists an absolute constant $C>0,$ such that if $G$ is a finite simple group and $A\subseteq G$ has size $\ge 2$, then there exist $m\le C\frac{\log |G|}{\log |A|},$ and $\sigma_1,\ldots, \sigma_m\in G$ with $A^{\sigma_1}\cdots A^{\sigma_m} =G.$ 
\end{conj}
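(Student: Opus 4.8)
The plan is as follows. First, since the statement fails for abelian simple groups (take $G=\Z/p$ and $A=\{0,1\}$), we read the conjecture, as intended, for $G$ non-abelian simple, and split via the classification into the sporadic groups (where the conclusion holds trivially with $m$ an absolute constant, there being finitely many), the alternating groups, and the groups of Lie type. For the latter two families the argument has the same two-phase shape: a \emph{growth phase} that multiplies a product of conjugates of $A$ by a fixed power of $|A|$ at each step until it covers a $|G|^{-\eta}$-fraction of $G$ for a small absolute $\eta>0$, driven by the skew-product theorem (resp.\ its alternating analogue); and a bounded-length \emph{endgame}, handed to a character-theoretic covering estimate.

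For the growth phase in a group of Lie type, fix a small absolute $\eta>0$ and set $B_0=A$. Given $B_i=A^{\sigma_i}\cdots A^{\sigma_1}A$ with $|B_i|<|G|^{1-\eta}$, apply the skew-product theorem to the pair $(A,B_i)$ with parameter $\eta$: it produces $\sigma_{i+1}\in G$ such that $B_{i+1}:=A^{\sigma_{i+1}}B_i$ satisfies $|B_{i+1}|>|B_i|\,|A|^{c\eta}$. Hence $|B_i|\ge |A|^{\,1+ic\eta}$ for every $i$ before we stop, so after
\[
 j_0 \;=\; O\!\left(\frac{1}{c\eta}\cdot\frac{\log|G|}{\log|A|}\right)\;=\;O\!\left(\frac{\log|G|}{\log|A|}\right)
\]
steps (the implied constant absolute) we reach $|B_{j_0}|\ge |G|^{1-\eta}$ --- or else some earlier $B_i$ already equals $G$ and we stop sooner; if $|A|\ge |G|^{1-\eta}$ to begin with we skip straight to the endgame with $B_0=A$. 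Note that $B_{j_0}$, and likewise any conjugate $(B_{j_0})^{\tau}$, is a product of $j_0+1$ conjugates of $A$.

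For the endgame we invoke a covering estimate of the following type, which is where the deep character theory enters (and, for $A_n$, the ``more involved'' structural input): there are absolute constants $\eta>0$ and $N\in\N$ such that any $S$ in a non-abelian finite simple group $G$ with $|S|\ge |G|^{1-\eta}$ satisfies $S^{\tau_1}\cdots S^{\tau_N}=G$ for some $\tau_1,\dots,\tau_N\in G$. (For bounded-rank groups of Lie type this is immediate from Gowers-style quasirandomness; in general it rests on bounds for the relevant character sums, resp.\ on the companion results of \cite{GLPS}; the example $A_{n-1}\le A_n$, a subgroup of size $|G|^{1-o(1)}$, shows that taking genuine conjugates here is unavoidable.) Applying it with $S=B_{j_0}$ yields $A^{\rho_1}\cdots A^{\rho_m}=G$ with $m=N(j_0+1)=O(\log|G|/\log|A|)$, since $N$ is absolute and $\log|G|/\log|A|\ge 1$; the alternating case is run identically, with the alternating skew-product theorem and its endgame counterpart in place of the Lie-type ones.

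Granting the two skew-product theorems --- which are the real content, proved via character theory together with a probabilistic choice of the conjugating element --- the derivation is essentially bookkeeping, with two points requiring care. First, the growth phase must be stopped at the \emph{fixed} threshold $\eta$: chasing the exponent all the way to $0$ would make the per-step multiplicative gain tend to $1$ and push the step count up to $\Theta(\log^2|G|/\log|A|)$, whereas stopping at $\eta$ keeps it at $O(\log|G|/\log|A|)$. Second --- and this is the genuine obstacle --- the endgame for \emph{unbounded-rank} groups of Lie type and, most delicately, for alternating groups, where quasirandomness is far too weak and one must exploit finer structural information about large subsets. The transition between the phases and the reduction of the sporadic and small cases are routine.
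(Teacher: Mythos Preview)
Your proposal is correct and follows essentially the same two-phase architecture as the paper: an iterative growth phase driven by the skew-product theorem until the product reaches a fixed fraction $|G|^{1-\eta}$ (the paper uses $\eta=1/2$), followed by a bounded-length endgame via the companion covering result of \cite{GLPS} (Theorem~\ref{thm: GLPS main theorem} here). Your observation that stopping at a fixed $\eta$ is essential to keep the step count $O(\log|G|/\log|A|)$ is exactly the point, and your treatment of the Lie-type case matches the paper's Theorem~\ref{thm: Lie type LNS} almost line for line.

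The one place where your description is slightly loose is the alternating case. You speak of iterating ``the alternating skew-product theorem'', but the paper does \emph{not} have a clean analogue of Theorem~\ref{thm:skew product theorem} for $A_n$: the obvious skew-product statement fails when $A$ is concentrated in a translate of a conjugacy class of small support, and the paper has to handle that structured case separately (Lemmas~\ref{lem: growth or small support} and~\ref{lem: skew product theorem for elements of small support}), then assemble several growth regimes via a concatenation device (Claim~\ref{claim:concatenation}) into the direct product-growth statement Theorem~\ref{thm:alternating growth}. So for $A_n$ the growth phase is not a single repeated skew-product step but the output of that whole multi-stage argument, after which the GLPS endgame applies as you say. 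You acknowledge this in passing (``more involved analogue'', ``finer structural information''), so this is a matter of precision rather than a gap.
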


Their conjecture was proved for finite simple groups of Lie type in the bounded rank regime by Gill, Pyber, Short, and Szabo \cite{gill2013product}. More recently, Gill, Lifshitz, Pyber, and Szabó~\cite{GLPS} established the conjecture for arbitrary finite simple groups in the special case, where 
$A$ has size at least $|G|^c$ for an absolute constant $c>0$ (See Theorem \ref{thm:GLPS_intro} below).

This paper, together with the results from \cite{GLPS}, completes the proof of the Liebeck--Nikolov--Shalev conjecture and establishes the following stronger statement.

\begin{thm}\label{thm:main intro}
There exists an absolute constant $c>0,$ such that if $G$ is a finite simple group and $A\subseteq G$ has size $\ge 2$, then for every integer $i$ there exist $\sigma_1,\ldots, \sigma_i\in G$ with $|A^{\sigma_1}\cdots A^{\sigma_i}| \ge \min(|A|^{ci}, |G|).$ 
\end{thm}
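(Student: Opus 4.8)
\textbf{Proof plan.} The plan is to deduce Theorem~\ref{thm:main intro} by feeding the skew-product theorem (for $G$ of Lie type, and its more involved analogue for $G=A_n$) into a greedy construction, and then finishing with the large-set case from \cite{GLPS}. By the classification of finite simple groups it suffices to treat alternating groups and groups of Lie type: the $26$ sporadic groups --- and, if one prefers to use only the asymptotic form of the inputs, the finitely many small groups of the other types --- have bounded order, so for them every $\ge 2$-element subset has a finite covering number, and the reciprocal of the largest such number is one of the lower bounds we will impose on $c$ (here, as is standard in this area, $G$ is taken to be non-abelian). Write $c_0,C$ for the constants of Theorem~\ref{thm:GLPS_intro} and $c'$ for the constant of the skew-product theorem; we will take $c>0$ to be a sufficiently small absolute constant, below an explicit function of $c_0,c',C$.

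If $|A|\ge|G|^{c_0}$ the claim is immediate from Theorem~\ref{thm:GLPS_intro}: it produces $m\le C/c_0$ conjugates of $A$ with product $G$, so appending arbitrary further conjugates gives $G$ for all $i\ge m$, while for $i<m$ any product of $i$ conjugates of $A$ has size $\ge|A|\ge|A|^{ci}$ once $c\le 1/m$. So assume $|A|<|G|^{c_0}$, and build the $\sigma_j$ greedily. Put $B_1=A^{\sigma_1}$ with $\sigma_1$ arbitrary; then, as long as the set $B_{j-1}=A^{\sigma_{j-1}}\cdots A^{\sigma_1}$ satisfies $|B_{j-1}|<|G|^{c_0}$, apply the skew-product theorem (the Lie-type or the alternating version, as appropriate) with $\epsilon=1-c_0$ to obtain $\sigma_j$ with $|B_j|=|A^{\sigma_j}B_{j-1}|>|B_{j-1}|\,|A|^{c'(1-c_0)}$. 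Iterating, $|B_j|\ge|A|^{1+c'(1-c_0)(j-1)}$ while the process continues; since the right-hand side is unbounded, there is a first index $J$ with $|B_J|\ge|G|^{c_0}$. For $j\le J$ one checks from $|B_{j-1}|<|G|^{c_0}$ that $cj<\log|G|/\log|A|$, so $|A|^{cj}<|G|$, and hence $|B_j|\ge|A|^{1+c'(1-c_0)(j-1)}\ge|A|^{cj}=\min(|A|^{cj},|G|)$ provided $c\le\tfrac12 c'(1-c_0)$.

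For the endgame apply Theorem~\ref{thm:GLPS_intro} to the set $B_J$, which has size $\ge|G|^{c_0}$: there are $t\le C/c_0$ conjugates of $B_J$ whose product is $G$, and expanding each such conjugate into $J$ conjugates of $A$ exhibits $tJ$ conjugates of $A$ with product $G$. Thus for $i\ge tJ$ we obtain $G$ (appending arbitrary conjugates), and for $J\le i<tJ$ the first $J$ of the $\sigma$'s already yield a set of size $\ge|B_J|\ge|A|^{1+c'(1-c_0)(J-1)}$; provided, say, $c\le c'(1-c_0)c_0/(2C)$ this is $\ge|A|^{ctJ}\ge|A|^{ci}$ and is still $<|G|$, so the bound $\min(|A|^{ci},|G|)$ is attained. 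Taking $c$ to be the minimum of the finitely many constraints encountered --- morally $c\asymp c'(1-c_0)c_0/C$ --- finishes the deduction.

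The genuine difficulty lies, I expect, entirely outside this deduction: it is in proving the skew-product theorem itself and its (more involved) alternating-group analogue, whose proofs combine deep character-theoretic input with the probabilistic method. The one step in the deduction that requires care is keeping the exponent growth $|A|^{ci}$ synchronized across the transition from the skew-product regime --- where each step multiplies the current size by the fixed power $|A|^{c'(1-c_0)}$ of $|A|$ --- to the GLPS regime --- where the current set is already polynomially large in $|G|$ and the remaining bounded number of blocks of conjugates need only preserve its size while completing the cover of $G$; it is this synchronization that pins down the admissible range of the absolute constant $c$.
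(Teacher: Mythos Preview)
Your deduction is exactly right for groups of Lie type, and indeed mirrors the paper's proof of Theorem~\ref{thm: Lie type LNS}: iterate Theorem~\ref{thm:skew product theorem} with $\epsilon=1-c_0$ until the product exceeds $|G|^{c_0}$, then finish with Theorem~\ref{thm:GLPS_intro}. The bookkeeping you do to synchronize exponents across the transition is also the paper's.

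The gap is in the alternating case. You write ``apply the skew-product theorem (the Lie-type or the alternating version, as appropriate) \ldots\ to obtain $\sigma_j$ with $|A^{\sigma_j}B_{j-1}|>|B_{j-1}|\,|A|^{c'(1-c_0)}$'', but no such clean two-set statement exists for $A_n$, and the paper is explicit that it could not obtain one. What is available is only a conditional skew-product lemma (Lemma~\ref{lem: growth or small support}): either $|A^{\sigma}B|\ge |A|^c|B|$ \emph{or} a translate of $A$ contains $|A|^{0.9}$ elements of very small support. In the latter case your greedy iteration simply stalls, and handling it is the bulk of the alternating argument --- a separate second-moment growth lemma for small-support sets (Lemma~\ref{lem: skew product theorem for elements of small support}), reductions via Lemma~\ref{lem: getting rid of the easy either case in either or situations.}, and a staged bootstrap through several size thresholds (Lemmas~\ref{lem: always growing small sets alternating groups}, \ref{lem: medium sets always growing alternating groups}, \ref{lem: Growth for large sets}) glued by Claim~\ref{claim:concatenation}. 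The output of all this is not a two-set skew-product theorem but the multi-set statement Theorem~\ref{thm:alternating growth}, and it is \emph{that} which the paper feeds into Theorem~\ref{thm:GLPS_intro} to finish. So your reduction template is correct, but the black box you invoke for $A_n$ has the wrong shape: you should take Theorem~\ref{thm:alternating growth} as the alternating input rather than an unavailable $|A^{\sigma}B|\ge|A|^{c\epsilon}|B|$ statement.
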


\subsection{Skew Product theorems}

Upper bounds on covering numbers are often proved with the help of product theorems. Indeed, both Pyber and Szab\'{o}~\cite{pyber2016growth} and Breuillard, Green, and Tao~\cite{breuillard2011approximate} deduced their upper bound on the covering number by proving a product theorem first.  Their product theorem states that for every rank $r$, there exists $\epsilon(r)$, such that for every finite simple group of rank $r$ and every generating subset $A\subseteq G$, $|A^3| \ge \min(|A|^{1+\epsilon}, |G|)$. 
Gill, Pyber, Short, and Szabo~\cite{gill2013product} later showed that for every finite simple groups of Lie type of rank $r$ for all sets $A\subseteq G$ either $|A^{\sigma} A|>|A|^{1+\epsilon}$ for some $\epsilon = \epsilon(r)$ and $\sigma \in G$ or $A^3=G.$ They also showed a similar result holds for conjugacy classes in arbitrary finite simple groups. Recently, Skresesanov~\cite{Skresanov2024expanders} showed that if $A$ is a conjugacy class, then either $|A^2|>|A|^{1+\epsilon}$ for some $\epsilon = \epsilon(r)$ or $A^2 = G\setminus\{1\}.$

This paper is a companion to Gill, Lifshitz, Pyber, Szab\'{o}~\cite{GLPS} who proved the following growth result, which they termed a skew product theorem. They showed that there exists an absolute constants $c>0$, such that for every finite simple group $G$ and every  $A\subseteq G$ either there exists $\sigma \in G$ with $|A^{\sigma} A|\ge |A|^{1+\epsilon}$ or there exist $C$ conjugates of $A$ whose product is the whole group. They then deduced the following as a corollary.

\begin{thm}[\cite{GLPS}\label{GLPS theorem}]\label{thm:GLPS_intro}\label{thm: GLPS main theorem}
For every $c>0$ there exists $C>0$, such that if $G$ is a finite simple group and $A\subseteq G$ has size $\ge \max(|G|^{c},2),$ then there exist $C$ conjugates of $A$ whose product is the whole group.
\end{thm}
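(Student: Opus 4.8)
The plan is to deduce the statement from the skew product theorem of \cite{GLPS} quoted just above, by iterating it. Write $\epsilon, C_0 > 0$ for the absolute constants it provides, so that for every finite simple group $G$ and every $B \subseteq G$ with $|B| \ge 2$, either $|B^{\sigma} B| \ge |B|^{1+\epsilon}$ for some $\sigma \in G$, or $G$ is a product of $C_0$ conjugates of $B$. Fix $c > 0$; the goal is to produce a constant $C = C(c)$ that works.

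First I would set $A_0 = A$ and run the following process. Having produced a set $A_k \subseteq G$ that is a product of $2^k$ conjugates of $A$ and has $|A_k| \ge |A|^{(1+\epsilon)^k} \ge 2$, apply the skew product theorem to $A_k$. If $G$ is a product of $C_0$ conjugates of $A_k$, stop: since conjugation is an automorphism, each conjugate of $A_k$ is again a product of $2^k$ conjugates of $A$, so $G$ is a product of $2^k C_0$ conjugates of $A$. Otherwise, pick $\sigma$ with $|A_k^{\sigma} A_k| \ge |A_k|^{1+\epsilon}$, set $A_{k+1} = A_k^{\sigma} A_k$ — a product of $2^{k+1}$ conjugates of $A$ with $|A_{k+1}| \ge |A|^{(1+\epsilon)^{k+1}}$ — and continue.

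The crucial observation is that the process halts after a bounded number of rounds. Since $A_k \subseteq G$ and $|A| \ge |G|^c$, we have $(1+\epsilon)^k \log|A| \le \log|A_k| \le \log|G| \le \tfrac1c \log|A|$, hence $k \le k_0 := \lceil \log(1/c)/\log(1+\epsilon) \rceil$, a quantity depending only on $c$. So the process must reach its stopping condition at some step $k \le k_0$, exhibiting $G$ as a product of at most $2^{k_0} C_0$ conjugates of $A$. Taking $C = 2^{k_0} C_0$ then finishes the proof.

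There is no serious obstacle in this particular deduction beyond bookkeeping of constants; the substantive content lives entirely inside the skew product theorem. The two points to watch are that the hypothesis $|A_k| \ge 2$ of that theorem is maintained throughout the iteration (it is, since $|A_k| \ge |A| \ge 2$), and that doubling the number of conjugates each round is affordable precisely because the number of rounds is a bounded function of $c$. If instead one insisted on tracking a single growing product chain $A^{\sigma_1}\cdots A^{\sigma_j}$ and appending one conjugate of $A$ at a time, one would need a one-sided growth statement of the form $|A^{\sigma} B| > |B||A|^{c\epsilon}$ — exactly the refinement that the present paper is devoted to establishing — rather than the two-sided version, which already suffices for the corollary stated here.
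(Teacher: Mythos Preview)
Your deduction is correct and matches what the paper describes: this theorem is not proved in the present paper but is cited from \cite{GLPS}, with the remark that they ``deduced the following as a corollary'' of their skew product theorem --- and the iterative doubling you carry out is exactly the natural way to extract that corollary. Your bookkeeping (the bound $k\le k_0$, the maintenance of $|A_k|\ge 2$, and the observation that a conjugate of a product of conjugates is again such a product) is all in order, and your closing remark about why the two-sided growth $|B^{\sigma}B|\ge |B|^{1+\epsilon}$ suffices here while the one-sided refinement is needed elsewhere is an apt contextual observation.
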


Thus, their result establishes optimal growth for sets of size $\ge |G|^{c}$. Prior to this work Dona~\cite{dona2024writing} showed that for every $\epsilon>0$, there exists $N_{\epsilon}>0$, such that if $G$ is a finite simple group of Lie type and $A\subseteq G$ is with $|A|\ge 2$, then there are $N_{\epsilon} \left(\frac{\log{|G|}}{\log |A|}\right)^{1+\epsilon}$ conjugates of either $A$ or $A^{-1}$ whose product is the whole group.

In this paper, we complement \cite{GLPS} by proving the following skew-product theorem, which is suitable for the growth of smaller sets. The following theorem applies only to finite simple groups of Lie type.

\begin{thm}\label{thm:skew product theorem_intro}\label{thm:skew product theorem}
     There exist $c>0$, such that the following holds. Let $G$ be a finite simple group of Lie type, let $\epsilon >0$, and suppose that sets $A,B\subseteq G$ satisfy $|B|<\min(|G|^{1 -\epsilon}, c|G|).$ Then 
    \[
    |A^\sigma B| \ge |A|^{c\epsilon}|B|
    \]
    for some $\sigma \in G.$
\end{thm}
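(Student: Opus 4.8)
The plan is to combine the probabilistic method — averaging over a uniformly random conjugator $\sigma\in G$ — with character‑theoretic input for groups of Lie type. Since $|A^\sigma B|=|(a_0^{-1}A)^\sigma B|$ for every $a_0\in A$, we may assume $1\in A$, so that $B\subseteq A^\sigma B$. Write $r_\sigma(x)=\#\{(a,b)\in A\times B:\sigma^{-1}a\sigma\,b=x\}$; then $\sum_x r_\sigma(x)=|A||B|$, and by Cauchy--Schwarz $|A^\sigma B|\ge (|A||B|)^2/\sum_x r_\sigma(x)^2$. Thus it suffices to find $\sigma$ with $\sum_x r_\sigma(x)^2\le |A|^{2-c\epsilon}|B|$, and for that it is enough to bound the average over $\sigma$. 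Expanding the square and using column orthogonality of the character table gives the identity
\[
\Expect{\sigma\in G}{\ \sum_x r_\sigma(x)^2\ }=\frac1{|G|}\sum_{\chi\in\mathrm{Irr}(G)}\norm{\widehat{1_A}(\chi)}_{\HS}^2\,\norm{\widehat{1_B}(\chi)}_{\HS}^2 ,\qquad \widehat{1_S}(\chi)=\sum_{s\in S}\rho_\chi(s).
\]
The trivial character contributes $|A|^2|B|^2/|G|$, which is $\le\frac12|A|^{2-c\epsilon}|B|$ because $|B|<|G|^{1-\epsilon}$ forces $|A|^{c\epsilon}|B|<|G|^{1-\epsilon(1-c)}<|G|$ for $c<1$ (with a factor of $2$ to spare, away from the regime $\epsilon=O(1/\log|G|)$, which is treated separately below). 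So the theorem reduces to the character‑sum estimate
\[
\sum_{1\ne\chi\in\mathrm{Irr}(G)}\norm{\widehat{1_A}(\chi)}_{\HS}^2\,\norm{\widehat{1_B}(\chi)}_{\HS}^2\ \le\ \tfrac12\,|G|\,|A|^{2-c\epsilon}|B| .
\]

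For each $S\in\{A,B\}$ one has the trivial bound $\norm{\widehat{1_S}(\chi)}_{\HS}^2\le|S|^2\chi(1)$, the refined bound $\norm{\widehat{1_S}(\chi)}_{\HS}^2=|S|\chi(1)+\sum_{s\ne s'}\chi(s'^{-1}s)\le |S|\chi(1)+|S|^2\max_{g\ne1}|\chi(g)|$, and the Plancherel identity $\sum_\chi\chi(1)\norm{\widehat{1_S}(\chi)}_{\HS}^2=|G||S|$. Fix a small absolute power $D=|G|^{\delta}$ and split the nontrivial irreducible characters into low‑level ($\chi(1)\le D$) and high‑level ($\chi(1)>D$) ones. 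For the high‑level part one invokes the character bounds of Larsen--Tiep and of Bezrukavnikov--Liebeck--Shalev--Tiep: for $g\ne1$ and $\chi(1)>D$ one gets $|\chi(g)|/\chi(1)\le\lambda_G$ with $\lambda_G\to0$ as $|G|\to\infty$ — here one may assume the rank is large, since for bounded rank the whole theorem is already due to Gill--Pyber--Short--Szab\'{o}~\cite{gill2013product}, and large rank is exactly what makes $\lambda_G$ small. Feeding the refined bound into the character sum and using the Plancherel identity for $B$ bounds the high‑level contribution by $|A||B|(1+|A|\lambda_G)(1+|B|\lambda_G)$, which lies below the target for a small absolute $c$ once $|G|$ is large.

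What remains — and what I expect to be the main obstacle — is the low‑level characters. A finite simple group of Lie type has only $O(D^{2})$ characters of degree $\le D$ (boundedness of its representation zeta function at $s=2$), but the trivial bound on $\norm{\widehat{1_S}(\chi)}_{\HS}^2$ is too wasteful when $|S|$ is not small, since $1_S$ could in principle concentrate on low‑level isotypic components. The resolution is a dichotomy. If $1_A$ and $1_B$ are \emph{global} — not essentially supported on a coset of a proper subgroup, equivalently with small Fourier mass on low‑level components — then a hypercontractive (level‑$d$) inequality forces $\norm{\widehat{1_S}(\chi)}_{\HS}^2$ to be genuinely smaller than $|S|^2\chi(1)$ for every low‑level $\chi$, and the low‑level contribution is negligible. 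In the opposite case one of $A,B$ is essentially contained in a coset $gH$ of a proper subgroup $H$; then one reduces to a growth statement for $H$ (or a dense subset), which is handled by the covering theorems for subgroups of simple groups (Liebeck--Pyber and its refinements) together with the companion results of \cite{GLPS} for sets of polynomial size. A handful of boundary regimes are dispatched directly: bounded rank via \cite{gill2013product}; $|A|$ or $|B|$ within a fixed power of $|G|$ via \cite{GLPS}; small $|G|$ by inspection; and $\epsilon=O(1/\log|G|)$, where the target growth factor $|A|^{c\epsilon}$ is bounded, by a soft argument. I expect the global/localized dichotomy and the analysis of the localized case to carry the bulk of the work, and the place where the most delicate use of the character‑theoretic machinery is needed.
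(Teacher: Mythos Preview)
Your setup --- averaging over $\sigma$, reducing via Cauchy--Schwarz to the generalized Frobenius identity, and splitting characters by dimension --- matches the paper's. But the decisive step does not go through as you describe it, and you have located the difficulty in the wrong place.

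For the high-level characters you use only the pointwise ratio $\max_{g\ne1}|\chi(g)|/\chi(1)\le\lambda_G$. Even feeding in Larsen--Tiep together with the minimal conjugacy-class size $q^{\Theta(n)}$, this gives at best $\lambda_G\approx q^{-\Theta(n)}$ for $G=Cl_n(\mathbb F_q)$. Your high-level estimate (via Plancherel for $B$) then requires $1+|A|\lambda_G\lesssim |A|^{1-c\epsilon}$, i.e.\ $\lambda_G\lesssim|A|^{-c\epsilon}$, and this fails once $|A|\gg q^{n/\epsilon}$: for instance with $\epsilon=1/2$ and $|A|\approx|G|^{1/3}\approx q^{\Theta(n^2)}$ the inequality is off by a factor of $q^{\Theta(n^2)}$. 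Deferring large $|A|$ to \cite{GLPS} does not save this, since that paper's skew-product statement concerns $|A^\sigma A|$, not $|A^\sigma B|$ for arbitrary $B$. The paper avoids the problem by never taking a pointwise maximum. It rewrites $\|\widehat{1_A}(\chi)\|_{\HS}^2=|A|^2\,\mathbb E_{x,y\sim A}[\chi(y^{-1}x)]$ and bounds the expectation by applying Larsen--Tiep to the density $g=f'\!*\!f$ of the random element $y^{-1}x$ (with $f$ the normalized indicator of $A$): since $\|g\|_\infty\le|G|/|A|$, most of the mass of $g$ lies on conjugacy classes of size $\ge|A|^{1-o(1)}$, and on that part Larsen--Tiep gives $|\chi(\cdot)|/\chi(1)\le\chi(1)^{-c\log|A|/\log|G|}\le|A|^{-c\epsilon}$ whenever $\chi(1)\ge|G|^{\epsilon}$. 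This yields $\|\widehat{1_A}(\chi)\|_{\HS}^2\le|A|^{2-c\epsilon}\chi(1)$, precisely the $|A|$-dependent gain that your worst-case $\lambda_G$ cannot produce.

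Conversely, the low-level range is not where the obstruction lies. The trivial bound $\|\widehat{1_S}(\chi)\|_{\HS}^2\le|S|^2\chi(1)$ already gives a low-level contribution $\le|A|^2|B|^2\sum_{\chi(1)\le D}\chi(1)^2$, and by the Larsen--Taylor--Tiep count of low-degree representations (at most $D^{C/n}$ of them below $D$) one has $\sum_{\chi(1)\le D}\chi(1)^2\le D^{2+C/n}\le|G|^{\epsilon/3}$ for $D=|G|^{c'\epsilon}$; combined with $|B|<|G|^{1-\epsilon}$ this already beats the target. No hypercontractivity, no global/local dichotomy, and no reduction to cosets of proper subgroups is needed --- in the paper this range is a two-line estimate.
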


For alternating groups, we were able to obtain a similar result, with the exception of one case where $A$ has a special structure.  We then manage to address the structured case separately (See Lemma \ref{lem: growth or small support} below). This allows us to obtain the following theorem, which completes the proof of the Liebeck--Nikolov--Shalev conjecture for alternating groups in conjunction with Theorem~\ref{thm: GLPS main theorem}.

\begin{thm}\label{thm:alternating growth}
    There  exists an absolute constant $c>0$, such that if $G$ is an alternatating group, then for every subsets $A_1,\ldots, A_i\subseteq G$, there exist $\sigma_1,\ldots ,\sigma_i \in G$ with $|A_1^{\sigma_1} A_2^{\sigma_2}\cdots A_i^{\sigma_i}|\ge \min(|A_1|\cdots |A_i|, |G|)^{c}.$
\end{thm}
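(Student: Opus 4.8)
The plan is to iterate a single-step skew-product theorem for alternating groups and then remove the final dependence on $|G|$ by a standard amplification. Concretely, I would first establish the one-step analogue of Theorem~\ref{thm:skew product theorem} announced in the text: there is $c_0>0$ such that for all $\epsilon>0$ and all $A,B\subseteq G=\mathrm{Alt}(n)$ with $|B|<|G|^{1-\epsilon}$, \emph{either} $|A^\sigma B|\ge |A|^{c_0\epsilon}|B|$ for some $\sigma\in G$, \emph{or} $A$ falls into the exceptional ``structured'' case — roughly, $A$ (up to a coset) is essentially supported on permutations moving few points, i.e.\ concentrated near a Young subgroup $\mathrm{Sym}(k)\times\mathrm{Sym}(n-k)$ with $k$ small. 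For the structured case I would invoke Lemma~\ref{lem: growth or small support}, which handles exactly this configuration and still yields a power gain in the product $|A^\sigma B|$ relative to $|A|\,|B|$. The engine behind the non-structured case should be the character-theoretic/probabilistic machinery advertised in the abstract: expanding $A^\sigma B$ by bounding $\Pr_\sigma[g\in A^\sigma B]$ via $\sum_\chi$-type expansions and using that non-trivial irreducible characters of $\mathrm{Alt}(n)$ are small (Roichman-type bounds, or bounds via character ratios on permutations with large support), so that if $A$ has no structure then the ``collision'' term is negligible and $|A^\sigma B|$ is forced to be close to $\min(|A|\,|B|,|G|)$.

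With the one-step statement in hand, the second step is the iteration. Given $A_1,\dots,A_i$, I would build the product $P_j := A_1^{\sigma_1}\cdots A_j^{\sigma_j}$ greedily: having chosen $\sigma_1,\dots,\sigma_{j-1}$, apply the one-step theorem with $A:=A_j$ and $B:=P_{j-1}$ to pick $\sigma_j$ with $|P_j|\ge |A_j|^{c_0\epsilon_j}|P_{j-1}|$, where $\epsilon_j$ is chosen so that $|P_{j-1}|=|G|^{1-\epsilon_j}$. As long as $|P_{j-1}|<|G|^{1/2}$ we may take $\epsilon_j\ge 1/2$, so each step multiplies the size by at least $|A_j|^{c_0/2}$; telescoping gives $|P_i|\ge \bigl(\prod_j |A_j|\bigr)^{c_0/2}$ unless at some stage $|P_{j-1}|\ge |G|^{1/2}$, in which case $|P_j|\ge |G|^{1/2}\ge\min(\prod|A_j|,|G|)^{1/2}$ already. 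Either way one gets the bound with $c=c_0/2$. One subtlety: the structured (small-support) case of $A_j$ must be fed consistently into the induction — here $P_{j-1}$ is a genuine subset, not a structured one, and Lemma~\ref{lem: growth or small support} is designed to cover precisely ``structured set times arbitrary set,'' so the greedy scheme still applies; I would also need to handle $|A_j|=1$ trivially (such factors contribute nothing and can be dropped, adjusting the constant).

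A couple of routine fixes at the end: passing between $\min(\prod|A_j|,|G|)^c$ and the per-step multiplicative bound requires noting that once $|P_j|$ exceeds $|G|^{1/2}$ we are done, and that we never ``overshoot'' in a harmful way since sizes only grow. I would also absorb the small-$n$ cases (where $\mathrm{Alt}(n)$ is not quite in the asymptotic regime of the character bounds) into the constant by brute force, since there are finitely many.

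\medskip

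\noindent\textbf{Main obstacle.} The crux is the exceptional structured case and its interface with the iteration. Proving the clean dichotomy ``growth or small support'' for $A$ against an \emph{arbitrary} $B$ — rather than, say, $B$ being itself structured — is where the alternating-group argument departs from the Lie-type one: the character sums controlling $\Pr_\sigma[g\in A^\sigma B]$ are dominated exactly by permutations of small support (the characters there are large), so isolating that contribution, quantifying when $A$'s mass actually lies there, and then showing Lemma~\ref{lem: growth or small support} still extracts a power gain in that regime, is the delicate part. The Lie-type Theorem~\ref{thm:skew product theorem} has no such exceptional case, so the real work specific to alternating groups is making the structured case both \emph{precise enough} to be provable and \emph{robust enough} to survive being used as the inductive step against growing sets $P_{j-1}$.
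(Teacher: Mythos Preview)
Your plan has a genuine gap, and it is precisely the one you flag in your ``Main obstacle'' paragraph but do not actually resolve. You assume that Lemma~\ref{lem: growth or small support} gives, for structured $A$ (small support) and \emph{arbitrary} $B$, a power gain $|A^{\sigma}B|\ge |A|^{c}|B|$. It does not: that lemma \emph{is} the dichotomy (growth or structure), not a handler for the structured branch. The paper never proves a one-step growth statement for small-support $A$ against arbitrary $B$, and it is far from clear such a statement is true with uniform constants --- the character bound $|\chi(\sigma)|\le \chi(1)^{1-cs/(n\log n)}$ gives essentially nothing when the support $s$ is as small as $\epsilon\log|A|$, so the Fourier argument collapses exactly there.

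What the paper does instead is fundamentally different from a greedy single-step iteration. It abandons the ``structured $A$ vs.\ arbitrary $B$'' problem entirely and works with all the $A_j$ simultaneously: a reduction lemma (Lemma~\ref{lem: getting rid of the easy either case in either or situations.}) shows it suffices to treat the case where \emph{every} $A_j$ is structured, and then a direct second-moment argument (Lemma~\ref{lem: skew product theorem for elements of small support}) grows products of small-support sets by making their supports disjoint after conjugation. This only reaches size $e^{Cn}$, because once the accumulated support hits $n/4$ the disjointness trick stops. Beyond $e^{Cn}$ the structure changes completely: the relevant dichotomy becomes ``growth or concentration in a $t$-umvirate'' (Lemmas~\ref{lem: growth or concentration in a conjugacy class}--\ref{lem: growth or coset}), and growth is again obtained by a multi-set argument embedding conjugates into a direct product of Young-type subgroups. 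The final proof stitches together four size regimes ($e^{Cn}$, $e^{cn\sqrt{\log n}}$, $e^{cn\log^{3/4}n}$, $n^{cn}$) via a concatenation claim (Claim~\ref{claim:concatenation}). Your greedy scheme $P_j=A_j^{\sigma_j}P_{j-1}$ cannot be made to work because the structured branch at step $j$ gives you information about $A_j$, not about $P_{j-1}$, and there is no single-step lemma that closes that branch.
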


\subsection{Method}

A subset \( A \subseteq G \) is \emph{normal} if \( A^{\sigma} = A \) for all \( \sigma \in G \). Character theory is extensively used in the study of the growth of normal sets (see, e.g., \cite{liebeck2001diameters, larsen2008characters, shalev2009word, larsen2024uniform}).

One of the main tools that we develop in this paper is the following result that opens the door for deducing skew-product theorems for non-normal sets via input from character theory. 

\begin{lem}\label{lem:from character theory to growth}
Let $G$ be a finite group, $\epsilon >0$, and \( K < |G|^{\epsilon/2}.\) Suppose that $S$ is a set of irreducible characters for $G$ with $\sum_{\chi \notin S} \chi(1)^2 < |G|^\epsilon.$ Suppose additionally that for every $\chi \in S$ and random independent elements $x,y \sim A$, we have
\[|\mathbb{E}[\chi (y^{-1}x)]| \le \frac{\chi(1)}{K}.\] 
Then for all sets $B$ with $|B|<|G|^{1-2\epsilon},$ there exists $\sigma\in G$ with 
\[
|A^{\sigma} B| > \frac{K|B|}{1+ |G|^{-\epsilon/2}}. 
\]
\end{lem}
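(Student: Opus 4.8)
The plan is to bound, for a uniformly random conjugator $\sigma\in G$, the convolution second moment $\|1_{A^\sigma}\ast 1_B\|_2^2$ (convolution on $G$ with counting measure), and then extract a single good $\sigma$ by averaging. The elementary starting point is that $1_{A^\sigma}\ast 1_B$ is supported on $A^\sigma B$ and has total mass $|A||B|$, so Cauchy--Schwarz gives $|A^\sigma B|\ge |A|^2|B|^2/\|1_{A^\sigma}\ast 1_B\|_2^2$. Thus it suffices to show that $\E_\sigma\|1_{A^\sigma}\ast 1_B\|_2^2$ is not much larger than $|A|^2|B|/K$.

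For the second-moment computation I would pass to the Fourier side on $G$. By Plancherel and the convolution theorem, together with the covariance $\widehat{1_{A^\sigma}}(\rho)=\rho(\sigma)^{-1}\widehat{1_A}(\rho)\rho(\sigma)$ and the unitary invariance of the Hilbert--Schmidt norm,
\[
\|1_{A^\sigma}\ast 1_B\|_2^2=\frac{1}{|G|}\sum_{\rho}\chi_\rho(1)\,\bigl\|\widehat{1_A}(\rho)\,\rho(\sigma)\,\widehat{1_B}(\rho)\bigr\|_{\HS}^2,
\]
the sum running over the irreducible representations of $G$. Averaging over $\sigma$ and applying Schur's lemma in the form $\E_\sigma[\rho(\sigma)^{-1}P\rho(\sigma)]=(\operatorname{tr}P/\chi_\rho(1))\,I$ collapses the $\sigma$-average of each summand to $\|\widehat{1_A}(\rho)\|_{\HS}^2\|\widehat{1_B}(\rho)\|_{\HS}^2/\chi_\rho(1)$; the dimension weights cancel and one is left with the clean decoupled identity
\[
\E_\sigma\|1_{A^\sigma}\ast 1_B\|_2^2=\frac{1}{|G|}\sum_{\rho}\|\widehat{1_A}(\rho)\|_{\HS}^2\,\|\widehat{1_B}(\rho)\|_{\HS}^2 .
\]
I expect this Schur-averaging step to be the crux of the argument; everything afterwards is bookkeeping.

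To finish I would split the sum over $\rho$ according to whether $\chi_\rho\in S$, using the following facts: $\|\widehat{1_A}(\rho)\|_{\HS}^2=|A|^2\bigl|\E_{x,y\sim A}[\chi_\rho(y^{-1}x)]\bigr|$; the trivial bounds $\|\widehat{1_A}(\rho)\|_{\HS}^2\le |A|^2\chi_\rho(1)$ and $\|\widehat{1_B}(\rho)\|_{\HS}^2\le |B|^2\chi_\rho(1)$; and Parseval's identity $\sum_\rho\chi_\rho(1)\|\widehat{1_B}(\rho)\|_{\HS}^2=|G||B|$. For $\chi_\rho\in S$ the hypothesis gives $\|\widehat{1_A}(\rho)\|_{\HS}^2\le |A|^2\chi_\rho(1)/K$, so that block contributes at most $\frac{|A|^2}{K}\sum_\rho\chi_\rho(1)\|\widehat{1_B}(\rho)\|_{\HS}^2=|A|^2|B||G|/K$. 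For $\chi_\rho\notin S$ the two trivial bounds together with $\sum_{\chi\notin S}\chi(1)^2<|G|^\epsilon$ give a contribution less than $|A|^2|B|^2|G|^\epsilon$. Dividing by $|G|$ yields $\E_\sigma\|1_{A^\sigma}\ast 1_B\|_2^2<\frac{|A|^2|B|}{K}\bigl(1+K|B||G|^{\epsilon-1}\bigr)$, so some $\sigma$ beats this bound, and Cauchy--Schwarz then gives $|A^\sigma B|>\frac{K|B|}{1+K|B||G|^{\epsilon-1}}$. The constraints $K<|G|^{\epsilon/2}$ and $|B|<|G|^{1-2\epsilon}$ force $K|B||G|^{\epsilon-1}<|G|^{-\epsilon/2}$, turning this into the stated bound $|A^\sigma B|>K|B|/(1+|G|^{-\epsilon/2})$.
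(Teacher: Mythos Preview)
Your proof is correct and follows essentially the same route as the paper's. The paper packages your Schur-averaging identity as the ``generalized Frobenius formula'' of Gill--Lifshitz--Pyber--Szab\'{o} (Lemma~2.13), stated in the language of isotypic projections $f^{=\chi}$ of the normalized density functions rather than Hilbert--Schmidt norms of matrix Fourier coefficients; it then isolates the splitting-and-Cauchy--Schwarz step as a separate ``wrapping up'' lemma (Lemma~3.2) before specializing the parameters exactly as you do. The only cosmetic difference is that the paper works with densities $f=1_A/\mu(A)$ and the normalized $L^2$ inner product throughout, whereas you use counting measure; the two formulations are equivalent and every step matches.
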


To obtain Theorem \ref{thm:skew product theorem_intro}, our main idea is to combine Lemma~\ref{lem:from character theory to growth} with input from character theory. In particular, we make use use of the recent character bounds obtained by Larsen and Tiep~\cite{larsen2024uniform}, which build upon the works of Guralnick, Larsen, and Tiep~\cite{guralnick2020character,guralnick2024character}. Their deep sequence of papers utilizes Deligne--Lustig theory, to obtain strong character bounds for the finite classical groups.   

To complete the proof of Theorem \ref{thm:main intro} for alternating groups, we rely on more classical character bounds due to M\"{u}ller and Schlage-Puchta~\cite{muller2007character}. Specifically, for given sets \( A \) and \( B \), we prove variuos lemmas showing that either there is growth of the form \( |A^{\sigma} B| > |A|^c |B| \) for an absolute constant \( c > 0 \), or the sets exhibit a specific structure. We then prove Theorem \ref{thm:alternating growth} for structured sets using the probabilistic method. 

\section{Preliminaries}
\subsection{Character bounds}

Let $\sigma \in S_n$ be a permutation. The \emph{support} of $\sigma$ is the set of coordinate $i$ with $\sigma(i)\ne i$. Thus, if $\sigma\in S_n$ has $f$ fixed points, then the support size of  $\sigma$ is $n-f$. The following character bound for the symmetric group is due to M\"{u}ller and Schlage-Puchta~\cite{muller2007character}.  
\begin{thm}\label{thm:Schlage-Puchta}
    There exists an absolute constant $c>0$, such that the following holds. Suppose that $\sigma \in S_n$ has support size $s$. Then $\chi(\sigma) \le \chi(1)^{1 - \frac{c s}{n \log n}}$ for every character $\chi$ of $S_n$.
\end{thm}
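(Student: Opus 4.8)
The plan is to deduce the theorem from two inputs after a case split on the shape of $\lambda\vdash n$. Normalize so that $\sigma$ fixes $\{s+1,\dots,n\}$ and moves $\{1,\dots,s\}$ with no fixed point, and write $\theta=cs/(n\log n)$; since $\chi_{\lambda'}=\sign\cdot\chi_\lambda$ we may assume $\lambda_1\ge\lambda_1'$. \emph{Input (i):} if $\lambda_1=n-k$ then the cells $(1,j)$ with $j>\lambda_2$ have leg length $0$ and hook lengths $1,2,\dots,n-k-\lambda_2$, contributing $(n-k-\lambda_2)!\ge(n-2k)!$ to $\prod_u h(u)$, so the hook-length formula gives $\chi_\lambda(1)=n!/\prod_u h(u)\le n!/(n-2k)!\le n^{2k}$, i.e.\ $\log\chi_\lambda(1)\le 2k\log n$. \emph{Input (ii):} a Roichman-type character-ratio bound --- there are absolute $q\in(0,1)$ and $b>0$ such that $|\chi_\lambda(\sigma)/\chi_\lambda(1)|\le(\max\{q,\lambda_1/n,\lambda_1'/n\})^{bs}$ for all $\lambda$ and $\sigma$. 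Granting these and writing $q'=\max\{q,\tfrac12\}$: if $\max\{\lambda_1,\lambda_1'\}\le q'n$ then (ii) gives $|\chi_\lambda(\sigma)|\le\chi_\lambda(1)\,q'^{\,bs}=\chi_\lambda(1)e^{-\Omega(s)}$, while $\chi_\lambda(1)^{-\theta}\ge e^{-\theta\cdot\frac12\log n!}\ge e^{-cs/2}$, so $|\chi_\lambda(\sigma)|\le\chi_\lambda(1)^{1-\theta}$ for $c$ small in terms of $q',b$; and if $\lambda_1=n-k>q'n$ (whence $k<n/2$ and $\max\{\lambda_1,\lambda_1'\}=\lambda_1$), then (ii) gives $|\chi_\lambda(\sigma)|\le\chi_\lambda(1)(1-k/n)^{bs}\le\chi_\lambda(1)e^{-bks/n}$ while (i) gives $\chi_\lambda(1)^{-\theta}\ge e^{-\theta\cdot 2k\log n}=e^{-2cks/n}$, so again $|\chi_\lambda(\sigma)|\le\chi_\lambda(1)^{1-\theta}$ once $c\le b/2$. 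The remaining case $\lambda_1'>q'n$ is reduced to the previous one by the sign twist, so the theorem holds with any $c\le\min\{b/2,\,c_0(q',b)\}$.

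The whole weight of the argument falls on Input (ii), the uniform Roichman bound. I would prove it via the Murnaghan--Nakayama rule together with the hook-length formula: removing a cycle of $\sigma$ expands $\chi_\lambda(\sigma)$ into a signed sum of characters of a smaller symmetric group indexed by the rim hooks of $\lambda$, and the hook-length formula controls how much the degree shrinks at each removal --- essentially by a factor $\max\{q,\lambda_1/n,\lambda_1'/n\}$ per removed cell, except when $\lambda$ is close to a rectangle, where the rim hook of a given size is nearly unique and there is no cancellation to exploit. Iterating over the cycles of $\sigma$, organised by dyadic length --- the $O(1)$ cycles of length $\ge\delta n$ peeled off one at a time, each block of short cycles of length $\asymp\ell$ treated in bulk by a Fomin--Lulov type bound $|\chi_\lambda(\cdot)|\le\chi_\lambda(1)^{1/\ell}e^{O(n/\ell)}$ --- and finally using $\log\chi_\lambda(1)\le\tfrac12\log n!$ to collect the gains, yields (ii).

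I expect Input (ii) to be the main obstacle, and within it the delicate case is a $\lambda$ lying close to (but not at) a rectangle, where the per-cycle contraction degenerates and must be recovered by a more careful accounting --- uniformly over all shapes and all cycle types of $\sigma$ at once. This is the technical heart of the M\"{u}ller--Schlage-Puchta bound. The $\log n$ in its exponent is genuinely needed, as Input~(i) already shows: for the standard character $\lambda=(n-1,1)$ one has $|\chi_\lambda(\sigma)|/\chi_\lambda(1)=(n-s-1)/(n-1)\asymp 1-s/n$, so $\chi_\lambda(1)^{1-c's/n}<|\chi_\lambda(\sigma)|$ for every fixed $c'>0$ once $n$ is large, whereas $\chi_\lambda(1)^{1-c's/(n\log n)}$ leaves exactly the slack needed.
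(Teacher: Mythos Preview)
The paper does not prove this theorem; it is quoted from M\"uller and Schlage-Puchta without proof, so there is no argument in the paper to compare against.

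Your reduction in the first paragraph is correct. Input (ii) is precisely Roichman's 1996 character bound (\emph{Invent.\ Math.}\ \textbf{125}), and combined with the elementary hook-length estimate Input (i) your case split does yield the stated inequality with $c$ depending only on Roichman's constants $q,b$. If one is willing to cite Roichman, that first paragraph is already a complete proof, and indeed this derivation is standard. Your closing remark on sharpness via the standard representation is also correct.

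Your second and third paragraphs are a sketch toward proving Input (ii) itself. The ingredients you list---Murnaghan--Nakayama recursion, hook-length control on the degree drop at each rim-hook removal, a Fomin--Lulov block estimate for cycles of comparable length---are exactly those appearing in Roichman's proof and later refinements, and you correctly flag the near-rectangular $\lambda$ as the delicate regime. But this is far from a proof: the Fomin--Lulov bound is stated only for permutations whose cycles all have the same length, and stitching together dyadic blocks of cycles while keeping the estimate uniform in $\lambda$ throughout the recursion is where the real work lies. As written, your proposal establishes the theorem only modulo a known result of comparable depth that it does not itself prove.
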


We will actually need the corresponding result for $A_n$. 
 \begin{thm}\label{thm:Schlage-Puchta A_n}
    There exists absolute constants $c,n_0>0$, such that the following holds. Let $n>n_0, s\le n$ and suppose that $\sigma\in A_n$ has support size $s$. Then $\chi(\sigma) \le \chi(1)^{1 - \frac{c s}{n \log n}}$  for every irreducible character $\chi$ of $A_n$.
\end{thm}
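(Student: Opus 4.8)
The goal is to deduce Theorem~\ref{thm:Schlage-Puchta A_n} (the character bound for $A_n$) from Theorem~\ref{thm:Schlage-Puchta} (the character bound for $S_n$). The plan is to exploit the standard relationship between irreducible characters of $S_n$ and those of $A_n$: upon restriction to $A_n$, an irreducible character $\chi_\lambda$ of $S_n$ either remains irreducible (when $\lambda \neq \lambda'$, i.e.\ $\lambda$ is not self-conjugate) or splits as $\chi_\lambda|_{A_n} = \psi^+ + \psi^-$ into two irreducible constituents of equal degree $\chi_\lambda(1)/2$ (when $\lambda = \lambda'$). Every irreducible character of $A_n$ arises this way, so it suffices to bound $\psi^\pm(\sigma)$ in the split case, since in the non-split case the bound is immediate from Theorem~\ref{thm:Schlage-Puchta} (note $\chi_\lambda(1)^{1 - cs/(n\log n)} \geq \psi(1)^{1 - cs/(n\log n)}$ when $\psi(1) = \chi_\lambda(1)$, so the same $c$ works).

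In the split case, the two constituents satisfy $\psi^+ + \psi^- = \chi_\lambda|_{A_n}$, and $\psi^+ - \psi^-$ is supported on the conjugacy classes of $A_n$ that are "split" (those consisting of permutations all of whose cycles have distinct odd lengths), where its values are given by the classical formula involving $\sqrt{\pm \prod \ell_i}$ for the cycle lengths $\ell_i$. So I would write $\psi^\pm(\sigma) = \tfrac12\chi_\lambda(\sigma) \pm \tfrac12 d(\sigma)$, where $d(\sigma) = 0$ unless $\sigma$ lies in a split class, in which case $|d(\sigma)| \le \sqrt{\prod_i \ell_i}$ with $\ell_1 < \ell_2 < \cdots$ the (distinct, odd) cycle lengths of $\sigma$. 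The first term is controlled by Theorem~\ref{thm:Schlage-Puchta}: $\tfrac12 |\chi_\lambda(\sigma)| \le \tfrac12 \chi_\lambda(1)^{1 - cs/(n\log n)} = \tfrac12 (2\psi(1))^{1 - cs/(n\log n)} \le \psi(1)^{1 - cs/(n\log n)}$ for $n$ large. The remaining task is to show the "error term" $\tfrac12\sqrt{\prod_i \ell_i}$ is also at most, say, $\psi(1)^{1 - c's/(n\log n)}$ for a suitable absolute constant $c'$ and $n$ large.

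For the error term I would argue as follows. If $\sigma$ has support size $s$ and lies in a split class with distinct odd cycle lengths $\ell_1 < \cdots < \ell_k$, then $\sum_i \ell_i \le n$ (non-fixed points use up $\sum_{i: \ell_i \ge 3}\ell_i$, but at most one $\ell_i = 1$), and by AM--GM applied to distinct positive integers, $\prod_i \ell_i$ is maximized, given the constraint, by taking consecutive integers, yielding $\prod_i \ell_i \le e^{O(\sqrt n \log n)}$; more crudely $\prod_i \ell_i \le k! \binom{n}{k}$-type bounds give $\log \prod_i \ell_i = O(\sqrt n \log n)$. On the other hand, $\psi(1) = \chi_\lambda(1)/2$ and a self-conjugate partition $\lambda$ with $\sigma$ having support $s$ in a split class forces $\chi_\lambda(1)$ to be reasonably large; in fact one always has, for $n$ large, $\chi_\lambda(1) \ge$ (some function growing faster than $e^{c\sqrt n \log n}$) unless $\lambda$ is close to a hook — and hooks are self-conjugate only for the staircase-type shapes, which have large dimension. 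More robustly, I would use the bound $\log \psi(1) \gtrsim s$ (valid for any irreducible character of degree $>1$, since a permutation with small support that is not fixed by the character lies outside a small-index subgroup — or cite a known lower bound $\chi(1) \ge$ roughly $\binom{n - O(1)}{\text{something}}$). Combining, $\log(\text{error}) = O(\sqrt n \log n)$ while $\log \psi(1) \gtrsim s$, so $\text{error} \le \psi(1)^{1 - c's/(n \log n)}$ holds whenever $s \gtrsim \sqrt n \log n$; and when $s \lesssim \sqrt n \log n$ the exponent $1 - c's/(n\log n)$ is close to $1$ and the bound reduces to $\text{error} \le \psi(1)$ up to constants, which is immediate since $\text{error} \le \sqrt{n!} $-ish is dwarfed by any $\psi(1)$... actually here one must be slightly careful and instead observe directly that $\tfrac12\sqrt{\prod \ell_i} \le \tfrac12\chi_\lambda(1)^{1/2}\cdot(\cdots)$ using that split classes have $|d(\sigma)|^2 \le \chi_\lambda(1)$ by column-orthogonality-type arguments, which already gives $\text{error} \le \psi(1)^{1/2+o(1)}$, more than enough.

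The main obstacle I anticipate is handling the split conjugacy classes cleanly: one must have the exact formula (or a sharp enough bound) for $|\psi^+(\sigma) - \psi^-(\sigma)|$ on split classes and then verify the error term is genuinely negligible compared to $\psi(1)^{1 - cs/(n\log n)}$ across the full range of $s$, including small $s$ where the exponent is close to $1$ and the inequality is tightest. The cleanest route is probably the inequality $|\psi^+(\sigma) - \psi^-(\sigma)|^2 \le \chi_\lambda(1)$ (which follows since the difference character, integrated against itself over split classes, is bounded by the norm of $\chi_\lambda|_{A_n}$), giving $\text{error} \le \tfrac12\chi_\lambda(1)^{1/2} = \tfrac{1}{\sqrt 2}\psi(1)^{1/2}$; then $\psi^\pm(\sigma) \le \tfrac12\chi_\lambda(1)^{1-cs/(n\log n)} + \tfrac{1}{\sqrt2}\psi(1)^{1/2} \le \psi(1)^{1 - c's/(n\log n)}$ once we also know $\psi(1) \to \infty$ and $s/(n\log n)$ is bounded, adjusting $c'$ and enlarging $n_0$ as needed. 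I would present the argument in this order: (1) reduce to split irreducibles via Clifford theory; (2) split $\psi^\pm = \tfrac12\chi_\lambda \pm \tfrac12 d$ and bound $\tfrac12|\chi_\lambda(\sigma)|$ by Theorem~\ref{thm:Schlage-Puchta}; (3) bound $|d(\sigma)| \le \chi_\lambda(1)^{1/2}$; (4) combine, choosing $c', n_0$.
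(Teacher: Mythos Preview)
Your plan coincides with the paper's proof in structure: reduce via Clifford theory to the case of a self-conjugate $\lambda$ (where $\chi_\lambda|_{A_n}=\psi^+ + \psi^-$) and a split conjugacy class ($\sigma^{S_n}\ne\sigma^{A_n}$), since on non-split classes $\psi^{\pm}(\sigma)=\tfrac12\chi_\lambda(\sigma)$ and Theorem~\ref{thm:Schlage-Puchta} applies directly. For the remaining case the paper simply invokes Larsen--Tiep~\cite[Theorem 2]{larsen2023squares}, which gives $|\psi^{\pm}(\sigma)|\le \sqrt{\chi(1)}$ for $n$ large; since $s\le n$ forces $\frac{cs}{n\log n}\le \frac{c}{\log n}<\tfrac12$ once $c\le \tfrac12$, this immediately yields $|\psi^{\pm}(\sigma)|\le \psi^{\pm}(1)^{1-cs/(n\log n)}$. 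Your route differs only in that you try to prove this $\sqrt{\chi(1)}$ bound by hand rather than cite it.

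There is one genuine gap in your sketch: your proposed justification for $|d(\sigma)|^2\le \chi_\lambda(1)$ --- ``the difference character, integrated against itself over split classes, is bounded by the norm of $\chi_\lambda|_{A_n}$'' --- only yields an $L^2$ bound $\sum_\sigma |d(\sigma)|^2 = 2|A_n|$, not a pointwise one. A correct direct argument uses column orthogonality in $A_n$ to get $|\psi^{\pm}(\sigma)|^2\le |C_{A_n}(\sigma)|=\prod_i \ell_i$, then combines the elementary estimate $\prod_i \ell_i\le e^{O(\sqrt{n}\log n)}$ (distinct odd parts summing to $n$) with the fact that self-conjugate $\lambda$ satisfy $\chi_\lambda(1)\ge 2^{\Omega(n)}$; this is exactly what the paper's alternative (commented-out) proof does. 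You should also note that a split class forces $\sigma$ to have at most one fixed point, hence $s\ge n-1$; this makes your worries about ``small $s$ where the exponent is close to $1$'' moot and renders the combination step (4) trivial.
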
 
\begin{proof}
    Let $\lambda\vdash n$ be a partition and $\lambda'$ be its conjugates obtained by reversing the roles of its columns and rows.  Recall that whenever $\lambda \ne \lambda'$ the character $\chi_{\lambda}$ restricts to an irreducible representation of $A_n$. For those characters the statement follows immediately from Theorem \ref{thm:Schlage-Puchta}. For the other characters with $\lambda = \lambda'$, the character $\chi_{\lambda}$ restricts to the sum of two irreducible representation $\chi_{\lambda}'$ and $\chi_{\lambda}'',$ where $\chi_{\lambda}''(\sigma) =  \chi_{\lambda}'(\tau \sigma \tau^{-1})$ for all $\tau \in S_n \setminus A_n.$ Now for each conjugacy class either $\sigma^{S_n} = \sigma^{A_n},$ and then the statement again follows immediately from Theorem \ref{thm:Schlage-Puchta} or $\sigma^{S_n}\ne \sigma^{A_n}$.
    Now for $\sigma$ with $\sigma^{S_n}\ne \sigma^{A_n}$ Larsen and Tiep~\cite[Theorem 2]{larsen2023squares} showed that $\chi_{\lambda'}(\sigma)\le \sqrt{\chi(1)},$ provided that $n$ is sufficiently large. This completes the proof, provided that $c\le 1/2.$
\end{proof}
\remove{
 \begin{thm}\label{thm:Schlage-Puchta A_n}
    There exists an absolute constant $c>0$, such that the following holds. Suppose that $\sigma\in A_n$ has support size $s$. Then $\chi(\sigma) \le \chi(1)^{1 - \frac{c s}{n \log n}}$ for every irreducible character $\chi$ of $A_n$.
\end{thm} 
\begin{proof}
    Let $\lambda\vdash n$ be a partition and $\lambda'$ be its conjugates obtained by reversing the roles of the columns and rows.  Recall that whenever $\lambda \ne \lambda'$, the character $\chi_{\lambda}$ restricts to an irreducible representation of $A_n$. For those characters the statement follows immediately from Theorem \ref{thm:Schlage-Puchta}. For the other characters with $\lambda = \lambda'$ $\chi_{\lambda}$ restricts to the sum of two irreducible representation $\chi_{\lambda}'$ and $\chi_{\lambda}'',$ where $\chi_{\lambda}''(\sigma) =  \chi_{\lambda}'(\tau \sigma \tau^{-1})$ for all $\tau \in S_n \setminus A_n.$ Whenever the centralizer of $\sigma$ contains an element outside of $A_n$ we have $\chi_{\lambda}'(\sigma) = \chi_{\lambda}''(\sigma)$ and therefore $\chi_{\lambda'}(\sigma) = \chi_{\lambda}(\sigma)/2$. This shows that for such $\sigma$ the statement of the theorem also follows from the statement of Theorem \ref{thm:Schlage-Puchta}. Finally, it is easy to see that the centralizer of $\sigma$ contains an element outside of $A_n$ unless $\sigma$ consists entirely of odd cyces of different lengths. Indeed, this follows from the fact that the centralizer of $\sigma$ is generated by its cycles and by the involutions that swap two of its cycles whenever they have the same length.
    
    Such permutations $\sigma$ necessarily have at most $2 \sqrt{n}$-cycles and by the orbit stabilizer theorem have density $\frac{|g^{S_n}|}{|S_n|} = \frac{|g^{A_n}|}{|A_n|} \ge n^{-\sqrt{n}}.$
    On the other hand, the dimensions of representations with $\lambda = \lambda'$ is at least $2^{\Omega(n)}$, see e.g. (\cite[Theorem 19]{ellis2011intersecting}). This shows that $\chi_{\lambda'}(\sigma)\le \sqrt{\chi(1)},$ provided that $n$ is sufficiently large. Note that we may indeed assume that $n$ is sufficetly large by decreasing $c$ if necessary. 
\end{proof}
}
The following corresponding character bound for finite simple groups of Lie type is due to 
Gluck~\cite{gluck1995sharper}.

\begin{thm}[\cite{gluck1995sharper}]\label{thm:Gluck}\label{lem:When s is small}
    There exists $c>0$, such that the following holds. Let $G$ be a finite simple group of Lie type over $\mathbb{F}_q$, $\chi$ an irreducible character and $g\in G\setminus\{1\}$. Then \[|\chi(g)|\le \chi(1)/q^{c}.\] 
\end{thm}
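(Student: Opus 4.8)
This is Gluck's theorem \cite{gluck1995sharper}; were one to reprove it rather than cite it, the plan would be as follows. The argument splits into an elementary part that disposes of characters of large degree and a Deligne--Lusztig part that carries the real content. (Note that the trivial character must be excluded, since for it $\chi(g)=\chi(1)$; throughout, $\chi$ is nonlinear.)

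For the elementary part, I would fix $1\ne g\in G$ and use the second orthogonality relation $\sum_{\chi\in\mathrm{Irr}(G)}|\chi(g)|^2=|C_G(g)|$, which gives $|\chi(g)|\le |C_G(g)|^{1/2}$ for every $\chi$. Hence the target bound $|\chi(g)|\le\chi(1)/q^{c}$ is automatic for all $\chi$ with $\chi(1)\ge q^{c}|C_G(g)|^{1/2}$, and only characters of degree small compared with $|C_G(g)|^{1/2}$ remain. Since $g$ can have a very large centralizer (e.g.\ a long root element), this is in fact where essentially all the work lies.

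For that hard part I would first reduce to unipotent $g$: writing the Jordan decomposition $g=su$, Lusztig's Jordan decomposition of characters rewrites $\chi(su)$ in terms of character values of $C_{\mathbf G}(s)^F$ --- again a group of Lie type over $\mathbb{F}_q$, of no larger rank --- at the unipotent element $u$, so it suffices to bound $|\psi(u)|/\psi(1)$ for $u\ne 1$ unipotent and $\psi$ an arbitrary irreducible character. Every such $\psi$ occurs in some $\pm R_{\mathbf T}^{\theta}$, and on unipotent elements $R_{\mathbf T}^{\theta}(u)=\pm Q_{\mathbf T}^{\mathbf G}(u)$ is a Green function independent of $\theta$; more generally $\psi$ restricted to the unipotent set is a bounded linear combination of Green functions. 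The crux is then a uniform estimate $|Q_{\mathbf T}^{\mathbf G}(u)|\le q^{-c}Q_{\mathbf T}^{\mathbf G}(1)$ for $u\ne 1$, reflecting that $Q_{\mathbf T}^{\mathbf G}$ is a polynomial in $q$ whose degree drops, relative to $Q_{\mathbf T}^{\mathbf G}(1)=[G:T]_{p'}$, by roughly the dimension of the associated Springer fibre --- i.e.\ by the codimension of the fixed space of $u$. Substituting this into the expansion of $\psi(u)$ and keeping track of the boundedly many tori and of the coefficients would yield $|\psi(u)|\le q^{-c}\psi(1)$.

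The main obstacle is precisely the passage from the averaged bound that orthogonality gives for free --- ``$|\chi(g)|^2$ is on average smaller than $\chi(1)^2$ by a factor $q^{-c}$'' --- to the pointwise bound for every $\chi$; there seems to be no route avoiding the structure theory of characters of groups of Lie type. Within the Deligne--Lusztig approach, the technical heart is making the Green-function estimate uniform in the rank: for bounded rank one can read it off explicit character tables, but for classical groups of unbounded rank one needs the combinatorial (Kostka-type) estimates on Green functions.
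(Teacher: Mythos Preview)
The paper does not prove this statement at all: it is stated as a cited result of Gluck \cite{gluck1995sharper} and used as a black box. Your proposal is consistent with this --- you correctly identify it as Gluck's theorem and treat it as an external input --- so in that sense there is nothing to compare.

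The proof sketch you add is genuinely extra relative to the paper, and as a roadmap to Gluck's argument it is broadly reasonable: the reduction via Jordan decomposition to unipotent elements and the use of Green functions is indeed the shape of the proof. One small caveat: your ``elementary part'' using $|\chi(g)|\le |C_G(g)|^{1/2}$ is correct but, as you yourself note, disposes of almost nothing when $g$ has large centralizer, so presenting the argument as a genuine two-part split is slightly misleading --- essentially all the content is in the Deligne--Lusztig estimate. Also, the parenthetical excluding the trivial character is unnecessary here, since for finite simple groups of Lie type the only linear character is trivial and for it the bound $1\le 1/q^c$ fails; but the theorem as stated allows $\chi$ to be trivial (the bound $|\chi(g)|\le\chi(1)/q^c$ then reads $1\le q^{-c}$, which is false), so strictly speaking the theorem as stated in the paper should also exclude $\chi=1$. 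This is a wrinkle in the paper's statement rather than in your proposal.
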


Larsen and Tiep~\cite{larsen2024uniform} obtained the following result for finite classical groups, which improves upon Theorem \ref{thm:Gluck} in the high rank regime.  
\begin{thm}[\cite{larsen2024uniform}]\label{thm:larsen--tiep}
    There exists $c>0$, such that the following holds. Let $G$ be a finite simple group of Lie type, $\chi$ be an irreducible character and $g\in G$. Then \[|\chi(g)|\le \chi(1)^{1- c\frac{\log |g^{|G|}|}{\log|G|}}.\] 
\end{thm}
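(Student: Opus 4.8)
The plan is to reduce to high-rank classical groups and then split the irreducible characters according to their Guralnick--Larsen--Tiep \emph{level}. Throughout I read $g^{|G|}$ in the statement as the conjugacy class $g^{G}$, and I write $\beta=\beta(g)=\log|g^{G}|/\log|G|=1-\log|C_G(g)|/\log|G|\in[0,1]$, so that the target is $|\chi(g)|\le\chi(1)^{1-c\beta}$.

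First, the reductions. If $g$ is central then $g=1$ (as $G$ is simple), $\beta=0$, and the inequality is the triviality $\chi(1)\le\chi(1)$; so assume $g\ne 1$. Passing to the simply connected cover $\widetilde G$ of $G$ preserves conjugacy-class sizes and the relevant character values and only perturbs $\log|\widetilde G|$ by a $1+o(1)$ factor, so I may work with $\widetilde G$ and, via standard Clifford theory, with the ambient matrix group $\mathrm{GL}_n(q)$, $\mathrm{GU}_n(q)$, $\mathrm{Sp}_{2n}(q)$, $\mathrm{Spin}^{\pm}_n(q)$, and so on. The exceptional groups, the Suzuki and Ree groups, and indeed every group of bounded rank are already covered by Gluck's bound (Theorem~\ref{thm:Gluck}): there $\log\chi(1)\asymp\log|G|\asymp\log q$ and $\log|g^{G}|=O(\log q)$, so $|\chi(g)|\le\chi(1)/q^{c_0}\le\chi(1)^{1-c\beta}$ once $c$ is chosen small, using $\chi(1)^{\beta}\le\chi(1)\le q^{O(1)}$. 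Hence I may assume $G$ acts on a natural module $V$ of dimension $n$ over $\mathbb{F}_q$ or $\mathbb{F}_{q^2}$, with $n\to\infty$.

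Next I would make $\beta$ geometric: $\log|G|\asymp n^{2}\log q$ and $\log|C_G(g)|=n^{2}\log q-\Theta\!\big(\min(s,n)\,n\log q\big)$, where $s=s(g)$ is the support of $g$ on $V$ (the codimension of the largest eigenspace of the semisimple part of $g$, adjusted by the non-scalar unipotent Jordan blocks). Thus $\beta\asymp\min(s,n)/n$ and it suffices to prove $|\chi(g)|\le\chi(1)^{1-c\,s(g)/n}$. Now split on the level $\mathfrak{l}(\chi)$. When $\mathfrak{l}(\chi)$ is below a small constant multiple of $n$, I would invoke the Guralnick--Larsen--Tiep level-based character bounds: such $\chi$ are constituents of bounded tensor powers of the degree-$(|V|-1)$ permutation-type character, whose values on $g$ are governed by the number of $g$-fixed vectors and tuples in $V$ — a quantity of order $q^{n-s(g)}$ — and this yields $|\chi(g)|\le\chi(1)^{1-c\,s(g)/n}$ directly, using $\mathfrak{l}(\chi)\ge 1$ and the lower bounds $\chi(1)\gtrsim q^{\mathfrak{l}(\chi)(n-\mathfrak{l}(\chi))}$. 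When $\mathfrak{l}(\chi)$ exceeds that threshold, $\chi(1)\ge q^{c'n^{2}}\ge|G|^{c'}$ is large, and I would instead expand $\chi$ into Deligne--Lusztig virtual characters $R_T^{\theta}$, bound each $|R_T^{\theta}(g)|$ via Green-function estimates and the count of $G$-conjugates of $T$ meeting the centralizer of the semisimple part of $g$, and combine with the lower bound on $\chi(1)$; column orthogonality, $|\chi(g)|\le|C_G(g)|^{1/2}=|G|^{(1-\beta)/2}$, disposes of the sub-case where $s(g)$ is close to $n$, while the refined Deligne--Lusztig estimate is what supplies the factor $s(g)/n$ for small support against large $\chi(1)$.

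The hard part is uniformity. For any \emph{fixed} rank, Gluck's theorem already suffices comfortably, so the whole difficulty is to carry out the Deligne--Lusztig analysis with an \emph{absolutely bounded loss per unit of rank}, uniformly in $q$ and over all classical types at once. That forces one through Lusztig's classification of unipotent characters, sharp bounds on Green functions and on their behaviour under Deligne--Lusztig and Harish-Chandra induction, and a careful count of maximal tori containing a prescribed semisimple element — i.e. the full multi-paper Guralnick--Larsen--Tiep program and its refinement by Larsen--Tiep. The low-level case, by contrast, is soft once the level formalism and its associated value bounds are in hand.
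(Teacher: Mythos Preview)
The paper does not prove this statement. Theorem~\ref{thm:larsen--tiep} is quoted from Larsen and Tiep~\cite{larsen2024uniform} as a black-box character bound in the Preliminaries section, and is used downstream (in Lemma~\ref{lem:character bounds for general functions} and Theorem~\ref{thm: Larsen--Tiep corollary}) without any attempt at reproving it. So there is no ``paper's own proof'' to compare your proposal against.

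As for your sketch itself: the outline --- reduce bounded rank via Gluck, pass to the quasisimple/ambient classical group, split characters by Guralnick--Larsen--Tiep level, handle low level by permutation-character/fixed-vector counts and high level via Deligne--Lusztig expansions and Green-function estimates --- is the right shape and matches the architecture of the Larsen--Tiep work you are citing. But it is a plan, not a proof: the uniform-in-rank estimates on Green functions, the torus-counting arguments, and the level lower bounds on $\chi(1)$ each require the full machinery of \cite{guralnick2020character,guralnick2024character,larsen2024uniform}, which you acknowledge. For the purposes of \emph{this} paper, nothing more than the citation is expected.
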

The works of Larsen and Tiep build upon the works of Guralnick, Larsen, and Tiep~\cite{guralnick2020character, guralnick2024character}, which make heavy use of Deligne--Lustig theory.

\subsection{Facts about representations and conjugacy classes}

We say that $G$ is a \emph{classical finite simple group} $Cl_n(\mathbb{F}_q)$ to mean that $G$ is the finite simple groups of special linear isometries of a nondegenerate sesquilinear form, defined over a vector space of dimension $n$ over the field $\mathbb{F}_q$, and modded out by its center (see the book of Kelidman and Liebeck \cite{kleidman1990subgroup} for the definitions, or alternatively have $SL_n(\mathbb{F}_q)$ in mind, while taking a leap of faith properties in their properties that we recollect in this section). 
We require the following well known result concerning the number of conjugacy classes for classical finite simple groups (see e.g. \cite[Theorem 7.5.1]{de2024conjugacy}) 
\begin{fact}\label{fact:number of conjugacy classes}
    There exists $C>0$, such that the following holds. Let $G$ be a classical finite simple group $Cl_{n}(\mathbb{F}_q).$ Then $G$ has $\le q^{Cn}$ congugacy classes. 
\end{fact}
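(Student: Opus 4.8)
The plan is to bound the number $k(H)$ of conjugacy classes first for the ambient classical \emph{matrix} groups, and then to descend to the simple group via two elementary reductions: (i) if $N\trianglelefteq G$ then $k(G/N)\le k(G)$, since the preimages of distinct classes of $G/N$ are disjoint; and (ii) if $H\le G$ then $k(H)\le [G:H]\cdot k(G)$, since a $G$-class $C$ meeting $H$ breaks into at most $[G:H]$ many $H$-classes (fixing $x\in C\cap H$, the $H$-classes inside $C$ inject into the double cosets $H\backslash G/C_G(x)$, of which there are at most $[G:H]$). Every classical finite simple group $Cl_n(\mathbb{F}_q)$ has the form $\tilde G_1/Z$, where $Z\le Z(\tilde G_1)$, $\tilde G_1\le\tilde G$ with $[\tilde G:\tilde G_1]\le q+1$, and $\tilde G$ is one of $\mathrm{GL}_n(\mathbb{F}_q)$, $\mathrm{GU}_n(\mathbb{F}_q)$, $\mathrm{Sp}_n(\mathbb{F}_q)$ ($n$ even), or $\mathrm{GO}_n^{\pm}(\mathbb{F}_q)$ --- e.g.\ $\mathrm{PSL}_n(\mathbb{F}_q)=\mathrm{SL}_n(\mathbb{F}_q)/Z$ with $[\mathrm{GL}_n:\mathrm{SL}_n]=q-1$, and similarly for the unitary, symplectic and orthogonal types, using $[\mathrm{GO}_n^{\pm}:\Omega_n^{\pm}]\le 4\le q+1$ in the last case. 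By (i) and (ii), $k(Cl_n(\mathbb{F}_q))\le (q+1)\,k(\tilde G)$, so it suffices to prove $k(\tilde G)\le q^{Cn}$ for these matrix groups.

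For $\tilde G=\mathrm{GL}_n(\mathbb{F}_q)$ this follows at once from rational canonical form: a conjugacy class is a similarity class of invertible matrices, hence is determined by its sequence of invariant factors $f_1\mid\cdots\mid f_k$, a finite sequence of monic polynomials over $\mathbb{F}_q$ of positive degree with $\sum_i\deg f_i=n$. Dropping the divisibility and invertibility constraints only inflates the count, and the number of \emph{all} finite sequences of monic polynomials of positive degree with degrees summing to $n$ is
\[
\sum_{k\ge 1}\ \sum_{\substack{d_1,\dots,d_k\ge 1\\ d_1+\cdots+d_k=n}}\! q^{\,d_1+\cdots+d_k}\;=\;q^{\,n}\sum_{k\ge 1}\binom{n-1}{k-1}\;=\;2^{\,n-1}q^{\,n}\;\le\; q^{\,2n}
\]
for $q\ge 2$. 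In particular $k(\mathrm{GL}_n(\mathbb{F}_Q))\le Q^{2n}$ for every prime power $Q\ge 2$.

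For the remaining matrix groups I would reduce to the linear case via the embedding $\tilde G\le\mathrm{GL}_n(\mathbb{F}_Q)$, where $Q=q^2$ for $\tilde G=\mathrm{GU}_n(\mathbb{F}_q)$ and $Q=q$ for $\tilde G\in\{\mathrm{Sp}_n(\mathbb{F}_q),\mathrm{GO}_n^{\pm}(\mathbb{F}_q)\}$. The key input is that a single $\mathrm{GL}_n(\mathbb{F}_Q)$-conjugacy class splits into at most $2^{O(n)}$ conjugacy classes of $\tilde G$: this is Wall's analysis of conjugacy in the classical groups, where within one similarity class the refinement into classical-group classes is governed by a bounded number of discrete form-type choices, the number of options being a power of $2$ at most $2^{n}$ (and this persists in characteristic $2$, where the extra unipotent classes still only refine a partition-indexed $\mathrm{GL}$-class into at most $2^{O(n)}$ pieces). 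Consequently, by the previous paragraph,
\[
k(\tilde G)\;\le\; 2^{O(n)}\cdot k(\mathrm{GL}_n(\mathbb{F}_Q))\;\le\; 2^{O(n)}\cdot Q^{2n}\;\le\; q^{O(n)} .
\]
(Alternatively one can run the Jordan decomposition directly: $k(\tilde G)=\sum_{[s]\ \mathrm{ss}} k_{\mathrm{unip}}(C_{\tilde G}(s))$, where the number of semisimple classes is $q^{O(n)}$ and every $C_{\tilde G}(s)$ is a product of general linear and classical groups over extensions of $\mathbb{F}_q$ of weighted total rank $\le n$, in which the number of unipotent classes is again $q^{O(n)}$.)

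Combining the three cases gives $k(Cl_n(\mathbb{F}_q))\le (q+1)\cdot q^{O(n)}\le q^{Cn}$ for an absolute constant $C$, as claimed. The only genuinely nontrivial ingredient is the splitting bound used in the third paragraph together with the structural facts it rests on --- the description of conjugacy in $\mathrm{GU}$, $\mathrm{Sp}$, $\mathrm{O}$ relative to $\mathrm{GL}$, and (for the Jordan-decomposition variant) the shape of centralizers of semisimple elements and the characteristic-independent count of unipotent classes. These are classical, essentially due to Wall, and are exactly what underlies the reference cited in the statement; the main care needed is to make the exponents uniform across all four families and over fields of both parities. Everything else is the elementary counting above.
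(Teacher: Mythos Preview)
The paper does not actually prove this fact; it simply records it as well known and points to \cite[Theorem~7.5.1]{de2024conjugacy}. Your argument is a correct, essentially self-contained proof: the reductions $k(G/N)\le k(G)$ and $k(H)\le[G:H]\,k(G)$ are standard, the rational-canonical-form count for $\mathrm{GL}_n$ is clean, and your appeal to Wall's classification for the $2^{O(n)}$ splitting bound in the unitary, symplectic and orthogonal cases is exactly the right nontrivial input (and you flag it as such). One cosmetic slip: the inequality $[\mathrm{GO}_n^{\pm}:\Omega_n^{\pm}]\le 4\le q+1$ is literally false at $q=2$, but since $4\le q^{2}$ in all cases this does not affect the final $q^{O(n)}$ conclusion.
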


We shall also require the following classical result due to Landazuri and Seitz~\cite{landazuri1974minimal}.  
\begin{fact}\label{fact:Quasirnadomness of simple classical groups}
    There exists $c_1>0$, such that if $G$ is a classical finite simple group $Cl_n(\mathbb{F}_q)$, then every nontrivial representation of $G$ has dimension $\ge q^{c_1 n}.$
\end{fact}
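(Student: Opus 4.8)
This is the classical Landazuri--Seitz bound, and for the paper one simply invokes \cite{landazuri1974minimal}; since only the exponential-in-$n$ form with a non-sharp constant is needed, let me outline a short self-contained argument. The plan is to reduce to the quasisimple cover and then restrict a hypothetical small representation to the unipotent radical of a well-chosen maximal parabolic, applying Clifford theory. A nontrivial complex irreducible representation $\rho$ of the simple group $Cl_n(\mathbb{F}_q)$ inflates to a nontrivial irreducible of the same dimension of the corresponding quasisimple group $\widetilde G$ (one of $\mathrm{SL}_n$, $\mathrm{SU}_n$, $\mathrm{Sp}_n$, $\mathrm{Spin}_n^\pm$ over $\mathbb{F}_q$), so it suffices to bound $\dim\rho$ from below for $\widetilde G$; the argument below works verbatim in any characteristic coprime to $q$. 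Choose a maximal parabolic $P = U\rtimes L$ of $\widetilde G$ stabilizing a singular line. Then $U$ is a $p$-group which is either elementary abelian of order $q^{n-1}$ (type $\mathrm{SL}$), or of Heisenberg type with centre $Z$ of order $q$ or $q^2$ and $U/Z$ elementary abelian of $\mathbb{F}_q$-rank commensurate with $n$ (the remaining types), and the Levi $L$ contains a classical group of rank commensurate with $n$ acting naturally on $U$ (respectively $U/Z$).

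Now restrict $\rho$ to $U$. Since $U$ is a $p$-group and $\mathrm{char}$ does not divide $q$, $\rho|_U$ is a direct sum of linear characters, and this multiset of characters is stable under the conjugation action of $L$. First, if $\rho|_U$ is trivial then $\rho$ is trivial on the normal closure of $U$ in $\widetilde G$; but $U\neq 1$ is non-central, so, $\widetilde G$ being quasisimple, this normal closure is all of $\widetilde G$, and $\rho$ would be trivial --- a contradiction. Hence $\rho|_U$ contains a nontrivial character $\psi$. In the abelian case $L$ acts transitively on the $q^{n-1}-1$ nonzero characters of $U$, so all of them occur in $\rho|_U$ and $\dim\rho \ge q^{n-1}-1$. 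In the Heisenberg cases: if $\psi$ is trivial on $Z$ it descends to a nonzero character of $U/Z$, and $L$ acts on the nonzero characters of $U/Z$ with boundedly many orbits, each of size at least $q^{c'n}$ for an absolute $c'>0$ (here the scalar factor $\mathrm{GL}_1$ inside $L$ is used to collapse the ``norm'' orbits), whence $\dim\rho\ge q^{c'n}$; and if $\psi$ is nontrivial on $Z$, then by Stone--von Neumann every irreducible of $U$ lying over $\psi|_Z$ has dimension $q^{\mathrm{rk}(U/Z)/2}\ge q^{c'n}$, so again $\dim\rho\ge q^{c'n}$. In every case $\dim\rho\ge q^{c_1 n}$ for a suitable absolute $c_1>0$.

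The only genuine work is uniformity: one must check that each classical family --- including the twisted forms $\mathrm{SU}_n$ and $\mathrm{Spin}_n^\pm$, where $P,U,L$ are defined over $\mathbb{F}_{q^2}$ or are only quasi-split --- really has a parabolic of the stated shape with $L$ acting on the relevant character group with boundedly many orbits, and one must separately dispose of the finitely many small $(n,q)$ (such as $n=2$ or coincidences like $\Omega_3\cong\mathrm{PSL}_2$) where the singular-line parabolic degenerates. Because we only aim for $q^{c_1 n}$ with generous slack, the orbit count and the factor-$2$ loss from $U/Z$ having rank only commensurate with $n$ are harmless, and the small cases are absorbed into $c_1$; this is the main (but routine) obstacle. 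Pinning the minimal degree down sharply --- $\asymp q^{n-1}$ for $\mathrm{PSL}_n$, $\asymp q^{n/2}$ for $\mathrm{PSp}_n$, and so on --- needs the more delicate case analysis of Landazuri--Seitz and its subsequent refinements, which for our purposes we take as a black box.
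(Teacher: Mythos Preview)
The paper does not prove this fact; it is simply stated as a classical result due to Landazuri and Seitz, with a citation to \cite{landazuri1974minimal} and no further argument. You correctly identify this, and your sketch goes beyond what the paper does by outlining the standard proof via Clifford theory on the unipotent radical of a line-stabilizer parabolic.

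One minor slip to flag: you assert that ``$\rho|_U$ is a direct sum of linear characters'' because $U$ is a $p$-group in coprime characteristic. This is only true when $U$ is abelian; in the Heisenberg case $\rho|_U$ decomposes into irreducibles of $U$, and those with nontrivial central character have dimension $q^{\mathrm{rk}(U/Z)/2}>1$. Your subsequent case split (constituents trivial on $Z$ versus not) is exactly the right dichotomy and already handles this, so the argument is correct once that sentence is amended to ``$\rho|_U$ decomposes as a direct sum of irreducibles of $U$''. Otherwise the sketch is sound, and your caveats about uniformity across twisted types and small $(n,q)$ are the right places to be careful.
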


In particular, the special case of representations induced from the trivial representation of Fact~\ref{fact:Quasirnadomness of simple classical groups} implies that there exists an absolute constant $c>0$, such that every subgroup of a classical finite simple group has index $\ge q^{cn}.$ The centralizer theorem therefore implies the following well known fact. 

\begin{fact}\label{fact:sizes of conjugacy classes}
There exists $c>0$, such that every conjugacy class of a classical finite simple group $Cl_{n}(\mathbb{F}_q)$ has size $\ge q^{cn}.$ 
\end{fact}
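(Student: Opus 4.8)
The plan is to obtain this from the subgroup-index bound for classical finite simple groups that is already isolated in the sentence preceding the statement; that bound is in turn a direct consequence of the Landazuri--Seitz estimate (Fact~\ref{fact:Quasirnadomness of simple classical groups}) applied to permutation representations.

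First I would make the index bound precise. Let $G = Cl_n(\mathbb{F}_q)$ and let $H < G$ be a proper subgroup. The permutation representation of $G$ on the cosets $G/H$ --- equivalently, the representation induced from the trivial representation of $H$ --- decomposes as $\mathbb{C}[G/H] \cong \mathbb{C} \oplus V$ with $\dim V = [G:H] - 1$. Since $G$ is nonabelian simple and $H \neq G$, the kernel of the coset action is a normal subgroup of $G$ contained in $H$, hence trivial, so the action is faithful; as $\mathbb{C}[G/H]$ is semisimple over $\mathbb{C}$, it is therefore not a sum of trivial modules, and so $V$ contains a nontrivial irreducible constituent $W$. By Fact~\ref{fact:Quasirnadomness of simple classical groups}, $\dim W \ge q^{c_1 n}$, whence $[G:H] - 1 = \dim V \ge \dim W \ge q^{c_1 n}$, i.e. $[G:H] \ge q^{c_1 n}$.

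Next I would apply the orbit--stabilizer theorem. Let $g \in G$ with $g \neq 1$. As $G$ is nonabelian simple we have $Z(G) = \{1\}$, so $g \notin Z(G)$ and the centralizer $C_G(g)$ is a proper subgroup of $G$. Then $|g^G| = [G : C_G(g)] \ge q^{c_1 n}$ by the previous step, and taking $c = c_1$ gives the claim --- understood as a statement about nontrivial conjugacy classes, since the class $\{1\}$ is the sole exception. I do not expect any genuine obstacle: the result is classical, and the only two points worth a word are the faithfulness of the coset action (immediate from simplicity of $G$) and that a faithful $\mathbb{C}G$-permutation module of a nontrivial group has a nontrivial constituent (immediate from Maschke's theorem). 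The finitely many small pairs $(n,q)$ for which $Cl_n(\mathbb{F}_q)$ fails to be simple, or for which the estimate is vacuous, are absorbed into the constant exactly as in Fact~\ref{fact:Quasirnadomness of simple classical groups}.
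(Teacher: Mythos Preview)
Your proof is correct and follows exactly the approach indicated in the paper: use the Landazuri--Seitz bound (Fact~\ref{fact:Quasirnadomness of simple classical groups}) on a nontrivial constituent of the permutation module $\mathbb{C}[G/H]$ to get the subgroup-index bound, then apply the orbit--stabilizer (centralizer) theorem. Your write-up simply fills in the details that the paper leaves implicit, including the remark that the identity class is the trivial exception.
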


We make use of the following result due to Larsen, Tiep, and Taylor that upper bounds the number of representations of a given order of magnitude.  
\begin{thm}[{\cite[Theorem 9.1]{larsen2023character}}]\label{thm: Counting low dimensional representations}
    There exists an absolute constant $C>0,$ such that for all $D>0$ and every classical finite simple group $G=Cl_n(\mathbb{F}_q)$ the number of representations of $G$ of dimension at most $D$ is at most $D^{C/n}.$ 
\end{thm}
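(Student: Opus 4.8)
The plan is to treat the bound regime by regime in $D$, with the only substantive case lying between the Landazuri--Seitz threshold and $|G|$. If $D < q^{c_1 n}$, with $c_1$ as in Fact~\ref{fact:Quasirnadomness of simple classical groups}, the only irreducible representation of dimension $\le D$ is the trivial one, so the count is at most $1 \le D^{C/n}$ (assuming $D\ge 1$, the claim being vacuous otherwise). If $D \ge |G|$, every representation is counted, and by Fact~\ref{fact:number of conjugacy classes} there are at most $q^{C_0 n}$ of them; since $|G| \asymp q^{\Theta(n^2)}$ we have $\log D \gtrsim n^2 \log q$, so $q^{C_0 n} \le D^{C/n}$ once $C$ is large enough. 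So I would assume $q^{c_1 n} \le D < |G|$ and set $\ell := \lceil \log D / (n \log q)\rceil$, so that $c_1 \le \ell \lesssim n$ and $q^{\ell n} \asymp D$; the desired bound $D^{C/n}$ then reads $q^{O(\ell)}$, and the task becomes to show that the number of $\chi \in \mathrm{Irr}(G)$ with $\chi(1) \le q^{\ell n}$ is $q^{O(\ell)}$ with an absolute implied constant.

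The structural input is the Deligne--Lusztig description of $\mathrm{Irr}(G)$ together with the theory of character levels for finite classical groups (Tiep--Zalesskii, Guralnick--Larsen--Tiep, and the works cited around Theorem~\ref{thm:larsen--tiep}). Each $\chi$ lies in a Lusztig series $\mathcal{E}(G,s)$ for a semisimple class $s$ of the dual group $G^*$, with $\chi(1) = [G^*:C_{G^*}(s)]_{p'}\cdot\psi(1)$ for a unipotent character $\psi$ of $C_{G^*}(s)$; moreover $[G^*:C_{G^*}(s)]_{p'} \asymp q^{\dim G - \dim C_{G^*}(s)}$, while $\psi(1)$ is $\asymp q^{\mathrm{lev}(\psi)\cdot n}$ up to a lower-order $q^{O(\mathrm{lev}(\psi)^2)}$ factor, where $\mathrm{lev}(\psi)$ records how far the partition data of $\psi$ lie from a single row. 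Hence $\chi(1)\le q^{\ell n}$ forces both $\dim G - \dim C_{G^*}(s) = O(\ell n)$ and $\mathrm{lev}(\psi) = O(\ell)$.

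I would then count the pairs $(s,\psi)$. Because $\dim G - \dim C_{G^*}(s)$ is a \emph{quadratic-type} deficiency in the eigenvalue multiplicities of $s$, the bound $\dim G - \dim C_{G^*}(s) = O(\ell n)$ forces $s$ to be almost scalar: $C_{G^*}(s)$ contains a classical group of corank $O(\ell)$, and $s$ is determined by a dominant eigenvalue datum, chosen in $O(q)$ ways, together with a correction that is a semisimple class of a rank-$O(\ell)$ classical group, of which there are at most $q^{O(\ell)}$ by Fact~\ref{fact:number of conjugacy classes} applied in rank $O(\ell)$; so there are $q^{O(\ell)}$ admissible $s$. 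For each such $s$, the unipotent characters $\psi$ of $C_{G^*}(s)$ with $\mathrm{lev}(\psi) = O(\ell)$ number at most a partition-type factor $e^{O(\sqrt\ell)}$ for the large block times a crude $q^{O(\ell)}$ for the rank-$O(\ell)$ block, hence $q^{O(\ell)}$. Multiplying, the number of $\chi$ with $\chi(1)\le q^{\ell n}$ is $q^{O(\ell)} = D^{O(1/n)}$, as required. Equivalently, one may phrase the level count as the representation-zeta estimate $\sum_{\chi}\chi(1)^{-t} \le 1 + \sum_{\ell\ge1} q^{O(\ell)} q^{-t\ell n} = O(1)$ for $t \ge C_0/n$, and conclude $\#\{\chi:\chi(1)\le D\}\le D^{t}\sum_{\chi}\chi(1)^{-t} = O(D^{C_0/n})$.

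The main obstacle is exactly the structural input of the second paragraph: showing, uniformly over the classical types $\mathrm{SL}$, $\mathrm{SU}$, $\mathrm{Sp}$, $\mathrm{SO}^{\pm}$ and over all $q$, that a degree bound $\chi(1)\le q^{\ell n}$ simultaneously forces the semisimple part $s$ to have centralizer of corank $O(\ell)$ and the attached unipotent character to have level $O(\ell)$. This is the heart of the low-dimensional-representation / character-level program: it requires controlling the $q$-powers produced by Deligne--Lusztig induction at every level and matching them against the successive Landazuri--Seitz-type gaps of size $q^{\Theta(\ell n)}$. Granting that input, the remaining ingredients — passing to the dual group, the Frobenius-orbit bookkeeping for semisimple classes of small rank, absorbing small $n$ and small $q$ into the constant $C$, and the two boundary regimes — are routine.
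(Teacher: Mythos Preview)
The paper does not prove this statement at all: Theorem~\ref{thm: Counting low dimensional representations} is quoted verbatim as \cite[Theorem 9.1]{larsen2023character} in the preliminaries section and is used as a black box. There is therefore no ``paper's own proof'' to compare your sketch against.

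That said, your outline is broadly the right shape for how the cited result is actually established: partition $\mathrm{Irr}(G)$ into Lusztig series $\mathcal{E}(G,s)$, use the Jordan decomposition of characters to write $\chi(1)=[G^*:C_{G^*}(s)]_{p'}\cdot\psi(1)$, and then argue that a bound $\chi(1)\le q^{\ell n}$ constrains both the corank of $C_{G^*}(s)$ and the level of the unipotent constituent $\psi$. You are also honest that the genuinely hard part is the uniform level--degree correspondence across all classical types and all $q$; that is indeed the content of the Guralnick--Larsen--Tiep and Larsen--Taylor--Tiep papers, and it is not something one can fill in on the spot. So your proposal is best read not as a proof but as a correct roadmap to the literature, which is all that the present paper requires. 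If your aim was to supply a self-contained argument for the manuscript, you should instead simply retain the citation, as the authors do.
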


We also need the following theorem of Liebeck and Shalev \cite{liebeck2004fuchsian}
\begin{lem}[{\cite[Theorem 1.1 and  Corollary 2.7]{liebeck2004fuchsian}}]\label{lem:Liebeck-shalev zeta}
    For every $\epsilon>0$ there exists $n_0>0$, such that if $n>n_0$ and $D>1$, then $A_n$ has at most $(1+\epsilon)D^{1+\epsilon}$ representations whose dimension is at most $D.$ 
\end{lem}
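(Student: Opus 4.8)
The plan is to reduce the statement to the asymptotic $\zeta_{A_n}(s)\to1$ as $n\to\infty$ for every fixed $s>0$, where $\zeta_G(s):=\sum_{\chi\in\mathrm{Irr}(G)}\chi(1)^{-s}$ is the \emph{representation zeta function} of $G$; this is essentially how the lemma is proved in \cite{liebeck2004fuchsian}. The reduction is immediate: for $s>0$ and $D>1$, since $D/\chi(1)\ge1$ whenever $\chi(1)\le D$, we have
\[
\#\set{\chi\in\mathrm{Irr}(A_n):\chi(1)\le D}\ \le\ \sum_{\chi(1)\le D}\parenth{\frac{D}{\chi(1)}}^{s}\ \le\ D^{s}\,\zeta_{A_n}(s),
\]
with the same value $\zeta_{A_n}(s)$ for every $D$. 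So given $\epsilon>0$, if we take $s=\epsilon$ and choose $n_0$ so large that $\zeta_{A_n}(\epsilon)\le1+\epsilon$ for all $n>n_0$, then for such $n$ the number of irreducible representations of $A_n$ of dimension at most $D$ is at most $(1+\epsilon)D^{\epsilon}\le(1+\epsilon)D^{1+\epsilon}$.

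Next I would estimate $\zeta_{A_n}(s)$ by passing to $S_n$. The irreducible characters of $S_n$ are the $\chi_\lambda$ for $\lambda\vdash n$; when $\lambda\ne\lambda'$ the characters $\chi_\lambda$ and $\chi_{\lambda'}$ restrict to the same irreducible character of $A_n$ (of degree $\chi_\lambda(1)$), while when $\lambda=\lambda'$ the restriction splits into two irreducible characters of degree $\chi_\lambda(1)/2$, and the pair $(n),(1^n)$ accounts for the trivial character of $A_n$. Hence
\[
\zeta_{A_n}(s)\ \le\ 1\ +\ \sum_{\substack{\lambda\vdash n\\ \lambda\notin\set{(n),(1^n)}}}\chi_\lambda(1)^{-s}\ +\ 2^{1+s}\!\!\sum_{\substack{\lambda\vdash n\\ \lambda=\lambda'}}\chi_\lambda(1)^{-s},
\]
and it suffices to show that both sums tend to $0$.

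The key input is a degree lower bound in terms of the \emph{depth} $d(\lambda):=n-\max(\lambda_1,\lambda_1')$ of a partition $\lambda\notin\set{(n),(1^n)}$ (so $d(\lambda)\ge1$). I would use the classical hook-length estimates (Rasala; see also \cite{liebeck2004fuchsian}) in the shape: there is an absolute constant $\delta\in(0,1)$ with $\chi_\lambda(1)\ge\parenth{n/d(\lambda)}^{\delta\,d(\lambda)}$ whenever $d(\lambda)\le n/2$, and $\chi_\lambda(1)\ge2^{\delta n}$ whenever $d(\lambda)>n/2$. (For ``thin'' $\lambda$, i.e.\ $\max(\lambda_1,\lambda_1')$ close to $n$, the first bound comes from the hooks sitting in the long first row or column; for ``fat'' $\lambda$ one bounds every hook by $\lambda_1+\lambda_1'-1$ and uses $n!>(n/e)^n$.) Since a partition of depth $d$ is determined, up to conjugation, by the partition of the $d$ boxes lying outside its longest row, there are at most $2p(d)\le2e^{C\sqrt d}$ of them, and the first sum above is at most
\[
\sum_{d=1}^{\lfloor n/2\rfloor}2e^{C\sqrt d}\parenth{\frac{d}{n}}^{\delta s d}\ +\ 2n\,e^{C\sqrt n}\,2^{-\delta s n}.
\]
Splitting the remaining sum at $d=\lfloor\log n\rfloor$ makes it go to $0$: for $d\le\log n$ each term is at most $2e^{C\sqrt{\log n}}(\log n/n)^{\delta s}$ and there are at most $\log n$ of them, while for $\log n<d\le n/2$ one has $(d/n)^{\delta s d}\le2^{-\delta s d}$, so $e^{C\sqrt d}2^{-\delta s d}$ is eventually geometrically summable. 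The self-conjugate sum is even easier: a self-conjugate $\lambda$ has $\lambda_1=\lambda_1'$, so its longest row and column together occupy $2\lambda_1-1\le n$ cells and thus $d(\lambda)\ge(n-1)/2$; the degree bound gives $\chi_\lambda(1)\ge2^{\delta'n}$, and there are at most $p(n)\le e^{C\sqrt n}$ self-conjugate partitions of $n$, so $2^{1+s}\sum_{\lambda=\lambda'}\chi_\lambda(1)^{-s}\le2^{1+s}e^{C\sqrt n}2^{-\delta's n}\to0$. (One could also simply quote the exponential lower bound on the degrees of self-conjugate $\chi_\lambda$ used in the proof of Theorem~\ref{thm:Schlage-Puchta A_n}.) This establishes $\zeta_{A_n}(s)\to1$ and hence the lemma.

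The step I expect to be the main obstacle is the degree estimate of the third paragraph: one needs a lower bound for $\chi_\lambda(1)$ that is uniform over the whole range $1\le d(\lambda)\le n$ --- at least polynomially large in $n$ when $d(\lambda)$ is bounded, and (super)exponentially large when $d(\lambda)$ is comparable to $n$ --- and strong enough to dominate the $e^{O(\sqrt d)}$ count of depth-$d$ partitions for every fixed $s>0$. This combinatorial analysis via the hook-length formula is the real content of \cite{liebeck2004fuchsian}; everything around it is bookkeeping with the growth rate of the partition function.
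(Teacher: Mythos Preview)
The paper does not give its own proof of this lemma; it is simply quoted from \cite{liebeck2004fuchsian} and used as a black box. Your sketch correctly reconstructs the argument of that reference: the reduction to $\zeta_{A_n}(s)\to 1$, the passage from $A_n$ to $S_n$ via the branching rule, and the degree-versus-depth estimates coming from the hook-length formula are exactly the ingredients of Liebeck--Shalev's proof, and your identification of the uniform degree lower bound as the substantive step is accurate.
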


\subsection{The generalized Frobenius formula of \cite{GLPS}}

Before stating the generalized Frobenius formula of Gill, Lifshitz, Pyber, and Szab\'{o}, we introduce some notations and recall some basic facts from the representation theory of finite groups. These facts can be found e.g. in \cite{faraut2008analysis}.
\subsubsection*{Functions on groups}
We shall write $x\sim A$ to denote that $x$ is chosen uniformly out of $A$. For a function on a finite group \(f\in \mathbb{C}[G]\) we therfore write  $\mathbb{E}_{x\sim A}f$ for $\frac{1}{|A|}\sum_{x\in A}f(x)$. We also write $\mathbb{E}[f]$ as a shorthand for $\mathbb{E}_{x\sim G}[f(x)]$. We write $f^{\sigma}$ for the outcome of the right action of $\sigma \in G$ on $f\in \mathbb{C}[G]$, i.e. 
  $f^{\sigma}(\tau ) = f(\tau\sigma)$. The space of functions on $G$ is equipped with the normalized inner product  \[\langle f, g\rangle = \frac{1}{|G|}\sum_{\sigma}f(\sigma)\overline{g(\sigma)}.\]
  We write $\|f\|_p$ for the $L_p$-norm of $f$ given by $\|f\|_p^p = \mathbb{E}[|f|^p]$ and write $\|f\|_{\infty}$ for the maximal value of $|f|$. 

  \subsubsection*{Probaility measures and density functions}
  Every probability measure $\mu$ on a finite group corresponds to a function $f\colon G\to [0,\infty)$ with $\|f\|_1 = 1$ given by $f(\sigma) = |G|\mu(\sigma).$ The function $f$ is the Radon--Nikodym derivative of $\mu$ with respect to the uniform measure. We call $f\colon G\to [0,\infty)$  with $\|f\|_1 = 1$ a \emph{density function}. Density functions are in one to one correspondence with probability measures. If $f$ is the density function corresponding to $\mu$, then we have 
  \[
  \langle g,f \rangle = \mathbb{E}_{x\sim \mu}[g(x)]. 
  \]

\subsubsection*{Convolutions}
The \emph{convolution of functions} $f$ and $g$ is given by 
\[f*g(x) =\mathbb{E}_{y\sim G}[f(y)g(y^{-1}x)].\]
The \emph{convolution of probability measures} $\mu, \nu$ is the probability measure $\mu *\nu$ corresponding to choosing $x\sim \mu, y\sim \nu$ independently and outputting $xy$. It is easy to see that the density function corresponding to $\mu*\nu$ is the convolution of the correponding density functions. 

For a function $f\colon G\to \mathbb{C}$, we denote by $f'$ the function $f'(\sigma) = \overline{f(\sigma^{-1})}$. The adjoint of the convolution with $f$ operator is the operator $g\mapsto f'* g$:
\begin{lem}\label{lem:simple adjointness lemma}
    For all functions $f,g,h\colon G \to \mathbb{C}$, we have 
    \begin{equation}\label{eq:adjointness}
    \langle f*g ,h\rangle = \langle g , f'*h\rangle.
            \end{equation}
\end{lem}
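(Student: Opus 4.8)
The plan is to unwind both sides of \eqref{eq:adjointness} directly from the definition of convolution and of the normalized inner product, and check that the resulting double averages agree. First I would expand the left-hand side: by definition of the inner product and then of $f*g$,
\[
\langle f*g,h\rangle = \Expect{x\sim G}{(f*g)(x)\,\overline{h(x)}} = \Expect{x\sim G}{\Expect{y\sim G}{f(y)g(y^{-1}x)}\,\overline{h(x)}},
\]
so that $\langle f*g,h\rangle = \Expect{x,y\sim G}{f(y)\,g(y^{-1}x)\,\overline{h(x)}}$, where $x,y$ are independent uniform elements of $G$.

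Next I would perform the substitution $z = y^{-1}x$, i.e. $x = yz$. For fixed $y$, as $x$ ranges uniformly over $G$ so does $z$, and this change of variables is measure-preserving; hence
\[
\langle f*g,h\rangle = \Expect{y,z\sim G}{f(y)\,g(z)\,\overline{h(yz)}}.
\]
On the other side, by definition of $f'$ and of convolution,
\[
\langle g, f'*h\rangle = \Expect{z\sim G}{g(z)\,\overline{(f'*h)(z)}} = \Expect{z\sim G}{g(z)\,\overline{\Expect{y\sim G}{f'(y)\,h(y^{-1}z)}}}.
\]
Since $f'(y) = \overline{f(y^{-1})}$, we get $\overline{f'(y)} = f(y^{-1})$, so the inner expectation contributes $\Expect{y\sim G}{f(y^{-1})\,\overline{h(y^{-1}z)}}$; now substituting $y \mapsto y^{-1}$ (again measure-preserving on $G$) turns this into $\Expect{y\sim G}{f(y)\,\overline{h(yz)}}$. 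Combining, $\langle g,f'*h\rangle = \Expect{y,z\sim G}{f(y)\,g(z)\,\overline{h(yz)}}$, which is exactly the expression obtained for $\langle f*g,h\rangle$ above, proving the identity.

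There is essentially no obstacle here: the only points requiring (minimal) care are that the two changes of variables used—$x\mapsto yz$ for fixed $y$, and $y\mapsto y^{-1}$—are bijections of $G$ and hence preserve the uniform average, and that conjugation of scalars distributes over the product and the average as used when handling $\overline{(f'*h)(z)}$. Everything else is bookkeeping, so I would present the computation as the three displays above with a sentence justifying each substitution.
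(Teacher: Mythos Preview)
Your proof is correct. You expand both sides directly as double averages over $G$, then match them via the substitutions $x\mapsto yz$ and $y\mapsto y^{-1}$; the justifications you give for these steps are exactly what is needed.

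The paper takes a slightly different route: rather than computing both sides in full, it observes that both $f\mapsto \langle f*g,h\rangle$ and $f\mapsto \langle g,f'*h\rangle$ are linear in $f$, so it suffices to verify the identity on the basis $\{\Delta_\sigma\}_{\sigma\in G}$, where $\Delta_\sigma$ is $|G|$ times the indicator of $\sigma$. Since $\Delta_\sigma * \varphi(\tau)=\varphi(\sigma^{-1}\tau)$ and $\Delta_\sigma'=\Delta_{\sigma^{-1}}$, both sides reduce to the same shifted inner product. Your approach is a bit more hands-on but avoids introducing the auxiliary functions $\Delta_\sigma$; the paper's approach hides the change of variables inside the single formula $\Delta_\sigma*\varphi(\tau)=\varphi(\sigma^{-1}\tau)$. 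Both are equally valid and of comparable length for this lemma.
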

\begin{proof}
For $\sigma \in G,$ let $\Delta_{\sigma}$ be the function whose value is $|G|$ on $\sigma$ and 0 on $G\setminus \{\sigma\}.$ Now $f\mapsto \langle f*g ,h\rangle$ and $f\mapsto \langle g , f'*h\rangle$ are both linear functionals. Hence, it is enough to verify the lemma for the functions $f = \Delta_{\sigma},$ which form a basis. For every function $f$ we have 
$\Delta_{\sigma} * f (\tau)= f(\sigma^{-1}\tau)$. We also have $\Delta_{\sigma}' = \Delta_{\sigma^{-1}}$. Thus, 
\[
\langle \Delta_{\sigma}*g ,h\rangle = \mathbb{E}_{\tau}[g(\sigma^{-1}\tau) h(\tau)]= \mathbb{E}_{\tau}[g(\tau) h(\sigma \tau ] = \langle g, \Delta_{\sigma^{-1}} * h \rangle,
\]
which completes the proof.
\end{proof}

\subsubsection*{The spaces \(W_{\chi}\) of matrix coefficients}
Let $(V,\rho)$ be a complex representation of a finite group $G$, we denote by $V^*$ the dual space of functionals on $V$. Suppose that $\chi = \mathrm{tr} \circ \rho$ is the irreducible character corresponding to $\rho$. Then each function of the form $\sigma \mapsto \varphi(\sigma v)$ with $v\in V$ and $\varphi\in V^*$ is called a \emph{matrix coefficient} for $\chi$. 

The space $W_{\chi}$ spanned by the matrix coefficients of $\chi$ is called the \emph{space of matrix coefficients} for $\chi$. 
The space of functions of $G$ is orthogonally decomposed as a direct sum of the spaces $W_{\chi}.$ 
Thus, every function $f$ can be uniquely othogonally decomposed as $f=\sum_{\chi} f^{=\chi},$ with $f^{=\chi}\in W_{\chi}.$ 

It is a direct consequence of the Peter--Weyl theorem that the functions $f^{=\chi}$ have the following nice formula. 
\begin{fact}\label{fact:peter--weyl}
    For a function \(f\) on \(G\) we have
    \[f^{=\chi}  = \chi(1)f*\chi.\]
\end{fact}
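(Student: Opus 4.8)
The plan is to reduce the identity to a computation on a spanning set of $\mathbb{C}[G]$ and then invoke Schur orthogonality. Both maps $f \mapsto f^{=\chi}$ and $f \mapsto \chi(1)\,f*\chi$ are linear in $f$, and since $\mathbb{C}[G] = \bigoplus_{\chi'} W_{\chi'}$ with each $W_{\chi'}$ spanned by the matrix coefficients of $\chi'$, the matrix coefficients of all irreducible characters of $G$ together span $\mathbb{C}[G]$. Hence it suffices to verify $f^{=\chi} = \chi(1)\,f*\chi$ when $f$ is a matrix coefficient of an irreducible character $\chi'$, say $f(\sigma) = \varphi(\rho'(\sigma)v)$ with $\mathrm{tr}\circ\rho' = \chi'$, $v \in V'$, $\varphi \in (V')^*$. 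Such an $f$ lies in $W_{\chi'}$, so its $\chi$-component equals $f$ itself if $\chi' = \chi$ and equals $0$ if $\chi' \neq \chi$ (because $W_{\chi'} \perp W_\chi$ in that case). Thus the whole statement follows once I show that $\chi(1)\,f*\chi$ equals $f$ when $\chi' = \chi$ and equals $0$ when $\chi' \neq \chi$.

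Next I would fix a matrix realization of the representation $\rho$ affording $\chi$, of dimension $d = \chi(1)$, and expand in coordinates: $\chi(y^{-1}x) = \mathrm{tr}(\rho(y^{-1})\rho(x)) = \sum_{i,j}\rho(y^{-1})_{ij}\,\rho(x)_{ji}$ and $f(y) = \sum_{k,l}\varphi_l\,\rho'(y)_{lk}\,v_k$. Substituting into $f*\chi(x) = \frac{1}{|G|}\sum_y f(y)\,\chi(y^{-1}x)$ and exchanging the finite sums yields
\[
f*\chi(x) = \sum_{i,j,k,l}\varphi_l\,v_k\,\rho(x)_{ji}\cdot\frac{1}{|G|}\sum_{y}\rho'(y)_{lk}\,\rho(y^{-1})_{ij}.
\]
The inner average is governed by Schur orthogonality: the operator $M \mapsto \frac{1}{|G|}\sum_y \rho'(y)M\rho(y^{-1})$ intertwines $\rho$ and $\rho'$, so by Schur's lemma it is zero when $\rho' \not\cong \rho$, while when $\rho' \cong \rho$ (choosing $\rho' = \rho$) it is $\frac{1}{d}\,\mathrm{tr}(M)\,\mathrm{Id}$, which in coordinates gives $\frac{1}{|G|}\sum_y \rho(y)_{lk}\rho(y^{-1})_{ij} = \frac{1}{d}\delta_{ki}\delta_{lj}$. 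In the first case $f*\chi \equiv 0$, exactly as wanted; in the second case the sum collapses to $\frac{1}{d}\sum_{i,l}\varphi_l\,\rho(x)_{li}\,v_i = \frac{1}{d}\varphi(\rho(x)v) = \frac{1}{\chi(1)}f(x)$, which is the claimed identity.

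This is a short and classical statement, so I do not anticipate a real obstacle; the only delicate points are bookkeeping. One should note that a matrix coefficient of $\chi$ may be taken with respect to any convenient matrix form of $\rho$ without leaving $W_\chi$, so fixing coordinates (and, when $\chi'=\chi$, replacing $v,\varphi$ by an intertwined pair so that $\rho'=\rho$) costs nothing; and one must apply Schur orthogonality with index conventions matched to the convolution $f*g(x) = \E_{y\sim G}[f(y)g(y^{-1}x)]$ — note that it is $\rho(y^{-1})$, and not $\overline{\rho(y)}$, that appears, so I use the orthogonality relation for $\sum_y \rho'(y)_{lk}\,\rho(y^{-1})_{ij}$ directly rather than its unitary variant. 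With those points settled the displayed computation goes through as written.
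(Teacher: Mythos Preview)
Your proof is correct. The paper does not actually prove this fact; it merely states it as ``a direct consequence of the Peter--Weyl theorem'' with a reference to \cite{faraut2008analysis}, and your Schur-orthogonality computation is precisely the standard way of unpacking that consequence for finite groups.
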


We write $\hat{G}$ for the set of irreducible characters of $G$. The following generalized Frobenius formula is due to Gill, Lifshitz, Pyber, and Szab\'{o}~\cite{GLPS}. 
\begin{lem}\label{lem: GLPS}
    Let $f,g\colon G\to \mathbb{C}$. Then 
    \[\mathbb{E}_{\sigma} \| f^{\sigma} * g \|_2^2 = \sum_{\chi\in \hat {G}} \frac{\|f^{=\chi}\|_2^2 \|g^{=\chi}\|_2^2}{\chi(1)^2}.\]
\end{lem}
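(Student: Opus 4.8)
The plan is to expand the squared norm, reduce to a single irreducible character using the orthogonal decomposition $f=\sum_\chi f^{=\chi}$, and then finish with an explicit matrix-coefficient computation closed off by Schur orthogonality.

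\emph{Reduction to one character.} First I would record three structural properties of the spaces $W_\chi$. They are closed under the right translation $h\mapsto h^\sigma$; concretely, for a fixed unitary representation $\rho$ with character $\chi$ and matrix coefficients $\rho_{ij}(\tau)=\langle\rho(\tau)e_j,e_i\rangle$ in an orthonormal basis, one has the translation identity $\rho_{ij}^\sigma=\sum_m\rho_{mj}(\sigma)\,\rho_{im}$, so $(f^{=\chi})^\sigma\in W_\chi$. They are mutually orthogonal (stated in the text). And $W_\chi*W_\psi=0$ whenever $\chi\ne\psi$: the convolution $\rho^\chi_{ij}*\rho^\psi_{kl}$ is, after using unitarity, a combination of the inner products $\langle\rho^\chi_{ij},\rho^\psi_{mk}\rangle$, which all vanish by Schur orthogonality. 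Since right translation is linear, $f^\sigma*g=\sum_{\chi,\psi}(f^{=\chi})^\sigma*g^{=\psi}=\sum_\chi(f^{=\chi})^\sigma*g^{=\chi}$, and as the $\chi$-th summand lies in $W_\chi$ and the $W_\chi$ are pairwise orthogonal, $\|f^\sigma*g\|_2^2=\sum_\chi\|(f^{=\chi})^\sigma*g^{=\chi}\|_2^2$. Taking $\mathbb{E}_\sigma$, it suffices to prove that for $f,g\in W_\chi$ one has $\mathbb{E}_\sigma\|f^\sigma*g\|_2^2=\|f\|_2^2\|g\|_2^2/\chi(1)^2$.

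\emph{The single-character computation.} Fix $\rho$, $d=\chi(1)$, and the $\rho_{ij}$ as above. Schur orthogonality gives $\langle\rho_{ij},\rho_{kl}\rangle=\tfrac1d\delta_{ik}\delta_{jl}$, so $\{\sqrt d\,\rho_{ij}\}$ is an orthonormal basis of $W_\chi$; writing $f=\sum_{ij}A_{ij}\rho_{ij}$ and $g=\sum_{kl}B_{kl}\rho_{kl}$, we get $\|f\|_2^2=\tfrac1d\|A\|_{\HS}^2$ and $\|g\|_2^2=\tfrac1d\|B\|_{\HS}^2$. The translation identity shows the coefficient matrix of $f^\sigma$ is $A\rho(\sigma)^{\top}$, and the convolution identity $\rho_{im}*\rho_{kl}=\tfrac1d\delta_{mk}\rho_{il}$ shows the coefficient matrix of $f^\sigma*g$ is $\tfrac1d A\rho(\sigma)^{\top}B$. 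Hence $\|f^\sigma*g\|_2^2=\tfrac1{d^3}\|A\rho(\sigma)^{\top}B\|_{\HS}^2$. By cyclicity of the trace, $\|A\rho(\sigma)^{\top}B\|_{\HS}^2=\mathrm{tr}\big(\overline{\rho(\sigma)}\,(A^*A)\,\rho(\sigma)^{\top}BB^*\big)$, so $\mathbb{E}_\sigma\|A\rho(\sigma)^{\top}B\|_{\HS}^2=\mathrm{tr}\big(\mathbb{E}_\sigma[\overline{\rho(\sigma)}\,(A^*A)\,\rho(\sigma)^{\top}]\,BB^*\big)$. Now $M\mapsto\mathbb{E}_\sigma[\rho(\sigma)M\rho(\sigma)^{-1}]$ commutes with $\rho$, hence by Schur's lemma equals $\tfrac{\mathrm{tr}M}{d}I$; transposing this identity gives $\mathbb{E}_\sigma[\overline{\rho(\sigma)}\,M\,\rho(\sigma)^{\top}]=\tfrac{\mathrm{tr}M}{d}I$ as well. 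With $M=A^*A$ we obtain $\mathbb{E}_\sigma\|A\rho(\sigma)^{\top}B\|_{\HS}^2=\tfrac{\|A\|_{\HS}^2}{d}\mathrm{tr}(BB^*)=\tfrac{\|A\|_{\HS}^2\|B\|_{\HS}^2}{d}$, so $\mathbb{E}_\sigma\|f^\sigma*g\|_2^2=\tfrac{\|A\|_{\HS}^2\|B\|_{\HS}^2}{d^4}=\tfrac{\|f\|_2^2\|g\|_2^2}{d^2}$, which is exactly what is needed. Summing over $\chi$ completes the proof.

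\emph{Where the difficulty lies.} There is no deep obstruction: the entire content is the two bilinear identities for matrix coefficients (right translation and convolution) together with Schur orthogonality and Schur's lemma. The only points requiring genuine care are keeping the transposes and complex conjugates consistent in the averaging step, and cleanly justifying the three structural properties of the $W_\chi$ — right-translation invariance, mutual orthogonality, and $W_\chi*W_\psi=0$ for $\chi\ne\psi$ — that drive the reduction to a single character. One could alternatively skip the reduction and expand $f$ and $g$ in matrix coefficients across all $\chi$ simultaneously, letting Schur orthogonality annihilate every cross term directly; this is essentially the same computation with one extra index.
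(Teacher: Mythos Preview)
Your proof is correct. The paper does not actually prove this lemma: it is stated as the ``generalized Frobenius formula'' and attributed to Gill, Lifshitz, Pyber, and Szab\'{o}~\cite{GLPS} without argument, so there is no in-paper proof to compare against. Your route via the orthogonal decomposition into isotypic components, together with the explicit matrix-coefficient identities and Schur's lemma for the averaging step, is the natural direct argument and all the bookkeeping (the transpose/conjugate in $\mathbb{E}_\sigma[\overline{\rho(\sigma)}\,M\,\rho(\sigma)^{\top}]=\tfrac{\mathrm{tr}\,M}{d}I$, the convolution rule $\rho_{im}*\rho_{kl}=\tfrac1d\delta_{mk}\rho_{il}$, and the $1/d$ normalisations) checks out.
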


\remove{
The classical Frobenius formula is about the special case where $f,g$ take the form $\frac{1_{\sigma^{G}}}{\mu(\sigma^{G})}$ and $\frac{1_{\tau^{G}}}{\mu(\tau^{G})}$. There $f= \sum \hat{f}(\chi) \chi$, where $\hat{f}(\chi) = \langle f ,\chi \rangle = \overline{\chi}(\sigma).$ The functions $f^{=\chi}$ then take the form $\hat{f}(\chi)\chi$. Lemma \ref{lem: GLPS} for class functions together with the orthonormality of the characters, then follows from the classical Frobenius formula, which states that  \[f*g = \sum_{\chi} \frac{\hat{f}(\chi)\hat{g}(\chi)}{\chi(1)}.\]
}

\section{From character theory to growth: the proof of Lemma \ref{lem:from character theory to growth}}

\subsubsection*{Notations}
For a subset $A\subseteq G$ we write $1_A\colon G\to \{0,1\}$ for its indicator. We write $\mu(A)$ for the density of $A$ given by $\frac{|A|}{|G|}$. 
\subsection{The $L_2$-norms of the projections $f^{=\chi}$.}
 We will make use of the following formula for $\|f^{=\chi}\|_2^2.$ 
\begin{lem}\label{lem:formula for f^chi}
Let $G$ be a finite group, $f\colon G\to \mathbb{C}$ be a function. Set the function $f'$ to be given by $f'(\sigma) = \overline{f(\sigma^{-1})}$. Then  
    \[\|f^{=\chi}\|_2^2 = \chi(1)\langle f'*f, \chi \rangle.\]
\end{lem}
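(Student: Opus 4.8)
The plan is to unwind the definitions and apply Fact~\ref{fact:peter--weyl} together with the adjointness Lemma~\ref{lem:simple adjointness lemma}. First I would recall that by Fact~\ref{fact:peter--weyl} we have $f^{=\chi} = \chi(1)\, f*\chi$, and I would like to compute $\|f^{=\chi}\|_2^2 = \langle f^{=\chi}, f^{=\chi}\rangle$. Since the decomposition $f = \sum_\chi f^{=\chi}$ is orthogonal and $f^{=\chi}$ lies in $W_\chi$, the projection onto $W_\chi$ is self-adjoint and idempotent, so $\langle f^{=\chi}, f^{=\chi}\rangle = \langle f, f^{=\chi}\rangle = \langle f, \chi(1) f*\chi\rangle = \chi(1)\langle f, f*\chi\rangle$.

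Next I would move the convolution to the other side of the inner product using Lemma~\ref{lem:simple adjointness lemma}. Writing $\langle f, f*\chi \rangle = \overline{\langle f*\chi, f\rangle}$ and applying \eqref{eq:adjointness} with the roles arranged so that $\langle f * \chi, f\rangle = \langle \chi, f' * f\rangle$, I get $\langle f, f*\chi\rangle = \overline{\langle \chi, f'*f\rangle} = \langle f'*f, \chi\rangle$. Combining with the previous step yields $\|f^{=\chi}\|_2^2 = \chi(1)\langle f'*f,\chi\rangle$, which is the claim. One small point to check carefully is which argument of the adjointness identity plays the role of $f$: the identity reads $\langle f*g, h\rangle = \langle g, f'*h\rangle$, so to handle $\langle f * \chi, f \rangle$ I take $g = \chi$ and $h = f$, obtaining $\langle \chi, f' * f\rangle$, and then I need to track the complex conjugation correctly since the inner product is Hermitian. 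An alternative, perhaps cleaner, route that avoids conjugation bookkeeping: note $f^{=\chi} = \chi(1) f * \chi$ and $\|f^{=\chi}\|_2^2 = \langle \chi(1) f*\chi, \chi(1) f *\chi\rangle$; using self-adjointness of the projection once gives $\langle f, \chi(1) f * \chi \rangle$, and then applying \eqref{eq:adjointness} directly in the form $\langle f * \chi, f \rangle = \langle \chi, f' * f \rangle$ after taking conjugates.

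I do not anticipate a genuine obstacle here — the statement is essentially a bookkeeping identity packaging Peter--Weyl and adjointness. The only mildly delicate point is making sure the complex conjugations in the Hermitian inner product, the definition $f'(\sigma) = \overline{f(\sigma^{-1})}$, and the definition of convolution all line up so that $\langle f'*f,\chi\rangle$ comes out with the correct (no extra) conjugation; I would verify this on the basis of indicator-type functions $\Delta_\sigma$ exactly as in the proof of Lemma~\ref{lem:simple adjointness lemma} if any doubt remains. It is also worth noting that $f'*f$ evaluated at the identity equals $\|f\|_2^2$ and more generally $f'*f$ is a positive-definite-type function, which makes the formula $\|f^{=\chi}\|_2^2 = \chi(1)\langle f'*f,\chi\rangle$ manifestly nonnegative, providing a useful sanity check.
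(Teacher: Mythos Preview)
Your proposal is correct and follows essentially the same approach as the paper: both proofs combine the projection identity $\langle f, f^{=\chi}\rangle = \|f^{=\chi}\|_2^2$, Fact~\ref{fact:peter--weyl}, and the adjointness Lemma~\ref{lem:simple adjointness lemma}. The only cosmetic difference is that the paper applies adjointness directly in the form $\langle f'*f,\chi\rangle = \langle f,(f')'*\chi\rangle = \langle f,f*\chi\rangle$ (using $(f')'=f$), whereas you pass through a complex conjugation and apply the identity with $g=\chi$, $h=f$; both routes are equally valid and amount to the same one-line computation.
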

\begin{proof}
    By Lemma \ref{lem:simple adjointness lemma} and Fact \ref{fact:peter--weyl} we have 
    \[\chi(1)\langle f'* f, \chi\rangle =  \langle  f, \chi(1) f * \chi\rangle = \langle  f, f^{=\chi} \rangle = \|f^{=\chi}\|_2^2,\]
    where the last equality follows from the definition of $f^{=\chi}$ as the projection of $f$ onto the space $W_{\chi}$ of matrix coefficients for $\chi$. 
\end{proof}

\subsection{Functional version of Lemma \ref{lem:from character theory to growth}}
We now show that a certain technical condition implies that $|A^{\sigma} B|$ is much larger than $B$. The following lemma simplifies to Lemma \ref{lem:from character theory to growth} after a proper choice of the parameters.

\begin{lem}\label{lem:wrapping up}
Let $G$ be a finite group and let $A,B\subseteq G$. Write $f = \frac{|G|}{|A|}1_A$. Let $K>1,\delta>0$ and $S$ be a set of irreducibe characters of $G$ such that for each $\chi \in S$ we have $\|f^{=\chi}\|_2^2\le \frac{\chi(1)^2}{K}$ and  $\sum_{\chi \notin S} \chi(1)^2 \le \frac{\delta|G|}{K|B|}$. Then we have  \[\mathbb{E}_{\sigma}\left[\frac{1}{|A^{\sigma} B|}\right] \le \frac{1+\delta}{K|B|}.\]
In particular, there exists $\sigma\in G$ with \(|A^{\sigma} B| \ge  \frac{K}{1+\delta}|B|.\)
\end{lem}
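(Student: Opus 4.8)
The plan is to bound $\mathbb{E}_\sigma[1/|A^\sigma B|]$ from above by the right-hand side. First I would observe that $A^\sigma B = \operatorname{supp}(g^\sigma * 1_B)$ where I can take $g = 1_A$, since $(1_A^\sigma * 1_B)(x) > 0$ precisely when $x \in A^\sigma B$; more conveniently, with $f = \frac{|G|}{|A|}1_A$ the convolution $f^\sigma * 1_B$ is a nonnegative function supported exactly on $A^\sigma B$ with $\|f^\sigma * 1_B\|_1 = \|f\|_1 \|1_B\|_1 = \mu(B)$. By Cauchy--Schwarz applied on the support,
\[
\mu(B) = \|f^\sigma * 1_B\|_1 = \langle f^\sigma * 1_B, 1_{A^\sigma B}\rangle \le \|f^\sigma * 1_B\|_2 \cdot \|1_{A^\sigma B}\|_2 = \|f^\sigma * 1_B\|_2 \cdot \mu(A^\sigma B)^{1/2},
\]
so $\mu(A^\sigma B) \ge \mu(B)^2 / \|f^\sigma * 1_B\|_2^2$, i.e. $\frac{1}{|A^\sigma B|} \le \frac{\|f^\sigma * 1_B\|_2^2}{|G|\mu(B)^2}$. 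Taking expectations over $\sigma$ reduces everything to estimating $\mathbb{E}_\sigma \|f^\sigma * 1_B\|_2^2$.

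Next I would apply the generalized Frobenius formula, Lemma~\ref{lem: GLPS}, with the pair $(f, 1_B)$:
\[
\mathbb{E}_\sigma \|f^\sigma * 1_B\|_2^2 = \sum_{\chi \in \hat G} \frac{\|f^{=\chi}\|_2^2 \, \|1_B^{=\chi}\|_2^2}{\chi(1)^2}.
\]
Now I split the sum over $\chi \in S$ and $\chi \notin S$. For $\chi \in S$ I use the hypothesis $\|f^{=\chi}\|_2^2 \le \chi(1)^2/K$, so the $S$-part is at most $\frac{1}{K}\sum_{\chi \in S} \|1_B^{=\chi}\|_2^2 \le \frac{1}{K}\|1_B\|_2^2 = \frac{\mu(B)}{K}$ by Parseval. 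For $\chi \notin S$ I instead bound $\|1_B^{=\chi}\|_2^2 \le \chi(1)^2 \|1_B\|_1^2$ — this follows from Fact~\ref{fact:peter--weyl} since $1_B^{=\chi} = \chi(1)\, 1_B * \chi$ and $\|1_B * \chi\|_2 \le \|1_B\|_1 \|\chi\|_2 = \|1_B\|_1 = \mu(B)$ by Young's inequality (or directly); I should also note $\|f^{=\chi}\|_2^2 \le \chi(1)^2$ always, e.g. because $\|f^{=\chi}\|_\infty \le \chi(1)\|f\|_1 = \chi(1)$ and one computes $\|f^{=\chi}\|_2^2 \le \chi(1) \langle f'*f, \chi\rangle \le \chi(1)\cdot\chi(1)$ via Lemma~\ref{lem:formula for f^chi} and $\|f'*f\|_\infty \le 1$. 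Hence the $(\notin S)$-part is at most $\sum_{\chi \notin S} \frac{\chi(1)^2 \cdot \chi(1)^2 \mu(B)^2}{\chi(1)^2} = \mu(B)^2 \sum_{\chi\notin S}\chi(1)^2 \le \mu(B)^2 \cdot \frac{\delta |G|}{K|B|} = \frac{\delta\, \mu(B)}{K}$, using the second hypothesis.

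Combining, $\mathbb{E}_\sigma \|f^\sigma * 1_B\|_2^2 \le \frac{(1+\delta)\mu(B)}{K}$, and therefore
\[
\mathbb{E}_\sigma\!\left[\frac{1}{|A^\sigma B|}\right] \le \frac{1}{|G|\mu(B)^2}\cdot \frac{(1+\delta)\mu(B)}{K} = \frac{(1+\delta)}{K\, |G| \mu(B)} = \frac{1+\delta}{K|B|},
\]
which is the displayed bound; the ``in particular'' clause follows since some $\sigma$ must achieve $\frac{1}{|A^\sigma B|}$ at most its average, and then $|A^\sigma B| \ge \frac{K}{1+\delta}|B|$ by convexity of $t\mapsto 1/t$ — more simply, the minimum of $|A^\sigma B|$ is at least $1/\mathbb{E}_\sigma[1/|A^\sigma B|]$ is false in general, so instead I pick $\sigma$ minimizing $1/|A^\sigma B|$; then $1/|A^\sigma B| \le \mathbb{E}_\sigma[1/|A^\sigma B|] \le \frac{1+\delta}{K|B|}$, giving the claim directly. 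The main obstacle I anticipate is getting the crude bounds $\|f^{=\chi}\|_2^2 \le \chi(1)^2$ and $\|1_B^{=\chi}\|_2^2 \le \chi(1)^2\mu(B)^2$ cleanly — these are where one must be careful with normalizations of the inner product and of convolution — but both are one-line consequences of Fact~\ref{fact:peter--weyl}, Lemma~\ref{lem:formula for f^chi}, and the submultiplicativity $\|u * v\|_2 \le \|u\|_1 \|v\|_2$ under the normalized conventions of the paper.
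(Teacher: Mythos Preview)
Your argument is correct and follows essentially the same route as the paper: Cauchy--Schwarz to bound $1/|A^{\sigma}B|$ by $\|f^{\sigma}*\,\cdot\,\|_2^2$, then the generalized Frobenius formula (Lemma~\ref{lem: GLPS}) split over $S$ and its complement, using the universal bound $\|f^{=\chi}\|_2^2\le \chi(1)^2$ for density functions on the complement. The only cosmetic differences are that the paper normalizes $B$ to the density $g=1_B/\mu(B)$ (so your $\|1_B^{=\chi}\|_2^2\le \chi(1)^2\mu(B)^2$ becomes their $\|g^{=\chi}\|_2^2\le \chi(1)^2$), and two harmless slips on your side: the support of $f^{\sigma}*1_B$ is $A\sigma^{-1}B$ rather than $A^{\sigma}B$ (immaterial after averaging, since $|A\sigma B|=|A^{\sigma}B|$), and the bound $\langle f'*f,\chi\rangle\le\chi(1)$ comes from $\|f'*f\|_1=1$, not from $\|f'*f\|_\infty$.
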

\begin{proof}
By Lemma \ref{lem:formula for f^chi} we have 
\[
\|f^{=\chi}\|_2^2 = \chi(1) \langle f' * f,\chi \rangle  \le \chi(1) \|f'*f\|_1 \|\chi\|_{\infty} = \chi(1)^2,
\]
and similarly for the density function $g:= \frac{1_B}{\mu(B)}$. Therefore by Lemma \ref{lem: GLPS}, 
    we have 
    \[\mathbb{E}_{\sigma} \|f^{\sigma} * g\|_2^2 = \sum_{\chi}\frac{\|f^{=\chi}\|_2^2 \|g^{=\chi}\|_2^2}{\chi(1)^2} \le \sum_{\chi\in S} \frac{1}{K}\|g^{=\chi}\|_2^2 + \sum_{\chi \notin S} \chi(1)^2.
    \]
    Now 
    \[
    \sum_{\chi\in S} \frac{1}{K}\|g^{=\chi}\|_2^2 \le \frac{\|g\|_2^2}{K} 
    \]
    and 
    \[
    \sum_{\chi \notin S} \chi(1)^2 \le \frac{\delta}{K(\mu(B))} = \delta \|g\|_2^2/K.  
    \]
    This shows that 
    \[
    \mathbb{E}_{\sigma} \|f^{\sigma} * g\|_2^2 \le \frac{(1+\delta)}{K}\|g\|_2^2.
    \]
    Now for $\sigma\in G$ let  $h_{\sigma} = 1_{A\sigma^{-1} B}$. Since the function  $f^{\sigma}$ is the density function of the uniform distribution on $A\sigma^{-1}$, the density function $f^{\sigma}*g$ corresponds to a probability distribution supported on $A\sigma^{-1}B$. Therefore, by Cauchy--Schwarz for all $\sigma$ we have \[1 = \langle f^{\sigma} * g, h_{\sigma}\rangle \le \| f^{\sigma} * g \|_2 \|h_{\sigma}\|_2.\]
    This yields that $\frac{|G|}{|A\sigma^{-1} B|} =  \frac{1}{\|h_{\sigma}\|_2^2} \le \|f^{\sigma}*g\|_2^2.$ Taking expectation over $\sigma$ we have 
\[
       \mathbb{E}_{\sigma \sim G} \frac{|G|}{|A\sigma B|} \le \frac{1+\delta}{K}\|g\|_2^2  = \frac{1+\delta}{K}\frac{|G|}{|B|}.
\]
The first part of the Lemma now follows by rearranging, while noting that $|A\sigma B| = |A^{\sigma}B|$. For the `in particualr' part let $\sigma$ be with $|A\sigma B|$ maximal. Then $\frac{1}{|A\sigma B|} \le \frac{1+\delta}{K|B|}.$ Rearranging, we obtain 
    \[
       |A^{\sigma}B| = |A\sigma B| \ge  \frac{K}{1+\delta}|B|.
    \]
\end{proof}

\subsection{Completing the proof of Lemma \ref{lem:from character theory to growth}}

\begin{proof}[Proof of Lemma \ref{lem:from character theory to growth}]
    Set $\delta = |G|^{-\epsilon/2}.$ By hypothesis we have 
    \[
    \sum_{\chi \notin S}\chi(1)^2 \le |G|^{\epsilon}\le \frac{\delta |G|}{K|B|}.
    \]
    Let $f= \frac{1_{A}}{\mu(A)}$ be the density function for the uniform measure on $A$. Then the function $f'$ is the density function for the uniform meeasure on $A^{-1}$. This shows that $f'*f$ is the density function for the covolution of the measures. 
    By Lemma \ref{lem:formula for f^chi} for every character $\chi$ we  have 
    \[
    \|f^{=\chi}\|_2^2 =    \chi(1) \langle \chi, f'*f \rangle = \chi(1)\overline{\mathbb{E}_{x,y\sim A}[\chi(y^{-1}x)]} \le \frac{\chi(1)^2}{K}.
    \]
    Lemma \ref{lem:wrapping up} now completes the proof. 
\end{proof}

\subsection{A variant for small sets $A$}

For small sets $A$ we make use of the following lemma in place of Lemma \ref{lem:from character theory to growth}.

\begin{lem}\label{lem: general LNS}
There exists $c>0$, such that the following holds. Let $\epsilon \in(0,1/4), A$ be a subset of a finite group $G$, and suppose that there exists  $\sigma \in A^{-1}A$ with $|\chi(g)| \le (1-\epsilon) \chi(1)$ for all irreducible character $\chi \ne 1.$ 
Then for each $B\subseteq G$ with $\mu(B) < c\epsilon,$ there exists $\sigma\in G$ with $|A^{\sigma} B| \ge (1 - \epsilon)^{c} |B|$.  
\end{lem}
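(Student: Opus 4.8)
The plan is to mimic the structure of the proof of Lemma \ref{lem:from character theory to growth}, but replacing the averaged second-moment input (Lemma \ref{lem: GLPS}) with a pointwise bound on matrix coefficients coming from the single element $\sigma_0 \in A^{-1}A$ with small character values. Write $f = \tfrac{1_A}{\mu(A)}$ for the density of the uniform measure on $A$, and $g = \tfrac{1_B}{\mu(B)}$ for the density of the uniform measure on $B$. As in the proof of Lemma \ref{lem:from character theory to growth}, by Lemma \ref{lem:formula for f^chi} we have $\|f^{=\chi}\|_2^2 = \chi(1)\,\overline{\mathbb{E}_{x,y\sim A}[\chi(y^{-1}x)]}$. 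The point is that $y^{-1}x$ ranges over $A^{-1}A$, and since $\sigma_0 \in A^{-1}A$ we can extract a specific good event: there is a pair $(x_0,y_0)$ with $y_0^{-1}x_0 = \sigma_0$, so at least a $\tfrac{1}{|A|^2}$ fraction of the mass of $f'*f$ sits on conjugates of $\sigma_0$ (more precisely on the double coset / conjugacy information). Actually the cleaner route: since the bound $|\chi(g)| \le (1-\epsilon)\chi(1)$ must hold for $g = \sigma_0$ specifically (and for all nontrivial $\chi$), and since I want a bound for \emph{all} nontrivial $\chi$, I should instead run the argument through a conjugation-averaged version of $f$. Concretely, let $\tilde f = \mathbb{E}_{\tau \sim G}[(f'*f)^{\tau}]$ where $(\cdot)^\tau$ denotes conjugation; this is a class function with $\tilde f = \sum_\chi \hat{\tilde f}(\chi)\chi$, and $\hat{\tilde f}(\chi) = \langle f'*f,\chi\rangle/\chi(1)\cdot(\text{something})$ — the key identity being that a class function supported on $A^{-1}A$ with total mass $1$ and which charges the class of $\sigma_0$ has its $\chi$-Fourier coefficient controlled by $|\chi(\sigma_0)|/\chi(1)$ on that class.

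More carefully, here is the step I would actually carry out. Since $\sigma_0 \in A^{-1}A$, the convolution $\mu_{A^{-1}}*\mu_A$ assigns mass at least $\tfrac{1}{|A|}\cdot\tfrac{1}{|A|}\cdot(\text{number of representations of }\sigma_0)$, so in particular strictly positive mass $p_0 \ge |A|^{-2}$, to $\sigma_0$. Decompose $f'*f = p_0\,|G|\,1_{\{\sigma_0\}}\cdot\tfrac{1}{?} + (\text{rest})$ — i.e. write the density $f'*f$ as a convex combination $f'*f = p_0\,\delta_{\sigma_0}^{\mathrm{dens}} + (1-p_0)\,\rho$ where $\delta_{\sigma_0}^{\mathrm{dens}} = |G|\,1_{\{\sigma_0\}}$ is the density of the point mass and $\rho$ is another density. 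Then $\langle f'*f,\chi\rangle = p_0\,\overline{\chi(\sigma_0)} + (1-p_0)\langle\rho,\chi\rangle$, and bounding $|\langle\rho,\chi\rangle| \le \|\rho\|_1\|\chi\|_\infty \le \chi(1)$ crudely gives $|\langle f'*f,\chi\rangle| \le p_0(1-\epsilon)\chi(1) + (1-p_0)\chi(1) = (1-p_0\epsilon)\chi(1)$. Hence $\|f^{=\chi}\|_2^2 \le (1 - p_0\epsilon)\chi(1)^2$ for every nontrivial $\chi$. This is exactly the hypothesis of Lemma \ref{lem:wrapping up} with $K = (1-p_0\epsilon)^{-1}$ and $S = \hat G \setminus \{1\}$; and $\sum_{\chi\notin S}\chi(1)^2 = 1$, so the side condition $1 \le \tfrac{\delta|G|}{K|B|}$ holds as long as $\mu(B) \le \delta/K \approx \delta$, which we can arrange by taking $\delta$ a small absolute constant and using $\mu(B) < c\epsilon$ (in fact any $\mu(B)$ bounded away from $1$ works here since $K$ is only barely above $1$). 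Plugging into Lemma \ref{lem:wrapping up} gives some $\sigma$ with $|A^\sigma B| \ge \tfrac{K}{1+\delta}|B| \ge (1-p_0\epsilon)^{-1}(1+\delta)^{-1}|B|$.

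The issue with the crude bound above is that $p_0 \ge |A|^{-2}$ makes $(1-p_0\epsilon)^{-1}|B|$ only negligibly bigger than $|B|$ — far weaker than the claimed $(1-\epsilon)^c|B|$. So the main obstacle, and the real content, is to \textbf{iterate}: rather than aiming for growth from a single conjugate, apply the one-step bound repeatedly, or equivalently replace $A$ by $A^{-1}A$ (which now \emph{contains} $\sigma_0$ with mass $\ge |A|^{-1}$ after one more fold, and whose self-convolutions spread the mass further), and track how the multiplicative gain compounds over $\Theta(\log|A|)$ steps until $|B|$ reaches $\mu(B)^{-1}$-ish or the set saturates. Concretely I expect the clean statement to come from: set $A_k = A^{-1}A^{\sigma_1}\cdots$ built greedily so that at each stage Lemma \ref{lem:wrapping up} (with the small-$K$ input just derived, possibly sharpened by noting the mass on $\{\sigma_0\}$ grows geometrically as we fold) multiplies the size of the running product by a factor $\ge 1 + \Omega(\epsilon)$ as long as the current set has density $< c\epsilon$; after $O(\log(1/\mu(B))/\epsilon)$ doublings we would overshoot, and rescaling the exponent gives the bound $(1-\epsilon)^c|B|$ with $c$ absolute. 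The delicate points will be (i) verifying the mass $p_0$ on the good element is non-negligible after folding — here one uses that $\sigma_0 = y_0^{-1}x_0$ forces $\sigma_0 \in A^{-1}A$, and more folds only help — and (ii) ensuring the density hypothesis $\mu(B) < c\epsilon$ survives through all the intermediate steps, which is why the constant $c$ (and not merely "$\mu(B)$ bounded away from $1$") is needed: the product sets grow and we must stop before they fill up $G$. I would handle (ii) by running the greedy process and invoking Lemma \ref{lem:wrapping up} only while the current product set still has density below the threshold, exactly as in the deduction of Theorem \ref{thm:main intro} from the skew-product theorem.
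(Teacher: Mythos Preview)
Your approach has a genuine gap. You correctly derive $\|f^{=\chi}\|_2^2 \le (1-p_0\epsilon)\chi(1)^2$ with $p_0 \ge |A|^{-2}$, recognise that this gain is far too small, and then propose to iterate. But the lemma asks for a \emph{single} $\sigma$ with $|A^\sigma B|$ large; your iteration would produce a bound on $|A^{\sigma_k}\cdots A^{\sigma_1} B|$, i.e.\ growth for a product of many conjugates of $A$, which is a different (and in this context weaker) statement. The hand-wave that ``the mass on $\{\sigma_0\}$ grows geometrically as we fold'' does not repair this: folding changes the set whose conjugate you are multiplying by.

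The idea you are missing is a one-line reduction that makes $p_0$ an absolute constant. If $A'\subseteq A$ then $|A'^{\sigma}B|\le |A^{\sigma}B|$ for every $\sigma$, so it suffices to prove the lemma for any subset of $A$. Take $A'=\{a,b\}$ where $b^{-1}a=\sigma_0$ is the element guaranteed by the hypothesis. Now when $x,y$ are uniform on $A'$, the random variable $y^{-1}x$ equals the identity with probability $\tfrac12$ and lies in $\{\sigma_0,\sigma_0^{-1}\}$ with probability $\tfrac12$; hence
\[
\|f^{=\chi}\|_2^2=\chi(1)\langle f'*f,\chi\rangle\le\tfrac34\chi(1)^2+\tfrac14(1-\epsilon)\chi(1)^2=(1-\epsilon/4)\chi(1)^2
\]
for every nontrivial $\chi$. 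Now Lemma~\ref{lem:wrapping up} applies directly with $K=(1-\epsilon/4)^{-1}$, $S=\hat G\setminus\{1\}$, and $\delta$ of order $K\mu(B)$; the hypothesis $\mu(B)<c\epsilon$ is exactly what is needed to make $\tfrac{K}{1+\delta}\ge (1-\epsilon)^{-c}$ for suitable small $c$. No iteration is required.
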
 
\begin{proof}
Let $a,b\in A$ be with $\chi(b^{-1}a)\le (1-\epsilon)\chi(1)$ for all $\chi$. Then without loss of generality $A = \{a,b\}.$ Set $f = 1_A/\mu(A)$. By Lemma \ref{lem:formula for f^chi} we have 
\[\|f^{=\chi}\|_2^2 = \chi(1) \langle f' * f,\chi \rangle \le 3/4\chi(1)^2 + 1/4 \chi(1)^2(1-\epsilon) = (1-\epsilon/4) \chi(1)^2.\] 

By Lemma \ref{lem:wrapping up} with $K = (1-\epsilon/4)$ and $\frac{1}{K\mu(B)}$ in the role $\delta$ we have 
    \[
    |A^{\sigma} B| \ge |B| \frac{K}{(1 +  K\mu(B))} \ge \frac{(1-\epsilon/4)}{1 + 2c\epsilon} \ge  (1 - \epsilon)^{c}|B|,
    \]
    provided that $c$ is sufficiently small.
\end{proof}

\remove{
\subsection{A variant of Lemma \ref{lem:from character theory to growth} for large $B$}
When the set $B$ is large it will be more convenient to use of the following lemma in place of Lemma \ref{lem:from character theory to growth}.

\begin{lem}\label{lem: shrinking the variance}
    Let $K>1$ $A$ be a subset of a finite group $G$ and suppose that $\|f^{=\chi}\|_2^2 \le \frac{\chi(1)^2}{K}$ for every irreducible character $\chi\ne \{1\}$. Then for every subset $B$ with $\mu(B)\le 1/\sqrt{K}$ there exists $\sigma \in G$ with 
    $ |A\sigma B| \ge \frac{\sqrt{K}}{2}|B|.$
\end{lem}
\begin{proof}
    This follows from Lemma \ref{lem:wrapping up} by setting $S$ to be the set of nontrivial irreducible characters, and setting $\delta = \frac{KB}{|G|},$ to obtain the existence of $\sigma$ with $|A\sigma B| \ge \frac{K|B|}{1 + K|B|/|G|} \ge \frac{K|B|}{1 + \sqrt{K}}\ge \frac{\sqrt{K}}{2}|B|.$ 
\end{proof}
}
\remove{
\begin{lem}
    Let $K>0,$ let $A$ be a subset of a finite group $G$ and suppose that $\|f^{=\chi}\|_2^2 \le \frac{\chi(1)^2}{K}$ for every irreducible character $\chi\ne \{1\}$. Then for every subset $B$, there exists $\sigma \in G$ with 
    $1 - \mu(A\sigma B) \le (1 - \mu(B)) \frac{2}{K}.$ 
\end{lem}
\begin{proof}
    Set $f = \frac{1_A}{\mu(A)} $ and $g = \frac{1_B}{\mu(B)}-1.$ Then by Lemma~\ref{lem: GLPS} we have
    \[
     \mathbb{E}_{\sigma} \left[\|f^{\sigma} *g\|_2^2\right] = \left[\|(f^{\sigma} -1) * g\|_2^2\right]  \le \frac{\| g - 1\|_2^2}{K} = \frac{1-\mu(B)}{K\mu(B)} \le \frac{1-\mu(B)}{K/2}.
    \]
    This shows that there exists $\sigma$ with 
    \(\|f^{\sigma} * g )\|_2^2 \le (1-\mu(B))\frac{2}{K}.\)

    On the other hand, let $h = 1- 1_{A\sigma^{-1}B}$. Then
    \[
    0 = \left \langle \frac{1_A\sigma^{-1}}{\mu(A)} * \frac{1_B}{\mu(B)}, h \right \rangle 
    = \mathbb{E}[h] + \langle f^{\sigma} * g, h \rangle \ge \mathbb{E}[h] - \|f^{\sigma} *g \|_2 \sqrt{\mathbb{E}[h]}. 
    \]
    Rearranging, now yields that 
    \[
    1- \mu(A\sigma^{-1}B) = \mathbb{E}[h] \le \|f^{\sigma} *g \|_2^2 = (1-\mu(B))\frac{2}{K}.
    \]
    \end{proof}
}

\section{Fourier anti-concentration inequalitites}

In this section our goal is to prove upper bounds on $\|f^{=\chi}\|_2^2$ for the density function $f= \frac{1_A}{\mu(A)}$, where $A$ is a subset of a classical finite simple group. By Lemma \ref{lem:formula for f^chi} our goal can be alternatively described as giving upper bounds for $\mathbb{E}_{x,y\sim A}[\chi(y^{-1}x)].$ We achieve this via the Larsen--Tiep~\cite{larsen2024uniform} character bounds. 
\subsection{Anti concentration}

\subsubsection*{Bounds for inner products with characters} 
Theorem \ref{thm:larsen--tiep} of Larsen and Tiep upper bounds the character values. We now deduce from it a bound on the inner product of functions with characters.  
Let $f= \frac{1_A}{\mu(A)}.$ Then $\|f\|_{\infty} = 1/\mu(A)$ and rearranging, we obtain $|A| = \frac{|G|}{\|f\|_{\infty}}$. Moreover, when $A$ is a conjugacy class $\sigma^{G},$ then $\langle f,\chi \rangle =\overline{\chi}(\sigma).$

Theorem \ref{thm:larsen--tiep} can be used to prove an analogue upper bound about the inner product between a function and a character, where $s = \frac{|G|}{\|f\|_{\infty}}$ takes the role of the size of $|A|$.  

\begin{lem}\label{lem:character bounds for general functions}
There exists $c>0,$ such that the following holds. 
    Let $G = Cl_{n}(\mathbb{F}_q)$ be a classical finite simple group. Let $f$ be a density function, and let $\alpha>0$. Write $s = \frac{|G|}{\|f\|_{\infty}}$. Then for every character $\chi$
    \begin{equation}\label{eq:larsen--tiep class functions}
    |\langle f ,\chi \rangle| \le  \chi(1)^{1- \frac{c \alpha \log s}{\log |G|}} + \frac{\chi(1)}{s^{1-\alpha}}.
    \end{equation}
\end{lem}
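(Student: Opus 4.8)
The plan is to reduce the bound on $\langle f, \chi\rangle$ to a pointwise application of the Larsen--Tiep character bound (Theorem~\ref{thm:larsen--tiep}), splitting the sum defining $\langle f, \chi\rangle$ according to the size of the conjugacy class of each group element. Concretely, write
\[
\langle f, \chi \rangle = \frac{1}{|G|}\sum_{g \in G} f(g)\,\overline{\chi(g)},
\]
and note that since $f$ is a density function, $\frac{1}{|G|}\sum_g f(g) = 1$, so this is a weighted average of the values $\overline{\chi(g)}$ with weights $f(g)/|G| \ge 0$. The idea is to call an element $g$ \emph{small} if $|g^G| \le s^{1-\alpha}$ (say), and \emph{large} otherwise, and estimate the two contributions separately.

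For the large elements, Theorem~\ref{thm:larsen--tiep} gives $|\chi(g)| \le \chi(1)^{1 - c \log|g^G|/\log|G|} \le \chi(1)^{1 - c(1-\alpha)\log s/\log|G|}$; absorbing the factor $(1-\alpha) \ge 1/2$ (or adjusting $c$) and using that the total weight is at most $1$, this contributes at most $\chi(1)^{1 - c'\alpha \log s/\log|G|}$ — actually one wants the exponent to scale with $\alpha$, so the cleaner split is at class size $s^{\alpha}$ rather than $s^{1-\alpha}$: declare $g$ large if $|g^G| \ge s^{\alpha}$, in which case $|\chi(g)| \le \chi(1)^{1 - c\alpha \log s/\log|G|}$, matching the first term of \eqref{eq:larsen--tiep class functions}. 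For the small elements, the point is that $f$ cannot place too much mass on them: since $\|f\|_\infty = |G|/s$, the total $f$-weight of the set $T = \{g : |g^G| < s^\alpha\}$ is $\frac{1}{|G|}\sum_{g \in T} f(g) \le \frac{|T|}{|G|}\cdot\frac{|G|}{s} = \frac{|T|}{s}$. One then bounds $|T|$ by the number of small classes times their maximal size; using the trivial bound $|\chi(g)| \le \chi(1)$ on this part gives a contribution of at most $\chi(1)\,|T|/s$.

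The crux is thus to show $|T| \le s^{1 + \alpha}$ or so, i.e.\ that the union of all conjugacy classes of size $< s^\alpha$ has size at most roughly $s^{1-\alpha}\cdot s^{2\alpha}$; this is where I expect the main work. The number of conjugacy classes of $G = Cl_n(\mathbb{F}_q)$ is at most $q^{Cn}$ by Fact~\ref{fact:number of conjugacy classes}, so $|T| \le q^{Cn} \cdot s^{\alpha}$. Whether this is good enough depends on how $s$ compares to $q^{n}$: the estimate is only useful when $s$ is not too small, i.e.\ $\log s \gtrsim n\log q = \Theta(\log|G|^{1/n}\cdot n)$... more precisely $q^{Cn} \le s^{C'}$ fails for tiny $s$. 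The likely resolution is that the bound \eqref{eq:larsen--tiep class functions} is only non-trivial (the right side exceeds $\chi(1)$, or the terms are meaningful) when $\log s$ is a constant fraction of $\log|G|$, in which case $q^{Cn} = |G|^{O(1/n)\cdot ?}$... The honest statement is: $q^{Cn} \le s^{O(1)}$ precisely when $s \ge q^{\Omega(n)}$, and for smaller $s$ one uses instead Fact~\ref{fact:sizes of conjugacy classes} (every class has size $\ge q^{cn}$) together with the constraint; when $s < q^{cn}$ the set $T$ of classes of size $< s^\alpha < q^{cn}$ is \emph{empty} for $\alpha \le 1$, so there are no small elements at all and the first term alone suffices. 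Patching these two regimes — $s \ge q^{\Omega(n)}$ where the class-count bound works, and $s < q^{\Omega(n)}$ where $T$ is empty — and choosing the constant $c$ in the statement small enough to absorb the constants from Facts~\ref{fact:number of conjugacy classes} and \ref{fact:sizes of conjugacy classes}, should complete the proof.
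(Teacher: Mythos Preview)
Your plan is essentially the paper's proof: split by conjugacy class size, apply Larsen--Tiep (Theorem~\ref{thm:larsen--tiep}) on the large classes, and bound the $f$-mass on the small classes via $\|f\|_\infty$ together with either the class-count Fact~\ref{fact:number of conjugacy classes} or the minimum class-size Fact~\ref{fact:sizes of conjugacy classes}, according to the regime. Two small corrections are needed. First, the threshold must be $s^{c_1\alpha}$ for a suitably small constant $c_1$ (the paper takes $c_1 = (1 + C_1/c_3)^{-1}$), not $s^{\alpha}$: with threshold $s^{\alpha}$ the small-class contribution is bounded by $q^{C_1 n} s^{\alpha}/s$, and in the regime $s^{\alpha}\ge q^{c_3 n}$ this gives $s^{\alpha(1+C_1/c_3)-1}$ rather than $s^{\alpha-1}$; the free constant $c$ in the statement sits only in the exponent of the \emph{first} term and cannot repair the second. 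Shrinking the threshold to $s^{c_1\alpha}$ fixes this at the cost of replacing $c$ by $c_1 c$ in the first term, exactly as the paper does. Second, in the small-$s$ regime the set $T$ is not empty but equals $\{1\}$; its contribution is $f(1)/|G| \le 1/s \le s^{\alpha-1}$, which is harmless.
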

\begin{proof}
First we set up the dependency of the constants on each other. Let $c_3$ be the constant $c$ of Fact \ref{fact:sizes of conjugacy classes}, $C_1$ be the constant $C$ of Fact \ref{fact:number of conjugacy classes}, $c_2$ be the constant $c$ of Theorem \ref{thm:larsen--tiep}, and  we let $c_1>0$ be sufficiently small with respect to the constants $c_3,C_1$.  

Let $A$ be the set of conjugacy classed of size $>s^{c_1\alpha}$ and $B$ be its complement consisting of conjugacy classes of size $\le s^{c_1\alpha}$. We split the cotribution to the inner product in the left hand side of \eqref{eq:larsen--tiep class functions} according to the contribution from conjugacy classes in $A$ and from $B$.
By the Larsen--Tiep Theorem (Theorem \ref{thm:larsen--tiep}) for all $x\in A$ we have \[|\chi(x)| \le \chi(1)^{1-c_2\frac{\log |x^{G}|}{\log |G|}}\le \chi(1)^{1- c_2 \frac{\log s^{\alpha c_1}}{\log |G|}} = \chi(1)^{1-  \alpha c_1 c_2 \frac{\log s}{\log |G|}}.\] 
Now for $x\in B$ we have 
$|\chi(x)|\le \chi(1)$.
Hence, \[|\langle f ,\chi \rangle| = \left|\mathbb{E}[f(x)\chi(x)1_{A}] + \mathbb{E}[f(x)\chi(x)1_{B}]\right| \le \chi(1)^{1- c_1 c_2\alpha \frac{\log s}{\log |G|}}  + |\mathbb{E}[f 1_{B}]| \chi(1),\]  
where we used the fact that $\mathbb{E}[f1_A]\le \mathbb{E}[f]\le 1.$

Setting $c = c_1c_2$, it remains to show that  $|\mathbb{E}[f 1_{B}]| \le s^{\alpha -1}.$
We have 
\[\mathbb{E}[f 1_{B}] \le \|f\|_{\infty} \|1_{B}\|_1 = \frac{|G|}{s} \|1_{B}\|_1. \] We may now make use of the fact that every conjugacy class of $G$ has size $\ge q^{c_3n}$ (Fact \ref{fact:sizes of conjugacy classes}) and the number of conjugacy classes of $G$ is $\le q^{C_1 n}$ (Fact \ref{fact:number of conjugacy classes}). 
In the case where  $s^{c_1 \alpha}\ge q^{c_3 n}$ we have 
\[\|1_{B}\|_2^2 = \sum_{g^{G}\subseteq B} \frac{|g^{G}|}{|G|}\le q^{C_1 n} \frac{s^{c_1 \alpha}}{|G|} \le \frac{s^{c_1\alpha(1 + C_1/c_3)}}{|G|}.
\]

Setting $c_1 = \frac{1}{1 + C_1/c_3}$ completes the proof of the case $s^{c_1 \alpha}>q^{c_3 n}.$

In the remaining case where $s^{c_1\alpha } < q^{c_3 n}$ we have $B = \{1\}$.
Thus,
\[
\mathbb{E}[f 1_{B}] = \frac{f(1)}{|G|} \le \frac{1}{s}\le s^{\alpha - 1}. 
\]
\end{proof}

\begin{thm}\label{thm: Larsen--Tiep corollary}
There exists $c>0$, such that the following holds. Let $\epsilon >0$, and let  $A$ be a subset of a classical finite simple group $G$, with $|A|\ge 16^{1/c\epsilon}$ and $f=\frac{|G|}{|A|}1_A$. Then for every character $\chi$ of dimension $\ge |G|^{\epsilon}$ we have
\[
\mathbb{E}_{x,y\sim A}[\chi(y^{-1}x)] \le \frac{\chi(1)}{|A|^{c\epsilon}}.
\]
Consequently,
    \[
    \|f^{=\chi}\|_2^2 \le \frac{\chi(1)^{2}}{|A|^{c\epsilon}}.
    \]
\end{thm}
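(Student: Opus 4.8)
The plan is to deduce Theorem~\ref{thm: Larsen--Tiep corollary} from Lemma~\ref{lem:character bounds for general functions} applied to the convolution-type density $f' * f$, since by Lemma~\ref{lem:formula for f^chi} we have $\|f^{=\chi}\|_2^2 = \chi(1)\langle f'*f,\chi\rangle$ and $f'*f$ is precisely the density function of $y^{-1}x$ for independent $x,y\sim A$. Thus $\mathbb{E}_{x,y\sim A}[\chi(y^{-1}x)] = \overline{\langle f'*f,\chi\rangle}$, and it suffices to bound $|\langle f'*f,\chi\rangle|$. The key quantitative input is that $\|f'*f\|_\infty \le \|f\|_\infty = |G|/|A|$ (convolution against a probability measure does not increase the sup-norm), so that the parameter $s$ attached to $f'*f$ in Lemma~\ref{lem:character bounds for general functions} satisfies $s = |G|/\|f'*f\|_\infty \ge |A|$.

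Concretely, I would let $c_0$ be the constant furnished by Lemma~\ref{lem:character bounds for general functions} and set $\alpha = 1/2$. Applying that lemma to $g := f'*f$ with this $\alpha$ gives
\[
|\langle f'*f,\chi\rangle| \le \chi(1)^{1 - \frac{c_0 \log s}{2\log|G|}} + \frac{\chi(1)}{s^{1/2}},
\]
with $s \ge |A|$. For the second term, $\chi(1)/s^{1/2} \le \chi(1)/|A|^{1/2} \le \chi(1)/|A|^{c\epsilon}$ as soon as $c\epsilon \le 1/2$, which is fine since we are free to shrink $c$. For the first term I use the hypothesis $\chi(1) \ge |G|^\epsilon$, so $\log\chi(1) \ge \epsilon \log|G|$, giving
\[
\chi(1)^{1 - \frac{c_0\log s}{2\log|G|}} = \chi(1)\cdot\chi(1)^{-\frac{c_0\log s}{2\log|G|}} \le \chi(1)\cdot |G|^{-\frac{c_0\epsilon\log s}{2\log|G|}} = \chi(1)\cdot s^{-c_0\epsilon/2} \le \frac{\chi(1)}{|A|^{c_0\epsilon/2}}.
\]
Setting $c := c_0/4$ (say) and combining the two bounds with the factor of $2$ absorbed using $|A| \ge 16^{1/(c\epsilon)}$ — which guarantees $|A|^{c\epsilon} \ge 16$, so that $2\cdot\frac{\chi(1)}{|A|^{c\epsilon/1}}$-type losses are swallowed by passing from exponent $c_0\epsilon/2$ to $c\epsilon = c_0\epsilon/4$ — yields $\mathbb{E}_{x,y\sim A}[\chi(y^{-1}x)] = |\langle f'*f,\chi\rangle| \le \chi(1)/|A|^{c\epsilon}$. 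The ``consequently'' clause is then immediate: $\|f^{=\chi}\|_2^2 = \chi(1)\langle f'*f,\chi\rangle \le \chi(1)\cdot\chi(1)/|A|^{c\epsilon} = \chi(1)^2/|A|^{c\epsilon}$ (and one checks $\langle f'*f,\chi\rangle$ is real and nonnegative, or just takes absolute values throughout, since $\|f^{=\chi}\|_2^2 \ge 0$).

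The only mild obstacle is bookkeeping: one must verify that $\|f'*f\|_\infty \le \|f\|_\infty$ (a one-line application of $|f'*f(x)| = |\mathbb{E}_{y}[f'(y)f(y^{-1}x)]| \le \|f\|_\infty \mathbb{E}_y[f'(y)] = \|f\|_\infty$ since $f'$ is a density), confirm that $f'*f$ is indeed a density function so Lemma~\ref{lem:character bounds for general functions} applies verbatim, and track the constants so that the final $c$ works simultaneously for the ``$s^{1/2}$'' term, the ``$\chi(1)^{1-\cdots}$'' term, and the factor-of-$2$ loss, using the hypothesis $|A| \ge 16^{1/(c\epsilon)}$ to absorb the latter. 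None of these steps is deep — the real content has already been extracted into Lemma~\ref{lem:character bounds for general functions}, which in turn rests on the Larsen--Tiep bound (Theorem~\ref{thm:larsen--tiep}).
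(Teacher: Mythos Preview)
Your proposal is correct and essentially identical to the paper's proof: both apply Lemma~\ref{lem:character bounds for general functions} with $\alpha=1/2$ to the density $g=f'*f$, use the hypothesis $\chi(1)\ge|G|^\epsilon$ to convert the exponent $\frac{c_0\log s}{2\log|G|}$ into $|A|^{-c_0\epsilon/2}$, and absorb the factor of $2$ via the size hypothesis on $|A|$. The only cosmetic difference is that the paper bounds $\|f'*f\|_\infty$ by Young's convolution inequality $\|f'*f\|_\infty\le\|f'\|_2\|f\|_2=|G|/|A|$, whereas you use the equally valid observation that convolving against a density does not increase the sup-norm; both give the same bound and the rest of the argument is the same.
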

\begin{proof}
Set $f = \frac{|G|}{A}1_A$,  and $g = f'*f,$ where  $f'= \frac{|G|}{A}1_{A^{-1}}.$ Then By Young's convolution inequality,  $\|g\|_{\infty} \le \|f'\|_2\|f\|_2 = \frac{|G|}{|A|}.$
Let $c'$ be the constant $c$ of Lemma~\ref{lem:character bounds for general functions}, and set $c = \min(c'/4,1/100)$. We may now plug in Lemma \ref{lem:character bounds for general functions} with $\alpha =1/2$ to obtain  that 
\[
|\langle g, \chi \rangle| \le \chi(1)^{1- 2c \frac{ \log |A|}{\log |G|}} + \frac{\chi(1)}{|A|^{1/2}} \le \chi(1) (|A|^{-2c \epsilon} + |A|^{-1/2}) \le \chi(1)|A|^{- c \epsilon},
\]

Now 
\[
\langle g, \chi \rangle  = \mathbb{E}_{y,x\sim A}[\overline{\chi}(y^{-1}x)] \frac{\|f^{=\overline{\chi}}\|_2^2}{\chi(1)}  
\]
by Lemma \ref{lem:formula for f^chi}.
The theorem follows by rearranging. 
\end{proof}

The following theorem is better suited for the low rank regime. 

\begin{corollary}\label{cor: Gluck}
There exists $c>0$, such that the following holds. Let $\epsilon >0$, and let $A\subseteq G$ be a subset of a finite simple group of Lie type over $\mathbb{F}_q$, and set $f=\frac{|G|}{|A|}1_A$. Then for every nontrivial character $\chi$ we have
\[
\mathbb{E}_{x,y\sim A}[\chi(y^{-1}x)] \le \frac{\chi(1)}{q^{c}} + \frac{1}{|A|}.
\]
Consequently,
    \[
    \|f^{=\chi}\|_2^2 \le \chi(1)^{2}\left({q^{-c} + \frac{1}{|A|}}\right).
    \]
\end{corollary}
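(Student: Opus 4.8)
The plan is to follow the proof of Theorem~\ref{thm: Larsen--Tiep corollary} almost verbatim, except that the Larsen--Tiep estimate (Lemma~\ref{lem:character bounds for general functions}), which forced that argument into the classical-group setting and required a delicate split of the group into large and small conjugacy classes, is replaced by Gluck's pointwise bound (Theorem~\ref{thm:Gluck}), which holds for every finite simple group of Lie type. The price is that Gluck's bound $|\chi(g)|\le\chi(1)/q^{c}$ only applies to $g\ne 1$, so the contribution of the identity element must be tracked separately; that is the only point needing care.

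Concretely, I would set $f=\frac{|G|}{|A|}1_A$, let $f'(\sigma)=\overline{f(\sigma^{-1})}=\frac{|G|}{|A|}1_{A^{-1}}(\sigma)$, and put $g=f'*f$. Exactly as in the proof of Lemma~\ref{lem:from character theory to growth}, $g$ is the density function of $y^{-1}x$ for independent $x,y\sim A$; in particular $g\ge 0$, $\|g\|_1=1$, and $g(1)=|G|\,\Pr_{x,y\sim A}[x=y]=|G|/|A|$. By Lemma~\ref{lem:formula for f^chi} we have $\|f^{=\chi}\|_2^2=\chi(1)\langle g,\chi\rangle$; since the left-hand side is a squared $L_2$-norm it is real and nonnegative, and since $\langle g,\chi\rangle=\mathbb{E}_{x,y\sim A}[\overline{\chi(y^{-1}x)}]$ is the complex conjugate of $\mathbb{E}_{x,y\sim A}[\chi(y^{-1}x)]$, it follows that $\mathbb{E}_{x,y\sim A}[\chi(y^{-1}x)]=\langle g,\chi\rangle=\|f^{=\chi}\|_2^2/\chi(1)$ is real and nonnegative. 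So it suffices to bound $|\langle g,\chi\rangle|$.

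The final step is to split the inner product according to whether $z=1$:
\[
\langle g,\chi\rangle=\tfrac{1}{|G|}g(1)\overline{\chi(1)}+\tfrac{1}{|G|}\sum_{z\ne 1}g(z)\overline{\chi(z)}=\tfrac{\chi(1)}{|A|}+\tfrac{1}{|G|}\sum_{z\ne 1}g(z)\overline{\chi(z)}.
\]
Here $\chi$ is nontrivial, so Theorem~\ref{thm:Gluck} gives $|\chi(z)|\le\chi(1)/q^{c}$ for every $z\ne 1$, and the remaining sum is bounded in modulus by $\frac{\chi(1)}{q^{c}}\cdot\frac{1}{|G|}\sum_{z\in G}g(z)=\frac{\chi(1)}{q^{c}}\|g\|_1=\chi(1)/q^{c}$. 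Hence $\mathbb{E}_{x,y\sim A}[\chi(y^{-1}x)]=\langle g,\chi\rangle\le\frac{\chi(1)}{q^{c}}+\frac{\chi(1)}{|A|}$, and therefore $\|f^{=\chi}\|_2^2=\chi(1)\langle g,\chi\rangle\le\chi(1)^2\big(q^{-c}+\tfrac{1}{|A|}\big)$, taking $c$ to be the constant of Theorem~\ref{thm:Gluck}. I do not expect a genuine obstacle here: because Gluck's bound is uniform over all $g\ne 1$, none of the conjugacy-class counting of Facts~\ref{fact:number of conjugacy classes} and \ref{fact:sizes of conjugacy classes} is needed, unlike in the Larsen--Tiep case; the only subtlety is the $\chi(1)/|A|$ term coming from $z=1$, which Gluck's bound cannot absorb, but which is harmless since in the ``consequently'' clause it gets multiplied by $\chi(1)$ and folded into the $\chi(1)^2/|A|$ term.
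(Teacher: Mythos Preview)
Your proof is correct and follows essentially the same approach as the paper: split the expectation $\mathbb{E}_{x,y\sim A}[\chi(y^{-1}x)]$ according to whether $y^{-1}x=1$ (which occurs with probability $1/|A|$ and contributes $\chi(1)/|A|$) or $y^{-1}x\ne 1$ (where Gluck's bound Theorem~\ref{thm:Gluck} gives $|\chi(y^{-1}x)|\le \chi(1)/q^{c}$), and then invoke Lemma~\ref{lem:formula for f^chi} for the ``consequently'' part. You have simply packaged this via the convolution $g=f'*f$ in the style of Theorem~\ref{thm: Larsen--Tiep corollary}, whereas the paper states the split directly; the underlying argument is identical.
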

\begin{proof}
    This follows from Theorem \ref{thm:Gluck} as $y^{-1}x = 1$ with probability $\frac{1}{|A|}$ and by Theorem~\ref{thm:Gluck} we have $|\chi(\sigma)|\le \chi(1)q^{-c}$ for an absolute constant $c>0$ for $\sigma\ne 1.$ 
\end{proof}
\remove{
\begin{thm}\label{thm: when |A| is small}
There exists $c>0$, such that the following holds. Let $A$ be a subset of a classical finite simple group $G$ and $f=\frac{|G|}{|A|}1_A.$ Then for every character $\chi$ we have 
    $\|f^{=\chi}\|_2^2 \le \chi(1)^{2- \frac{c}{n}} + \frac{\chi(1)^2}{|A|}.$
\end{thm}
\begin{proof}
    Set $f = \frac{|G|}{A}1_A$ and $f'= \frac{|G|}{A}1_{A^{-1}},$ and $g = f'*f.$ Then we have $\|g\|_{\infty} \le \|f'\|_2\|f\|_2 = \frac{|G|}{|A|}.$
We may now plug in Lemma \ref{lem:When s is small} with $\alpha =1/2$ to obtain that 
\[
|\langle g, \chi \rangle| \le \chi(1)^{1- c/n} + \frac{\chi(1)}{|A|}.
\]
Now 
\[
\langle g, \chi \rangle = \frac{\|f^{=\chi}\|_2^2}{\chi(1)}   
\]
by Lemma \ref{lem:formula for f^chi}.
The theorem follows by rearranging.
\end{proof}
}

\section{Completing the proofs for finite simple groups of Lie type}
In this section we prove Theorem \ref{thm:skew product theorem_intro} and then deduce the Liebeck--Nikolov--Shalev conjecture for the finite classical groups from it. 
\remove{
\begin{thm}
 For each $\epsilon \in(0,1/4)$ there exists $\delta>0$, such that the following holds. Let $G$ be a classical finite simple group and let $A\subseteq G$.
Then for each $B\subseteq G$ of size $<\delta |G|$ there exists $\sigma\in G$ with $|A\sigma B| \ge (1 + \epsilon/10) |B|$.  
\end{thm} 
\begin{proof}
    When combining Fact \ref{fact:sizes of conjugacy classes}, which lower bounds the sizes of conjugacy classes by $q^{cn}$, Fact \ref{fact:Quasirnadomness of simple classical groups}, which yields $\chi(1) \ge q^{cn}$ for each character $\chi \ne 1$ when combining these with the fact that $|G|\le q^{Cn^2}$ for an absolute constant $C>0$ with Theorem \ref{thm:larsen--tiep} we obtain that $\chi(g)\le q^{-c}$ for an absolute constant $c>0$ and every $g\ne 1.$ The Theorem now follows from Lemma \ref{lem: general LNS}. 
\end{proof}
}

We start with a lower bound of the form  $|A^{\sigma} B|\ge (1+c)|B|$ for an absolute constant $c>0$. This bound yields Theorem \ref{thm:skew product theorem} when $|A|^{\epsilon} = O(1)$.

\begin{thm}\label{thm: general LNS}
There exists $c>0$, such that the following holds. 
    Let $G$ be a finite simple group of Lie type over $\mathbb{F}_q$, let $A$ have size $\ge 2$, and let $B\subseteq G$ have size $<c|G|.$ Then there exists $\sigma\in G$ with $|A^{\sigma} B| \ge (1 + c)|B|$.  
\end{thm}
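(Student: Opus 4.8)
The plan is to combine the variant Lemma~\ref{lem: general LNS} with the character bounds available for all finite simple groups of Lie type, namely Gluck's theorem (Theorem~\ref{thm:Gluck}) for the bounded-rank regime and, if one wants uniformity in the rank, the Landazuri--Seitz bound (Fact~\ref{fact:Quasirnadomness of simple classical groups}) together with Larsen--Tiep (Theorem~\ref{thm:larsen--tiep}) for the classical groups. The key observation is that Lemma~\ref{lem: general LNS} requires only \emph{one} element $g \in A^{-1}A$ with $|\chi(g)| \le (1-\epsilon)\chi(1)$ for all nontrivial $\chi$; since $|A| \ge 2$, the set $A^{-1}A$ contains a nonidentity element $g$, and for such a $g$ the character bounds give exactly the required inequality with a uniform $\epsilon$.

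First I would fix a nontrivial $g \in A^{-1}A$ (possible since $|A|\ge 2$). Next I would invoke the character bounds to produce an absolute $\epsilon_0 > 0$ with $|\chi(g)| \le (1-\epsilon_0)\chi(1)$ for every nontrivial irreducible $\chi$ of $G$. For bounded rank this is immediate from Theorem~\ref{thm:Gluck}: $|\chi(g)| \le \chi(1)/q^{c} \le (1 - c')\chi(1)$ once $q \ge 2$, with $c' $ absolute. For unbounded rank one uses that $|g^G|\ge q^{c_1 n}$ (Fact~\ref{fact:sizes of conjugacy classes}) while $|G|\le q^{C n^2}$, so $\tfrac{\log|g^G|}{\log|G|} \ge \tfrac{c_1 n}{C n^2} = \tfrac{c_1}{Cn}$; feeding this into Theorem~\ref{thm:larsen--tiep} gives $|\chi(g)| \le \chi(1)^{1 - c_2 c_1/(Cn)}$, which is at most $(1-\epsilon_0)\chi(1)$ for an absolute $\epsilon_0$ once we also use $\chi(1) \ge q^{c_1 n} \ge 2^{c_1 n}$ from Fact~\ref{fact:Quasirnadomness of simple classical groups}, so that $\chi(1)^{-c_2 c_1/(Cn)}$ is bounded away from $1$ uniformly in $n$ and $q$. (Exceptional groups of Lie type have bounded rank and are covered by the first case.) Having secured $\epsilon_0$, I apply Lemma~\ref{lem: general LNS} with this $\epsilon_0$: it yields an absolute constant $c_4 > 0$ such that for every $B$ with $\mu(B) < c_4 \epsilon_0$ there is $\sigma \in G$ with $|A^{\sigma} B| \ge (1-\epsilon_0)^{c_4}|B|$. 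Finally, set $c := \min(c_4\epsilon_0,\ 1-(1-\epsilon_0)^{c_4})$; then $|B| < c|G|$ forces $\mu(B) < c_4\epsilon_0$, and $(1-\epsilon_0)^{c_4} = 1 - (1 - (1-\epsilon_0)^{c_4}) \ge 1 + c$ is false as stated—so more carefully I take $c$ small enough that $(1-\epsilon_0)^{c_4} \ge 1+c$ fails; instead I note $(1-\epsilon_0)^{c_4}\ge 1 + c$ should read: Lemma~\ref{lem: general LNS} actually gives growth $|A^\sigma B|\ge (1-\epsilon_0)^{c_4}|B|$ with exponent making this $\ge (1+c)|B|$ for suitable small $c$, since $(1-\epsilon_0)^{-|c_4|}$-type bookkeeping in its proof produces a factor exceeding $1$; concretely one re-reads the proof of Lemma~\ref{lem: general LNS}, whose displayed bound $\frac{(1-\epsilon_0/4)}{1+2c\mu(B)\cdot K^{-1}\cdots}$ exceeds $1$ when $\mu(B)$ is small, giving the multiplicative gain $1+c$.

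The main obstacle is the last bookkeeping step: Lemma~\ref{lem: general LNS} as literally stated outputs $|A^\sigma B| \ge (1-\epsilon)^c|B|$, a factor \emph{less} than $1$, which is not what we want; the real content is buried in its proof, where the intermediate bound is $|A^\sigma B| \ge |B|\cdot \frac{K}{1 + K\mu(B)}$ with $K = 1 - \epsilon_0/4 < 1$ but $K\mu(B)$ tiny, so $\frac{K}{1+K\mu(B)}$ need not exceed $1$ either. Thus to genuinely get a $(1+c)$ multiplicative gain I cannot use $A = \{a,b\}$ alone; I must instead apply Lemma~\ref{lem:wrapping up} directly with $K$ slightly \emph{larger} than $1$, which requires $\|f^{=\chi}\|_2^2 \le \chi(1)^2/K$ with $K>1$ for all nontrivial $\chi$. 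This is exactly what Corollary~\ref{cor: Gluck} provides: for $|A|\ge 2$, $\|f^{=\chi}\|_2^2 \le \chi(1)^2(q^{-c} + 1/|A|) \le \chi(1)^2(q^{-c} + 1/2)$, which is $\le \chi(1)^2/K$ for $K = 1/(1/2 + q^{-c}) > 1$ when $q$ is large enough, and for small $q$ one uses a fixed power $A^k$ or the fact that $|A|$ can be assumed large (else $G$ itself is bounded). So the corrected plan is: apply Corollary~\ref{cor: Gluck} (for bounded rank) or Theorem~\ref{thm: Larsen--Tiep corollary} combined with a bound on $\sum_{\chi(1)<|G|^\epsilon}\chi(1)^2$ via Theorem~\ref{thm: Counting low dimensional representations} (for unbounded rank) to get $\|f^{=\chi}\|_2^2 \le \chi(1)^2/K$ with absolute $K>1$ for the relevant $\chi$, then invoke Lemma~\ref{lem:wrapping up} with $\delta$ absorbing the small-dimensional characters, concluding $|A^\sigma B| \ge \frac{K}{1+\delta}|B| \ge (1+c)|B|$ for an absolute $c>0$ once $\mu(B)$ is smaller than an absolute constant. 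Handling the genuinely small $q$ (and hence possibly small $|A|$, small $|G|$) case is a finite check or follows by passing to a bounded power of $A$.
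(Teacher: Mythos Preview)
Your initial plan---pick a nonidentity $g\in A^{-1}A$, invoke a uniform character bound to get $|\chi(g)|\le (1-\epsilon_0)\chi(1)$ for all nontrivial $\chi$, then apply Lemma~\ref{lem: general LNS}---is exactly the paper's proof. But you make it harder than it is: Gluck's bound (Theorem~\ref{thm:Gluck}) already gives $|\chi(g)|\le \chi(1)/q^{c'}$ with an \emph{absolute} constant $c'$, uniformly over all finite simple groups of Lie type and all ranks. Since $q\ge 2$, this immediately yields $|\chi(g)|\le (1-\epsilon_0)\chi(1)$ with $\epsilon_0 = 1-2^{-c'}$ absolute. Your bounded/unbounded-rank split and the detour through Larsen--Tiep, Landazuri--Seitz, and conjugacy-class sizes are unnecessary.

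You are right that the stated conclusion of Lemma~\ref{lem: general LNS}, namely $|A^{\sigma}B|\ge (1-\epsilon)^{c}|B|$, is a factor below $1$ and hence useless; this is simply a typo in the paper, and the intended conclusion is $|A^{\sigma}B|\ge (1-\epsilon)^{-c}|B|$. The slip is already visible in its proof: from $\|f^{=\chi}\|_2^2\le (1-\epsilon/4)\chi(1)^2$ one should take $K=(1-\epsilon/4)^{-1}>1$ in Lemma~\ref{lem:wrapping up} (not $K=1-\epsilon/4$), and then the output $|A^{\sigma}B|\ge \frac{K}{1+K\mu(B)}|B|$ exceeds $(1+c)|B|$ once $\mu(B)<c\epsilon$ for suitably small absolute $c$. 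With that correction the paper's two-line argument goes through, and your elaborate ``corrected plan'' via Corollary~\ref{cor: Gluck}---which, as you yourself notice, runs into trouble for $|A|=2$ and small $q$---is not needed.
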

\begin{proof}
 By Lemma \ref{lem:When s is small} there exists an absolute constant $c'>0$, such that $\chi(\sigma)\le q^{-c'}$ for every irreducible character $\chi \ne 1$ and $\sigma \ne 1$. Lemma \ref{lem: general LNS} now completes the proof. 
\end{proof}

\begin{lem}\label{lem:bounded rank special case}
For every $r>0$ there exists $c>0$, such that the following holds. Let $G$ be a finite simple group of Lie type over $\mathbb{F}_q$ of rank at most $r$, and let $\epsilon>0.$. Then for every subsets $A,B\subseteq G$, with $|B|\le \min(|G|^{1-\epsilon},c|G|)$ there exists $\sigma \in G$ with $|A^{\sigma}B|\ge |A|^{c\epsilon}|B|.$
\end{lem}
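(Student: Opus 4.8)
The idea is to dichotomize on whether the desired growth factor $|A|^{c\epsilon}$ is bounded by an absolute constant or not; this is controlled by the size of $\epsilon\log|G|$. The rank bound enters only through the estimate $|G|\le q^{C_1}$ with $C_1=C_1(r)$, which is what makes the fixed power-of-$q$ savings in Gluck's character bound (Theorem~\ref{thm:Gluck}, Corollary~\ref{cor: Gluck}) large enough to dominate $|A|^{c\epsilon}$ once $c=c(r)$ is small. (In unbounded rank $|A|^{c\epsilon}$ can be an arbitrarily large power of $q$, which is why Corollary~\ref{cor: Gluck} cannot be used there and the Larsen--Tiep input is needed instead.) The two extremes are handled by Theorem~\ref{thm: general LNS} and by Lemma~\ref{lem:from character theory to growth} respectively.

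\textbf{Setup and the easy case.} First I would fix: Gluck's constant $c_0>0$ (absolute); an absolute $c_1>0$ with $\chi(1)\ge q^{c_1}$ for every nontrivial irreducible $\chi$ (Landazuri--Seitz; see Fact~\ref{fact:Quasirnadomness of simple classical groups}); $C_1=C_1(r)$ with $|G|\le q^{C_1}$; and the absolute constant $c_\star>0$ of Theorem~\ref{thm: general LNS}. Put $c_2=\min(c_0,c_1)$, choose $q_0=q_0(r)$ large enough that $q_0^{c_2}\ge 16$ and $q_0\ge 2^{14/C_1}$, set $M=C_1\log q_0$ (so $M\ge 14\log 2$), and take $c=c(r)$ smaller than each of $c_\star/2$, $\log(1+c_\star)/M$, $c_2/(2C_1)$, and $1/14$. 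We may assume $A,B\ne\emptyset$, $|A|\ge 2$ and $\epsilon\le 1$, since otherwise $|A^\sigma B|\ge|B|\ge|A|^{c\epsilon}|B|$ trivially. If $\epsilon\log|G|<M$, then since $|A|\le|G|$ we get $|A|^{c\epsilon}<e^{cM}\le 1+c_\star$, so Theorem~\ref{thm: general LNS} (applicable because $|A|\ge 2$ and $|B|\le c|G|<c_\star|G|$) gives $\sigma$ with $|A^\sigma B|\ge(1+c_\star)|B|\ge|A|^{c\epsilon}|B|$; this case does not use the rank bound.

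\textbf{The main case $\epsilon\log|G|\ge M$.} Here $\log|G|\ge\epsilon\log|G|\ge M=C_1\log q_0$ while $\log|G|\le C_1\log q$, so $q\ge q_0$. I would apply Lemma~\ref{lem:from character theory to growth} with $S$ the set of all nontrivial irreducible characters (so $\sum_{\chi\notin S}\chi(1)^2=1<|G|^{\epsilon/3}$), with its internal parameter taken to be $\epsilon/3$ (so the constraint becomes $|B|<|G|^{1-2\epsilon/3}$, which holds since $|B|\le|G|^{1-\epsilon}$), and with $K:=\min\bigl(q^{c_2}/2,\ |G|^{\epsilon/7}\bigr)$, which satisfies $1<K<|G|^{\epsilon/6}$. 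The one real input is $\bigl|\mathbb{E}_{x,y\sim A}[\chi(y^{-1}x)]\bigr|\le\chi(1)/K$ for $\chi\in S$: this quantity equals $\bigl\|\tfrac{1}{|A|}\sum_{a\in A}\rho(a)\bigr\|_{\mathrm{HS}}^2\ge 0$ for a unitary $\rho$ affording $\chi$, and by Corollary~\ref{cor: Gluck} together with $\chi(1)\ge q^{c_1}$ and $|A|\ge 2$ it is at most $\chi(1)q^{-c_0}+1/|A|\le\chi(1)q^{-c_0}+\chi(1)/(2q^{c_1})\le 2\chi(1)/q^{c_2}=\chi(1)/(q^{c_2}/2)\le\chi(1)/K$. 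Lemma~\ref{lem:from character theory to growth} then yields $\sigma$ with $|A^\sigma B|>K|B|/(1+|G|^{-\epsilon/6})\ge\tfrac12 K|B|$, and it remains to verify $K\ge 2|A|^{c\epsilon}$: from $|A|\le q^{C_1}$, $\epsilon\le1$, $c\le c_2/(2C_1)$ and $q\ge q_0$ one gets $q^{c_2}/2\ge 2q^{C_1c}\ge 2|A|^{c\epsilon}$, and from $|A|\le|G|$, $c\le1/14$ and $\epsilon\log|G|\ge M\ge14\log2$ one gets $|G|^{\epsilon/7}\ge 2|G|^{c\epsilon}\ge 2|A|^{c\epsilon}$; hence the minimum is $\ge 2|A|^{c\epsilon}$ and $|A^\sigma B|>|A|^{c\epsilon}|B|$.

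\textbf{Main obstacle.} The only delicate point is the tension in the second case between the cap $K<|G|^{\epsilon/6}$ forced by Lemma~\ref{lem:from character theory to growth} and the fact that Corollary~\ref{cor: Gluck} only supplies $K\approx q^{c_2}$: one must take $K$ to be the minimum of the two candidates, and it works precisely because, in the range $\epsilon\log|G|\ge M$, each candidate separately beats $2|A|^{c\epsilon}$ — the first because $|A|^{c\epsilon}\le q^{C_1c}$ is a tiny power of $q$ once $c\le c_2/(2C_1)$, the second because $\epsilon\log|G|$ is large. Everything else is bookkeeping of constants that depend only on $r$ through $C_1$, together with the observation that the small-$A$ and small-$\epsilon\log|G|$ extremes are absorbed cleanly by Theorem~\ref{thm: general LNS}.
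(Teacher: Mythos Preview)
Your proof is correct and follows essentially the same approach as the paper: dichotomize into a small case handled by Theorem~\ref{thm: general LNS} and a main case handled by feeding Gluck's bound (Corollary~\ref{cor: Gluck}) with $S$ equal to the set of all nontrivial irreducible characters into the wrapping-up machinery, using the rank bound only through $|G|\le q^{C(r)}$. The only cosmetic differences are that the paper dichotomizes on the size of $|A|^{\epsilon}$ rather than $\epsilon\log|G|$, and invokes Lemma~\ref{lem:wrapping up} directly with $K=|A|^{2c\epsilon}$ rather than Lemma~\ref{lem:from character theory to growth} with your $K=\min(q^{c_2}/2,\,|G|^{\epsilon/7})$; your bookkeeping is more explicit but the substance is identical.
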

\begin{proof}
 Let $c'$ be as in Corollary \ref{cor: Gluck}, $c$ sufficiently small with respect to $r,c'$, $K = |A|^{-2c\epsilon}$ and $\delta = \frac{|B|}{K|G|}$.

 By Theorem \ref{thm: general LNS} we may assume that $|A|^{\epsilon}$ is sufficiently large with respect to $r,c'$. 
 Now as $|A|\le |G| \le q^{Cr^2}$ for an absolute constant $C>0$ we have $\frac{1}{|A|} + q^{-c'} \ge |A|^{-2c\epsilon} =K$, provided that $|A|$ is sufficiently large, and $c$ is sufficiently small with respect to $c',r$. We may now apply Corollary \ref{cor: Gluck} and Lemma \ref{lem:wrapping up} to obtain that $|A^{\sigma}B| \ge \frac{K|B|}{1+\delta}$ for some $\sigma\in G.$ Now $\delta \le \frac{|G|^{\epsilon}}{K} \le 1,$ provided that $c\le 1.$ So $|A^{\sigma}B|\ge K|B|/2\ge |A|^{c\epsilon}|B|$, provided that $|A|^{\epsilon}$ is sufficiently large. 
\end{proof}

 We now prove Theorem \ref{thm:skew product theorem}, which we restate for the convenience of the reader. 
\begin{repskewthm}
There exist $c>0$, such that the following holds. Let $G$ be a finite simple group of Lie type, let $\epsilon >0$, and suppose that sets $A,B\subseteq G$ satisfy $|B|<\min(|G|^{1 -\epsilon}, c|G|).$ Then 
    \[
    |A^\sigma B| \ge |A|^{c\epsilon}|B|
    \]
    for some $\sigma \in G.$
\end{repskewthm}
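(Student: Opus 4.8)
The plan is to split into two regimes according to the rank of $G$. First, if $G$ has bounded rank (say rank at most some absolute constant $r_0$), the statement is exactly Lemma~\ref{lem:bounded rank special case}, so nothing more is needed there. So assume $G = Cl_n(\mathbb{F}_q)$ is a classical group of large rank $n$; the exceptional groups of large rank do not exist, so we may genuinely reduce to the classical case. As usual, we may assume $|A|^{\epsilon}$ is large: if $|A|^{\epsilon} = O(1)$ then Theorem~\ref{thm: general LNS} already gives $|A^{\sigma}B| \ge (1+c)|B| \ge |A|^{c'\epsilon}|B|$ after shrinking the constant. Similarly we may assume $\epsilon$ is bounded away from $1$, say $\epsilon < 1/2$, since for $\epsilon$ close to $1$ the set $B$ is tiny and the bound is weaker.

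The heart of the argument is to feed the Larsen--Tiep--based Fourier anti-concentration bound (Theorem~\ref{thm: Larsen--Tiep corollary}) into the character-theoretic growth criterion (Lemma~\ref{lem:from character theory to growth}, or more directly its functional version Lemma~\ref{lem:wrapping up}). The issue is that Theorem~\ref{thm: Larsen--Tiep corollary} only controls $\|f^{=\chi}\|_2^2$ for characters $\chi$ of dimension $\ge |G|^{\epsilon'}$ for a suitable threshold $\epsilon'$; the low-dimensional characters must be collected into the exceptional set $S^c$ of Lemma~\ref{lem:from character theory to growth}, and we need $\sum_{\chi \notin S} \chi(1)^2$ to be small — smaller than $|G|^{\epsilon''}$ for the relevant $\epsilon''$. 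This is exactly where Theorem~\ref{thm: Counting low dimensional representations} enters: the number of irreducible characters of $Cl_n(\mathbb{F}_q)$ of dimension at most $D$ is at most $D^{C/n}$. Summing $\chi(1)^2 \le D^2$ over at most $D^{C/n}$ such characters with $D = |G|^{\epsilon'}$ gives a total of at most $|G|^{2\epsilon' + C\epsilon'/n} \le |G|^{3\epsilon'}$ once $n$ is large. Choosing $\epsilon'$ a small constant multiple of $\epsilon$ (small enough that $3\epsilon' < $ the exponent we are allowed in Lemma~\ref{lem:from character theory to growth}, i.e. roughly $\epsilon' \le \epsilon/6$), we get the required smallness of the tail sum.

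So the concrete steps are: (i) reduce to $G = Cl_n(\mathbb{F}_q)$ with $n$ large, $|A|$ large relative to $1/\epsilon$, and $\epsilon < 1/2$; (ii) set $\epsilon' = \epsilon/6$ (or a similar constant fraction) and let $S$ be the set of irreducible characters of dimension $> |G|^{\epsilon'}$; (iii) apply Theorem~\ref{thm: Counting low dimensional representations} to bound $\sum_{\chi \notin S}\chi(1)^2 \le |G|^{2\epsilon'} \cdot |G|^{C\epsilon'/n} \le |G|^{3\epsilon'} \le |G|^{\epsilon/2}$; (iv) apply Theorem~\ref{thm: Larsen--Tiep corollary} with its $\epsilon$ taken to be $\epsilon'$, obtaining $\|f^{=\chi}\|_2^2 \le \chi(1)^2 / |A|^{c'\epsilon'}$ for every $\chi \in S$, i.e. $|\mathbb{E}_{x,y\sim A}[\chi(y^{-1}x)]| \le \chi(1)/K$ with $K = |A|^{c'\epsilon'/1}$ — more precisely take $K = |A|^{c'\epsilon'/2}$ to have slack, and check $K < |G|^{(\epsilon/2)/2}$, which holds since $|A| \le |G|$ and $c'\epsilon'/2$ is a small multiple of $\epsilon$; (v) invoke Lemma~\ref{lem:from character theory to growth} (with its $\epsilon$ replaced by $\epsilon/4$, say, so that $|B| < |G|^{1-\epsilon} < |G|^{1 - 2(\epsilon/4)}$ and the hypothesis $K < |G|^{(\epsilon/4)/2}$ holds) to conclude $|A^{\sigma}B| > \frac{K|B|}{1 + |G|^{-\epsilon/8}} \ge |A|^{c\epsilon}|B|$ for an absolute $c>0$, after absorbing the $(1 + |G|^{-\epsilon/8})$ factor using that $|A|^{c'\epsilon'/2}$ dominates it (here we use $|A|$ large and, if necessary, that $K$ is anyway at least $2$ so we can afford to halve it).

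The main obstacle is the bookkeeping of the several small constants ($c'$ from Theorem~\ref{thm: Larsen--Tiep corollary}, $C$ from Theorem~\ref{thm: Counting low dimensional representations}, the Landazuri--Seitz lower bound $\chi(1) \ge q^{c_1 n}$ guaranteeing that the dimension threshold $|G|^{\epsilon'}$ is meaningful, and the constant $C$ with $|G| \le q^{Cn^2}$), and making sure the chain of inequalities $K < |G|^{\epsilon''/2}$, $\sum_{\chi\notin S}\chi(1)^2 < |G|^{\epsilon''}$, $|B| < |G|^{1-2\epsilon''}$ are simultaneously satisfiable with a single choice $\epsilon'' \asymp \epsilon$. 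There is also the minor point that Theorem~\ref{thm: Larsen--Tiep corollary} requires $|A| \ge 16^{1/(c'\epsilon')}$, which is precisely the "$|A|^{\epsilon}$ large" reduction handled by Theorem~\ref{thm: general LNS}; one must also separately dispose of the degenerate case $|A| = 1$, but the hypothesis is $|A| \ge 2$ (indeed the statement as written does not even require a lower bound, but $|A|^{c\epsilon} = 1$ when $|A|=1$ makes it vacuous there). No genuinely new idea is needed beyond correctly combining the three imported ingredients.
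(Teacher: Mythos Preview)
Your proposal is correct and follows essentially the same route as the paper's proof: reduce to the classical case via Lemma~\ref{lem:bounded rank special case}, handle the small-$|A|$ regime with Theorem~\ref{thm: general LNS}, and in the main regime take $S$ to be the characters of dimension at least $|G|^{c'\epsilon}$, control $\sum_{\chi\notin S}\chi(1)^2$ using Theorem~\ref{thm: Counting low dimensional representations}, control the large characters via Theorem~\ref{thm: Larsen--Tiep corollary}, and conclude with Lemma~\ref{lem:from character theory to growth}. The only differences are cosmetic choices of constants (the paper sets the dimension threshold at $|G|^{c'\epsilon}$ with $c'$ a sufficiently small absolute constant rather than fixing $\epsilon'=\epsilon/6$, and invokes Lemma~\ref{lem:from character theory to growth} directly with $K=|A|^{2c'\epsilon}$), and your reduction to $\epsilon<1/2$ is unnecessary but harmless.
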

\begin{proof}

By Lemma \ref{lem:bounded rank special case} the statement holds in the bounded rank regime. 
 We may therefore assume that $G= Cl_n(\mathbb{F}_q)$ is a classical finite simple group. Let $c'$ be sufficiently small, let $c = c(c')$ be sufficiently small with respect to $c'$, and 
    let $S$ be the set of characters of dimension $\ge |G|^{c'\epsilon}.$

Suppose first that $|A|>32^{1/c'\epsilon}.$
    By Theorem \ref{thm: Counting low dimensional representations} we have 
    \[
    \sum_{\chi \notin S} \chi(1)^2 \le |G|^{\epsilon /4}.
    \]
    Moreover, by Theorem \ref{thm: Larsen--Tiep corollary} we have $|\mathbb{E}_{x,y\sim A}[\chi(y^{-1}x)]| \le \frac{\chi(1)}{|A|^{2c'\epsilon}}$ for every character $\chi \in S$ provided that $c'>0$ is sufficiently small. 
    By Lemma \ref{lem:from character theory to growth} we now obtain that 
    there exists $\sigma \in G$ with $|A^{\sigma} B|\ge \frac{|A|^{2c'\epsilon}|B|}{2} \ge |A|^{c'\epsilon}|B|.$ This completes the proof of the lemma for $|A|>32^{\frac{1}{c'\epsilon}}$, provided that $c\le c'$.

        Suppose now that $|A|\le 32^{\frac{1}{c'\epsilon}}.$ Then by
        Theorem \ref{thm: general LNS} we have $|A^{\sigma} B|\ge (1+c'')|B|$ for some absolute constant $c''>0.$ This completes the proof, as provided that $c$ is sufficiently small, we have $|A|^{c\epsilon}\le (1+c'')$.
\end{proof}

We are now ready to prove the Liebeck--Nikolov--Shalev conjecture for finite simple groups of Lie type.

\begin{thm}\label{thm: Lie type LNS}
    Let $G=Cl_n(\mathbb{F}_q)$. Then for every set $A$ and integer $i$ there exist $\sigma_1,\ldots, \sigma_i\in G$ with $|A^{\sigma_1}A^{\sigma_2}\cdots A^{\sigma_i}|\ge \min(A^{ci}, |G|).$
\end{thm}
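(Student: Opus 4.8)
The plan is to bootstrap Theorem~\ref{thm:skew product theorem} (the skew-product theorem, applied with $B = A^{\sigma_1}\cdots A^{\sigma_{j}}$) into an iterated growth statement, exactly the way product theorems are classically turned into covering-number bounds. Fix the absolute constant $c_0>0$ from Theorem~\ref{thm:skew product theorem} and also the constant $c_1$ from Theorem~\ref{thm: general LNS} (the $(1+c)$-growth bound); I will produce a final constant $c>0$ depending only on these. We may assume $|A|\ge 2$, since the statement is vacuous when $|A|=1$, and we may assume $i\ge 1$.

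First I would set up the iteration. Build the products $B_0 = \{1\}$, and $B_{j+1} = A^{\sigma_{j+1}} B_j$ where $\sigma_{j+1}\in G$ is chosen greedily to maximize $|A^{\sigma} B_j|$. Actually, to get the clean telescoping I would instead define $B_1 = A$ and $B_{j+1} = A^{\sigma_{j+1}}B_j$. At each stage there are two regimes. \emph{Regime 1:} $|B_j| < \min(|G|^{1-\epsilon_j}, c_0|G|)$ for a suitable $\epsilon_j$; here Theorem~\ref{thm:skew product theorem} gives $|B_{j+1}| \ge |A|^{c_0\epsilon_j}|B_j|$. \emph{Regime 2:} $|B_j|$ is already within a constant factor of $|G|$, i.e.\ $|B_j|\ge c_0|G|$; here one application of Theorem~\ref{thm: general LNS} (with $B_j$ in the role of $B$ — note $|B_j|<|G|$ forces $|B_j|<c_1|G|$ after we shrink constants, or we simply stop) pushes us to $|B_{j+1}|\ge(1+c_1)|B_j|$, and after a bounded number $O_{c_0,c_1}(1)$ of such steps we reach $B_k = G$. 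The only subtlety is the boundary band $c_0|G| \le |B_j| < |G|$ where $|G|^{1-\epsilon}$ may exceed $|B_j|$ only for tiny $\epsilon$; I would handle this by declaring that once $|B_j|\ge c_0|G|$ we switch permanently to Regime 2.

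The heart of the argument is choosing $\epsilon_j$ in Regime 1 so the exponents accumulate correctly. The natural choice is $\epsilon_j = 1 - \frac{\log|B_j|}{\log|G|}$, i.e.\ $|B_j| = |G|^{1-\epsilon_j}$, which is exactly the threshold; one takes $\epsilon_j$ slightly smaller, say $\epsilon_j = \tfrac12\bigl(1-\tfrac{\log|B_j|}{\log|G|}\bigr)$, to stay strictly below. Then $\log|B_{j+1}| \ge \log|B_j| + c_0\epsilon_j\log|A|$. Writing $t_j = \epsilon_j = 1 - \log|B_j|/\log|G|$ and $\rho = \log|A|/\log|G| \le 1$, this recursion reads $t_{j+1} \le t_j(1 - \tfrac{c_0}{2}\rho)$ — a geometric decay of the "gap" $t_j$ — which is the wrong direction for a clean $|A|^{ci}$ bound. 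So instead I would track additively: as long as $|B_j| \le |G|^{1/2}$ we have $\epsilon_j \ge 1/2$, hence $|B_{j+1}| \ge |A|^{c_0/2}|B_j|$, so $|B_j| \ge |A|^{1 + (c_0/2)(j-1)} \ge |A|^{(c_0/2)j}$ for all $j$ with $|B_{j-1}|\le|G|^{1/2}$. Once $|B_j| > |G|^{1/2} \ge |A|^{(c_0/2)j}$ — and in particular once $j$ is large enough that $|A|^{(c_0/2)j}$ would exceed $|G|^{1/2}$ — the desired bound $\min(|A|^{cj},|G|)$ is already witnessed by $|G|^{1/2}$ provided $c \le c_0/4$, EXCEPT I still need to reach all of $G$, not just $|G|^{1/2}$. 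To close that gap: from $|B_j|>|G|^{1/2}$, run the Regime-1 step with $\epsilon_j$ as above (still valid while $|B_j|<c_0|G|$); each step multiplies $|B_j|$ by at least $|A|^{c_0\epsilon_j} \ge 2^{c_0\epsilon_j}>1$, but more usefully, the number of further steps needed to cross from $|G|^{1/2}$ to $c_0|G|$ is at most $O(\log|G|/\log 2)$ — too many.

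I therefore expect the real obstacle to be the "last mile": pushing $|B_j|$ from a small power of $|G|$ up to all of $G$ with only $O(i)$ further multiplications while keeping the exponent $\propto i$. The fix is to not restart the count: since $|B_{j+1}|\ge|A|^{c_0\epsilon_j}|B_j|$ and we want $|B_k|=|G|$, observe $\sum_{j<k} c_0\epsilon_j \log|A| \ge \log|G| - \log|A|$, and because each $\epsilon_j \le 1$ we need $k - 1 \ge (\log|G|-\log|A|)/(c_0\log|A|)$, i.e.\ $k \approx \log|G|/\log|A|$ steps suffice to reach $G$ — which is precisely the Liebeck--Nikolov--Shalev count. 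Combining: for $i \ge C\log|G|/\log|A|$ we get $B_i = G$ (using Regime 2 for the final bounded segment), and for $i < C\log|G|/\log|A|$ we are in the small-$|B_j|$ range throughout (up to constants), where $|B_i| \ge |A|^{(c_0/2)i}$ as shown. In either case $|B_i| \ge \min(|A|^{ci}, |G|)$ with $c = c_0/(2C)$ for a suitable absolute $C$, which is Theorem~\ref{thm: Lie type LNS}. I would write this up by splitting on whether $i$ exceeds the covering threshold, invoking Theorem~\ref{thm:skew product theorem} repeatedly below it and a bounded number of applications of Theorem~\ref{thm: general LNS} to finish, and being careful that all parameter choices ($\epsilon_j$ strictly below the threshold, $|B_j|$ strictly below $c_0|G|$) remain legal at every step.
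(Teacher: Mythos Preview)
Your iteration up to $|G|^{1/2}$ is exactly what the paper does: while $|B_j|\le |G|^{1/2}$ one may take $\epsilon=1/2$ in Theorem~\ref{thm:skew product theorem} and obtain $|B_{j+1}|\ge |A|^{c_0/2}|B_j|$, hence $|B_j|\ge |A|^{(c_0/2)j}$. The gap is in your ``last mile''. Two things go wrong there.

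First, the telescoping argument is backwards. From $\log|B_{j+1}|\ge \log|B_j|+c_0\epsilon_j\log|A|$ you correctly get that \emph{if} $B_k=G$ then $\sum_{j<k}\epsilon_j\ge (\log|G|-\log|A|)/(c_0\log|A|)$; together with $\epsilon_j\le 1$ this is a \emph{lower} bound on $k$, not an upper bound, so it does not show that $k\approx \log|G|/\log|A|$ steps suffice. In fact the recursion you yourself wrote, $t_{j+1}\le t_j(1-\tfrac{c_0}{2}\rho)$ for the gap $t_j=1-\log|B_j|/\log|G|$, shows only geometric decay of the gap; driving $t_j$ from $1/2$ down to $O(1/\log|G|)$ costs an extra factor of roughly $\log\log|G|$ in the step count, which already breaks the bound.

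Second, and more seriously, once $|B_j|$ reaches constant density you have no tool left. Theorem~\ref{thm:skew product theorem} requires $|B|<c_0|G|$, and Theorem~\ref{thm: general LNS} requires $|B|<c_1|G|$; neither applies once $|B_j|\ge \max(c_0,c_1)|G|$, so your ``Regime~2 for the final bounded segment'' stalls at a proper subset of density bounded away from~$1$ and never reaches $B_i=G$. The paper closes this gap by invoking Theorem~\ref{thm: GLPS main theorem} (the GLPS covering result): as soon as $|B_m|\ge |G|^{1/2}$, a \emph{bounded} number $C$ of conjugates of $B_m$ already multiply out to all of $G$. Setting $c=c'/C$ with $c'=c_0/2$ then gives the dichotomy: for $i<Cm$ use the growth bound $|B_i|\ge |A|^{c'i}\ge |A|^{ci}$, and for $i\ge Cm$ the product is all of $G$. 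You should replace your Regime~2 with this single application of Theorem~\ref{thm: GLPS main theorem}.
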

\begin{proof}
    By applying Theorem \ref{thm:skew product theorem} iteratively we may sequentially find $\sigma_1,\ldots, \sigma_{m-1}$  an absolute constant, such that  \[|A^{\sigma_{m-1}}A^{\sigma_{m-2}} \ldots A|\ge |A|^{c'}|A^{\sigma_{m-2}} \ldots A|\ge \cdots \ge  |A|^{c'(m-1) + 1} \ge |A|^{c'm}\] for some absolute constant $c'>0$ until $|A^{\sigma_{m-1}} \ldots A^{\sigma_{1}}A|\ge |G|^{1/2}$. Let $m$ be the smallest such that \[|A^{\sigma_{m-1}} \ldots A^{\sigma_1}A|\ge |G|^{1/2}.\]
    
     By Theorem \ref{thm: GLPS main theorem} there exists an absolute contant $C>0$, such that there are $C$ conjugates of the set $A^{\sigma_{m-1}} \ldots A^{\sigma_1}A$ whose product is $G$. Let $c = c'/C.$ Then for $i\le Ci$ there are $i/M$ conjugates of $A$ whose product has size $\ge |A|^{c' i/m} = |A|^{ci}$. Now for $i\ge Cm$ there are $i$ conjugates of $A$ whose product is $G$. This completes the proof of the theorem. 
\end{proof}

\section{Alternating groups}

We now move on to proving the Liebeck--Nikolov--Shalev theorem for alternating groups.
\subsection{Growth or small support} 
In this section we prove the following lemma that yields a skew product theorem for $A$ of the form $|A^{\sigma} B|> |A|^{c}|B|$ for some absolute constant $c>0$, unless a shift $a^{-1}A$ contains various elements of small support. Our proof makes use of Lemma \ref{lem:from character theory to growth} and the character bounds in Theorem \ref{thm:Schlage-Puchta A_n}.

\begin{lem}\label{lem: growth or small support}
    For every $\epsilon>0$ there exists $c>0$, such that the following holds. Let $G=A_n$ be the alternating group, and let $A\subseteq G$. Then either there exists $a\in A$, such that the set $a^{-1}A$ contains at least $|A|^{0.9}$ elements of support size at most $\epsilon \log |A|$, or for all sets $B\subseteq G$ with $|B|<|G|^{1-\epsilon}$, there exists $\sigma \in G$ with 
    \[
    |A^{\sigma} B|\ge |A|^c |B|.
    \]
\end{lem}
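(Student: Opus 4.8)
The strategy is to apply Lemma~\ref{lem:from character theory to growth} with $G = A_n$, which reduces matters to controlling $|\mathbb{E}[\chi(y^{-1}x)]|$ for random $x,y \sim A$ over an appropriate set $S$ of irreducible characters, together with the tail bound $\sum_{\chi \notin S}\chi(1)^2 < |G|^{\epsilon'}$ for a suitable $\epsilon' \ll \epsilon$. For the tail, I would take $S$ to be the set of irreducible characters of dimension at least $D := |A|^{\delta}$ for a small $\delta = \delta(\epsilon)$ to be optimised; Lemma~\ref{lem:Liebeck-shalev zeta} (the Liebeck--Shalev zeta estimate for $A_n$) gives that the number of characters of dimension $\le D$ is at most $(1+\epsilon')D^{1+\epsilon'}$, so $\sum_{\chi \notin S}\chi(1)^2 \le (1+\epsilon')D^{3+\epsilon'} = |A|^{O(\delta)}$, which is $\le |G|^{\epsilon'}$ as long as $|A|$ is not too large relative to $|G|$ — but note $|A| \le |G|^{?}$ need not hold, so I must instead choose $\delta$ small enough and use $|A| \le |G|$; actually since we may compare against $|G|^{\epsilon'}$ we only need $D^{3+\epsilon'} \le |G|^{\epsilon'}$, i.e. $|A|^{\delta(3+\epsilon')}\le |G|^{\epsilon'}$, which forces $\delta$ to depend on the ratio $\log|A|/\log|G|$. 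To avoid this, I would instead not insist $S$ be dimension-bounded but rather cap $D$ at $|G|^{\epsilon'/4}$ and set $S = \{\chi : \chi(1) \ge |G|^{\epsilon'/4}\}$ as in the Lie-type argument; then $\sum_{\chi\notin S}\chi(1)^2 \le (1+\epsilon')|G|^{\epsilon'/2} \le |G|^{\epsilon'}$ directly. With $K$ chosen as $|A|^c$ we also need $K < |G|^{\epsilon/2}$, which again is automatic once $c$ is small since $|A|\le|G|$.

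The heart of the matter is the character estimate: for $\chi \in S$ (so $\chi(1) \ge |G|^{\epsilon'/4}$, hence $n$ is large and $\chi$ is ``large-dimensional''), show $|\mathbb{E}_{x,y\sim A}[\chi(y^{-1}x)]| \le \chi(1)/|A|^c$ \emph{unless} the stated structural alternative holds. Fix $a \in A$ and write $g = a^{-1}x$, $h = a^{-1}y$ so that $y^{-1}x = h^{-1}g$ with $g,h$ uniform on $a^{-1}A$; by replacing $A$ with $a^{-1}A$ we may assume $1 \in A$ and bound $|\mathbb{E}_{g,h\sim A}[\chi(h^{-1}g)]|$. Using $|\chi(h^{-1}g)| \le \chi(1)$ always, and the Müller--Schlage-Puchta bound (Theorem~\ref{thm:Schlage-Puchta A_n}) $|\chi(\tau)| \le \chi(1)^{1 - \frac{c_0 \,s(\tau)}{n\log n}}$ where $s(\tau)$ is the support size of $\tau$, I split according to whether $h^{-1}g$ has support size $\le \epsilon \log|A|$ or not. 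On the ``large support'' part, each term contributes at most $\chi(1)^{1 - \frac{c_0 \epsilon \log|A|}{n \log n}}$; since $\chi(1) \ge |G|^{\epsilon'/4} = (n!/2)^{\epsilon'/4} \ge n^{\Omega(\epsilon' n)}$, we get $\chi(1)^{\frac{c_0\epsilon\log|A|}{n\log n}} \ge |A|^{\Omega(\epsilon'\epsilon)}$, so this part is $\le \chi(1)/|A|^c$ with $c = c(\epsilon,\epsilon')$. The remaining ``small support'' part contributes at most $\chi(1) \cdot \Pr_{g,h\sim A}[\,s(h^{-1}g) \le \epsilon\log|A|\,]$, so it suffices to show this probability is $\le |A|^{-c}$ unless the structural alternative holds.

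It remains to analyse $\Pr_{g,h \sim A}[s(h^{-1}g) \le \epsilon \log |A|]$ where $A \subseteq A_n$ with $1 \in A$. Writing $A_{\le t}(h) := \{g \in A : s(h^{-1}g) \le t\}$ with $t = \epsilon\log|A|$, the probability equals $\mathbb{E}_{h\sim A}\big[|A_{\le t}(h)|\big]/|A|$. If this exceeds $|A|^{-0.1}$, then by averaging there exists $h \in A$ with $|A_{\le t}(h)| \ge |A|^{0.9}$; taking $a := h$, the set $a^{-1}A$ contains $g' := a^{-1}g$ for each such $g$, and $s(g') = s(a^{-1}g) = s(h^{-1}g) \le t = \epsilon\log|A|$ — there are $\ge |A|^{0.9}$ of them, which is exactly the structural alternative. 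Otherwise the small-support contribution is $\le \chi(1)|A|^{-0.1}$, and combining the two parts gives $|\mathbb{E}_{x,y\sim A}[\chi(y^{-1}x)]| \le \chi(1)/|A|^c$ for a suitable absolute-ish $c = c(\epsilon) > 0$ (after also halving to absorb constants and using that $|A|$ may be assumed large, else the conclusion $|A^\sigma B| \ge (1+c)|B|$ type statement follows from Theorem~\ref{thm: general LNS}-style arguments — though for $A_n$ one would instead just note small $|A|$ makes $|A|^c$ close to $1$). Feeding this into Lemma~\ref{lem:from character theory to growth} with $K = |A|^c$, $\epsilon \leftarrow 2\epsilon'$ yields $\sigma \in G$ with $|A^\sigma B| > \frac{|A|^c|B|}{1+|G|^{-\epsilon'/4}} \ge |A|^{c/2}|B|$ for all $B$ with $|B| < |G|^{1 - 2\epsilon'} $; choosing $\epsilon'$ a small multiple of $\epsilon$ (and shrinking $c$) covers $|B| < |G|^{1-\epsilon}$, completing the proof.

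\textbf{Main obstacle.} The delicate point is the interplay of three parameters: the dimension threshold $|G|^{\epsilon'/4}$ governing the tail sum, the support threshold $\epsilon\log|A|$ in the structural alternative, and the exponent $0.9$ in ``$|A|^{0.9}$ elements of small support''. One must verify that on the large-support part the gain $\chi(1)^{c_0\epsilon\log|A|/(n\log n)}$ genuinely beats $|A|^c$ — this uses $\log\chi(1) = \Omega(\epsilon' n\log n)$, which is where the dimension lower bound for $\chi \in S$ is essential, and it is why the threshold must be a \emph{power of $|G|$} (equivalently $n^{\Omega(n)}$) rather than merely super-polynomial in $n$. The bookkeeping to make all constants depend only on $\epsilon$ (not on $n$ or on $|A|/|G|$), and to handle the regime where $|A|$ is bounded, is routine but needs care.
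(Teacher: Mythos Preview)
Your proposal is correct and follows essentially the same argument as the paper: take $S$ to be the characters of dimension at least $|G|^{\Theta(\epsilon)}$, control the tail sum via the Liebeck--Shalev zeta estimate (Lemma~\ref{lem:Liebeck-shalev zeta}), split $\mathbb{E}_{x,y\sim A}[\chi(y^{-1}x)]$ according to whether $y^{-1}x$ has support $\le \epsilon\log|A|$, apply the M\"uller--Schlage-Puchta bound (Theorem~\ref{thm:Schlage-Puchta A_n}) on the large-support part, and observe that the small-support probability exceeding $|A|^{-0.1}$ forces the structural alternative by averaging over $y$; then feed into Lemma~\ref{lem:from character theory to growth}. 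The paper proceeds identically (with threshold $|G|^{\epsilon/10}$ and the same $|A|^{-0.1}$ cutoff), and your ``main obstacle'' correctly isolates the reason the dimension threshold must be a power of $|G|$. Your translation step ``fix $a\in A$ and replace $A$ by $a^{-1}A$'' is unnecessary---the paper simply notes that for each fixed $y\in A$ the hypothesis bounds $|\{x\in A: s(y^{-1}x)\le m\}|<|A|^{0.9}$ directly---but this is cosmetic.
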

\begin{proof}
Write $G = A_n$.
By decreasing $c$ if necessary we may assume that $n$ is sufficiently large as a function of $\epsilon$. 
Suppose that $a^{-1}A$ contains $< |A|^{0.9}$ elements of support size $m$ for each $a\in A$. We show that  $|A^{\sigma} B|\ge |A|^c|B|$ for some $\sigma \in G$. 

Let $S$ be the set of characters $\chi$ of dimension $\ge |G|^{\epsilon/10}$.  Then $\sum_{\chi\notin S} \chi(1)^2 \le |G|^{\epsilon/2},$ provided that $n$ is sufficiently large, by Lemma \ref{lem:Liebeck-shalev zeta}.

Now let $\chi \in S$. Choose $x,y\sim A$ uniformly and independently. 
 Let $E$ be the event that $y^{-1}x$ has support size $<m.$ Then by hypothesis we have \[\Pr[ E ] < \frac{|A|^{0.9}}{|A|}=|A|^{-0.1}.\] When the event $E$ is satisfied we may simply use the trivial bound $|\chi(y^{-1}x)|\le \chi(1).$ On the other hand, whenever the event $E$ is not satisfies and $y^{-1}x$ has support $\ge m$ we may apply Theorem \ref{thm:Schlage-Puchta A_n} to obtain the improved bound 
 \[
 |\chi(y^{-1}x)|\le \chi(1)^{1- c'\frac{\epsilon\log |A|}{\log |G|}}
 \]
 for an absolute constant $c'>0$. Thus, 
 \[
 \frac{|\chi(y^{-1}x)|}{\chi(1)} \le |G|^{-\epsilon c'/10 \frac{\log |A|}{\log |G|}} =|A|^{-c'\epsilon /10}.
 \]
    By Lemma \ref{lem:from character theory to growth} (with $K = |A|^{c'\epsilon/10}$) for every set $B$ of size $<|G|^{1-\epsilon},$ there exists $\sigma \in G$ with  
    \[
    |A^{\sigma} B|\ge \frac{|B||A|^{c'\epsilon/10}}{1 + |G|^{-\epsilon/4}} \ge |A|^{c}|B|,
    \]
    provided that $c$ is sufficiently small as a function of $\epsilon,c',$ which completes the proof of the lemma.   
\end{proof}

\subsection{Growth for sets of elements of small support} 
In this section we obtain growth of the form $|A^{\sigma}B|\ge |A|^{c}|B|$ for sets of elements of small support. 

Note that $\sigma^{A_n} = \sigma^{S_n}$ if and only if the centralizer of $\sigma$ contains an odd permutation. In particular, for elements $\sigma$ of support size $\le n-2$, we have $\sigma^{A_n} =\sigma^{S_n}$, as such a permutation $\sigma$ has a transposition in its centralizer. 
Let $\sigma, \tau\in A_n$ be permutations having disjoint support, while having their support sizes sum up to at most $n-2$, then we set the \emph{direct sum of their conjugacy classes} to be given by \[\sigma^{A_n} \oplus \tau^{A_n} := (\sigma \tau)^{A_n}.\]

We now show that for subsets of conjugacy classes $A\subseteq \sigma_1^{A_n}$, $B\subseteq \sigma_2^{A_n},$ there exists $\sigma$, such that $A^{\sigma}B$ contains various elements in the conjugacy class  $\sigma_1^{A_n}\oplus \sigma_2^{A_n}$. 
We establish that by a second moment argument.

\begin{lem}\label{lem: skew product theorem for elements of small support}
There exists $C>0$, such that the following holds. Let $m,r\le n/4$, let $\sigma_1 \in A_n$ be of support $m$ and $\sigma_2 \in A_n$ be of support $r$. Suppose that $A\subseteq \sigma_1^{A_n}$ satisfies $|A| \ge C^m$ and that $B\subseteq \sigma_2^{A_n}$. Then \[\left|A^{\sigma} B \cap \left(\sigma_1^{A_n}\oplus \sigma_2^{A_n} \right)\right| \ge |A|^{1/3}|B|\] for some $\sigma \in A_n.$ 
\end{lem}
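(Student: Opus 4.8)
The plan is a second‑moment argument over a uniformly random conjugator $\sigma\in A_n$, counting how often elements of $D:=\sigma_1^{A_n}\oplus\sigma_2^{A_n}$ are realised as products $a^\sigma b$ with $a\in A$, $b\in B$. Since $m+r\le n/2\le n-2$, the class $D$ coincides with the corresponding $S_n$‑class, as do $\sigma_1^{A_n}$ and $\sigma_2^{A_n}$. The elementary geometric input is that for $a$ of cycle type $\sigma_1$ and $b$ of cycle type $\sigma_2$ one has $ab\in D$ if and only if $\mathrm{supp}(a)\cap\mathrm{supp}(b)=\emptyset$: disjoint supports make $ab$ have cycle type $\sigma_1\oplus\sigma_2$ and support exactly $m+r$, while conversely $|\mathrm{supp}(ab)|\le m+r-|\mathrm{supp}(a)\cap\mathrm{supp}(b)|$ and membership in $D$ forces $|\mathrm{supp}(ab)|=m+r$.

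For $\sigma\in A_n$ and $d\in D$ put $N_\sigma(d)=\#\{(a,b)\in A\times B:\ a^\sigma b=d\}$, $W_\sigma=\sum_{d}N_\sigma(d)$, and $V_\sigma=\sum_{d}N_\sigma(d)^2$. Cauchy--Schwarz gives $|A^\sigma B\cap D|\ge W_\sigma^2/V_\sigma$, and since $\mathbb{E}_\sigma[W_\sigma^2]\ge(\mathbb{E}_\sigma W_\sigma)^2$, it suffices to prove
\[
(\mathbb{E}_\sigma W_\sigma)^2>|A|^{1/3}|B|\cdot\mathbb{E}_\sigma V_\sigma ,
\]
for then some $\sigma$ has $W_\sigma^2>|A|^{1/3}|B|\,V_\sigma$, i.e. $|A^\sigma B\cap D|>|A|^{1/3}|B|$. \textbf{First moment:} $W_\sigma$ counts pairs $(a,b)$ with $\mathrm{supp}(a^\sigma)\cap\mathrm{supp}(b)=\emptyset$, and since $A_n$ is $(n-2)$‑transitive the set $\mathrm{supp}(a^\sigma)=\sigma^{-1}\mathrm{supp}(a)$ is a uniformly random $m$‑subset of $[n]$; hence $\Pr_\sigma[a^\sigma b\in D]=\binom{n-r}{m}\big/\binom{n}{m}\ge(2/3)^m$ using $m,r\le n/4$, so $\mathbb{E}_\sigma W_\sigma\ge(2/3)^m|A||B|$.

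\textbf{Second moment:} write $V_\sigma=W_\sigma+\mathrm{Off}_\sigma$, where $\mathrm{Off}_\sigma$ counts pairs $((a_1,b_1),(a_2,b_2))\in(A\times B)^2$ with $(a_1,b_1)\ne(a_2,b_2)$ and $a_1^\sigma b_1=a_2^\sigma b_2\in D$. Since $\mathbb{E}_\sigma W_\sigma\le|A||B|$ and $|A|\ge C^m$ (which for $C$ large forces $|A||B|\le\tfrac12(2/3)^{2m}|A|^{5/3}|B|$), it remains to show $\mathbb{E}_\sigma\mathrm{Off}_\sigma\le\tfrac12(2/3)^{2m}|A|^{5/3}|B|$. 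For a contributing tuple, $a_1^\sigma b_1=a_2^\sigma b_2$ rearranges to $(a_2^{-1}a_1)^\sigma=b_2b_1^{-1}$; one checks that $a_1\ne a_2$, $b_1\ne b_2$, and — using that both products lie in $D$, so $\mathrm{supp}(a_i^\sigma)$ and $\mathrm{supp}(b_i)$ are disjoint — that $\mathrm{supp}(a_1^\sigma)$ and $\mathrm{supp}(a_2^\sigma)$ are two distinct unions of cycles of the common value $d$, each of cycle type $\sigma_1$. I would then bound $\mathbb{E}_\sigma\mathrm{Off}_\sigma=\sum_{(a_1,b_1,a_2,b_2)}\Pr_\sigma[(a_2^{-1}a_1)^\sigma=b_2b_1^{-1}\text{ and }a_1^\sigma b_1\in D]$, estimating each probability by $1/|u^{A_n}|$ with $u=a_2^{-1}a_1\ne1$ of support $s$, where $3\le s\le 2m$ so $|u^{A_n}|\gtrsim n^3$ (and in general grows like $n^{\Omega(s)}$), and then counting the admissible quadruples while \emph{retaining} the constraint $a_1^\sigma b_1\in D$, which restricts how $\mathrm{supp}(b_2b_1^{-1})$ can meet $\mathrm{supp}(a_i^\sigma)$ and $\mathrm{supp}(b_i)$; the combinatorial factors $\prod_\ell\binom{\alpha_\ell+\beta_\ell}{\alpha_\ell}$ (with $\alpha,\beta$ the cycle structures of $\sigma_1,\sigma_2$), the $(2/3)^{-m}$, and the polynomial‑in‑$n$ losses are all absorbed into $|A|\ge C^m$.

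The main obstacle is precisely this last count, and two features make it delicate. First, a single $d\in D$ has $g_0:=\prod_\ell\binom{\alpha_\ell+\beta_\ell}{\alpha_\ell}$ distinct cycle‑type‑$\sigma_1$ sub‑decompositions, and $g_0$ can be as large as $n^{\Omega(m)}$, so one cannot merely divide $W_\sigma$ by the multiplicity; instead one must show that \emph{pairs} of distinct admissible sub‑decompositions of a common $d$ are rare, which is exactly where $|A|\ge C^m$ is used, since two such sub‑decompositions force two distinct elements of $A^\sigma$ to appear simultaneously as cycle bundles of $d$ — a rigid coincidence when $A$ is small relative to the number of places such a bundle could sit. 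Second, the membership constraint $a_1^\sigma b_1\in D$ must be kept throughout: replacing it by the weaker $(a_2^{-1}a_1)^\sigma=b_2b_1^{-1}$ alone loses a factor polynomial in $n$, which is unaffordable when $r$, hence $|B|$, is large. Once the off‑diagonal estimate is established, combining it with the first‑moment bound yields the displayed inequality and hence the lemma.
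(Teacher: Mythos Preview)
Your overall framework is correct and close to the paper's: a second–moment / Paley--Zygmund argument over a random conjugator, with the first moment computed exactly as you do. You also correctly identify the key geometric fact that a collision $a_1^\sigma b_1=a_2^\sigma b_2=d\in D$ forces both $a_1^\sigma$ and $a_2^\sigma$ to sit inside $d$ as unions of its cycles. What is missing is that you do not \emph{use} this observation to bound the second moment; instead you rewrite the collision as $(a_2^{-1}a_1)^\sigma=b_2b_1^{-1}$ and propose to control it via $1/|u^{A_n}|$ together with a count of admissible quadruples $(a_1,a_2,b_1,b_2)$. That route keeps $B$ in the picture, and the count $\#\{(b_1,b_2)\in B^2: b_2b_1^{-1}\in u^{A_n}\}$ depends on the internal structure of $B$, which you have no control over; this is precisely the ``delicate'' obstacle you flag, and you do not resolve it.

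The paper bypasses this completely. Writing $d=a_1^\sigma b_1$, one simply bounds the indicator $\mathbf{1}[\exists b_2:\,a_2^\sigma b_2=d]$ by $\mathbf{1}[a_2^\sigma\le d]$ (your own observation), thereby eliminating $b_2$ and hence any dependence on the structure of $B$. The second moment then reduces to
\[
\sum_{a_1,a_2\in A}\sum_{b_1\in B}\Pr\bigl[a_1^\sigma b_1\in D\text{ and }a_2^\sigma\le a_1^\sigma b_1\bigr],
\]
and after conditioning on the value of $a_1^\sigma$ (so that $\sigma$ ranges over a coset of the centraliser $C_{a_1}$, and $d=a_1^\sigma b_1$ is fixed), the inner probability is bounded by $(n-2m)^{-\ell/2}$, where $2\ell$ is the number of coordinates on which $a_1$ and $a_2$ disagree. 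This is a clean coset computation (the paper's Claim~6.3): the cycles of $a_2$ outside $\mathrm{supp}(a_1)$ must land on cycles of $d$, and the freedom in $\sigma$ over $C_{a_1}$ makes this unlikely. One then stratifies $A$ by $\ell$ and uses $|\{a_2: a_2\text{ disagrees with }a_1\text{ on }2\ell\text{ places}\}|\le\min(|A|,\,2^m n^\ell)$ to get $\sum_{a_2}(n-2m)^{-\ell/2}\lesssim |A|^{1/2}2^{O(m)}\le|A|^{2/3}$. The $g_0$ issue you raise never arises, because one never enumerates sub-decompositions of $d$ directly. (The paper also introduces a second independent conjugator $\pi$ on $B$ and applies Paley--Zygmund for each fixed $x\in D$; this is a pleasant repackaging---since $|A^\tau B^\pi\cap D|=|A^{\tau\pi^{-1}}B\cap D|$ it is equivalent to a single conjugator---but the decisive step is the same ``forget $b_2$'' bound above.)
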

\begin{proof}
    Choose randomly and independently $\tau,\pi \sim A_n$. 
    For each $a\in A,b\in B$ and $x\in \sigma_1^{A_n}\oplus \sigma_2^{A_n}$ set $X_{a,b,x}$ be the indicator of the event $a^{\tau} b^{\pi} = x.$
    We write $X_x = \sum_{a\in A,b\in B} X_{a,b,x}$
    and let $E_x$ be the event that $X_x>0.$ In words, $E_x$ is the indicator for the event $x\in A^{\tau}B^{\pi}$ and $X_{x}$ counts how many times $x$ can be represented as a product of elements in $A^{\tau}$ and $B^{\pi}.$  
    
    We will show the inequality 
    \[\mathbb{E}_{\tau,\pi}[|A^{\tau}B^{\pi} \cap \sigma_1^{A_n}\oplus \sigma_2^{A_n}|] = \sum_{x\in \sigma_1^{A_n}\oplus \sigma_2^{A_n}} \Pr_{\tau, \pi}[E_x] > |A|^{1/3}|B|.\]
    By the first moment method this will imply that for some $\tau,\pi \in A_n$ we have \[| A^{\tau} B^{\pi} \cap \sigma_1^{A_n}\oplus \sigma_2^{A_n}| \ge |A|^{1/3}|B|.\] This will complete the proof with $\sigma =\tau \pi^{-1}$, as conjugation by $\pi^{-1}$ preserves sizes.
    
    Fix $x\in \sigma_1^{A_n}\oplus \sigma_2^{A_n}.$ Our proof will be complete once we show that \[\Pr_{\tau, \pi}[E_x] > \frac{|A|^{1/3}|B|}{|\sigma_1^{A_n}\oplus \sigma_2^{A_n}|}.\]
    To accomplish that we lower bound $\Pr[E_x]$ via the Payley--Zigmund inequality 
    \[
        \Pr[E_x] \ge \frac{\mathbb{E}^2[X_x]}{\mathbb{E}[X^2_x].}
    \]    
    Let us start by computing \[\mathbb{E}[X_x] =\sum_{a\in A,b\in B} \mathbb{E}[X_{x,a,b}].\] Let $a\in A,b\in B$. By symmetry, once conditioning that $a^{\sigma}$ and $b^{\tau}$ have disjoint supports the distribution of $ a^{\sigma}b^{\tau}$ is uniform in the conjugacy class $\sigma_1^{A_n}\oplus \sigma_2^{A_n}.$ Now as the supports of $a^{\sigma}$ and $b^{\tau}$ are evenly distributed in $\binom{[n]}{m}$ and $\binom{[n]}{r}$ respectively, the probability that the supports are disjoint is $\frac{\binom{n-r}{m}}{\binom{n}{m}}$. We therefore have \[\mathbb{E}[X_{a,b,x}] = \frac{\binom{n-r}{m}}{\binom{n}{m}|\sigma_1^{A_n}\oplus \sigma_2^{A_n}|}.\]
    By linearity of expectation we obtain that \[\mathbb{E}[X_x] = \frac{\binom{n-r}{m}|A||B|}{\binom{n}{m}|\sigma_1^{A_n}\oplus \sigma_2^{A_n}|}.\]

    We now move on to upper bounding 
    $\mathbb{E}[X_x^2].$
    For $a\in A$ let 
    \[Y_{a,x} = \sum_{b\in B}X_{a,b,x}\] The random variable $Y_{a,x}$ is the indicator for the event $x\in a^{\tau}B^{\pi}$.
    By linearity of expectations we have 
    \[
    \mathbb{E}[X_x^2] = \sum_{a,a'\in A,b\in B} \mathbb{E}[X_{a,b,x}Y_{a',x}]
    \]
    Let $a,a'\in A,b\in B$. The number of coordinates $i$ for which $a(i)\ne a'(i)$ is always an even number. Let us denote this number by $2\ell$.  We define a partial ordering $\le$, where  $\tau_1 \le \tau_2$ if the support of $\tau_1$ is contained in the support of $\tau_2$, and $\tau_1$ and $\tau_2$ agree on the support of $\tau_1$.  
    
    The random variable $Y_{a',x}$, which takes either the value 1 or 0, can take the value 1 only if $ a'{^\sigma} \le x$. We may therefore upper bound $\mathbb{E}[X_{a,b,x}Y_{a',x}]$ 
    by the probability that $a^{\sigma}  b^{\tau} = x$ times the conditional probability that $a '{^\sigma} \le x$
    given that $a^{\sigma}b^{\tau} = x$. 
    In Claim \ref{claim: technical upper bound on the probability} we show that this happens with probability at most 
    \(\left(\frac{1}{n-2m}\right)^{\ell/2}\) when $a'$ agrees with $a$ on the intersection of their support and 0 otherwise.

  Let $S_{\ell}$ be the set of such elements $a'$ in $a^{A_n}$
  that disagree with $a$ on $2\ell$ coordinates while agreeing with $a$ on the intersection of their supports. Then the size of the set $S_{\ell}$ is at most $\ell! \binom{n-m}{\ell}\binom{m}{\ell} \le n^{\ell} 2^m.$ Moreover, $|S_{\ell}| \le |A|.$ 
  Combining the bounds by taking the geometric mean we obtain that \[|S_{\ell}|  \le |A|^{1/2} (n^{\ell}2^{m})^{1/2}.\]
  Hence,
  summing over the set $S_{\ell}$ of elements in $A$ that disagree with $a$ on exactly $2\ell$ coordinates we obtain two bounds: 
  The bound 
  \begin{align*}
  \sum_{a'\in S_{\ell}} \mathbb{E}[X_{a,b,x}Y_{a',x}] & \le \sum_{a'\in S_{\ell}}(n-2m)^{-\ell/2}\mathbb{E}[X_{a,b,x}] \\ & = (n-2m)^{-\ell/2} |S_{\ell}|\mathbb{E}[X_{a,b,x}] \\& \le |A|^{1/2} 2^{1.5m} \mathbb{E}[X_{a,b,x}].
  \end{align*}
  Summing this bound over all $\ell,a,b$ 
  we obtain \[\mathbb{E}[X_{x}^2] \le m2^{1.5m}|A|^{1/2} \mathbb{E}[X_x] \le |A|^{2/3}\mathbb{E}[X_x],\]
  provided that $C$ is sufficiently large.
  Applying the Payley--Zigmund inequality while plugging in our computation for $\mathbb{E}[X_x]$ yields that \[\Pr[E_x]\ge \frac{|A|^{1/3}|B|}{|\sigma_1^{A_n}\oplus \sigma_2^{A_n}|}.\]
  Summing over all $x\in \sigma_1^{A_n}\oplus \sigma_2^{A_n}$ completes the proof. 
\end{proof}

We now complete the proof of the lemma by filling out the proof of the missing claim.
\begin{claim}\label{claim: technical upper bound on the probability}
In the setting of the proof of Lemma \ref{lem: skew product theorem for elements of small support}, for every $x$ the probability that $a'{^{\sigma}}\le x$ given that $a^{\sigma}b^{\tau} = x$ is at most
    \[\left(\frac{1}{n-2m}\right)^{\ell/2}\] when 
    $a'$ and $a$ agree on the intersection of their support, and 0 otherwise. 
\end{claim}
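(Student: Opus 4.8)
The plan is to condition fully on the "source side" and analyze the remaining randomness in $\sigma$. Recall $a,a'\in A\subseteq\sigma_1^{A_n}$, so $a$ and $a'$ have the same support size $m$. If $a'$ disagrees with $a$ somewhere on the common part of their supports, then no conjugate $a'^{\sigma}$ can satisfy $a'^{\sigma}\le x$ while simultaneously $a^{\sigma}$ sits inside $x$ in the prescribed way (since $x=a^{\sigma}b^{\tau}$ forces $a^{\sigma}$ to occupy a specific sub-permutation of $x$, and $a'^{\sigma}\le x$ would then force $a^{\sigma}$ and $a'^{\sigma}$ to agree where their supports meet — contradicting the disagreement of $a,a'$ there, because conjugation by the same $\sigma$ transports the disagreement). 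This gives the "$0$ otherwise" clause; so from now on I assume $a'$ and $a$ agree on the intersection of their supports and disagree on exactly $2\ell$ coordinates, all of which lie in the symmetric difference of the two supports.

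Next I would set up the event $a^{\sigma}b^{\tau}=x$ and extract what it pins down. Conditioning on $\tau$ and on the event that $a^{\sigma}$ and $b^{\tau}$ have disjoint supports, the identity $a^{\sigma}b^{\tau}=x$ determines $b^{\tau}$ (it is the restriction of $x$ to $\mathrm{supp}(b^{\tau})$, which is the complement inside $\mathrm{supp}(x)$ of an $m$-subset) and determines $a^{\sigma}$ as the complementary sub-permutation of $x$; in particular it determines $\mathrm{supp}(a^{\sigma})$ as a specific $m$-element subset $T\subseteq\mathrm{supp}(x)$, and it determines $\sigma$ on $\mathrm{supp}(a)$ up to the (bounded) stabilizer ambiguity that does not affect which set $T$ is hit. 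The key point: conditioned on $a^{\sigma}b^{\tau}=x$, the images under $\sigma$ of the $\ell$ "new" points of $\mathrm{supp}(a')\setminus\mathrm{supp}(a)$ — there are $\ell$ of them, since $a$ and $a'$ differ on $2\ell$ coordinates and agree on the overlap, so each differing pair contributes one point outside $\mathrm{supp}(a)$... — are uniformly distributed over the $n-|\mathrm{supp}(a)\cup(\text{already fixed points})|\ge n-2m$ remaining points, chosen without replacement. For $a'^{\sigma}\le x$ to hold, each such image must land in the already-determined set $T=\mathrm{supp}(a^{\sigma})$ of size $m$, and moreover must land in the specific position dictated by the cycle structure of $a'$; bounding crudely, each of the $\ell$ images independently (without replacement only makes it smaller) must fall into a set of size at most $m\le n-2m$ out of at least $n-2m$ choices.

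Carrying this out gives a bound of the form $\prod_{j=1}^{\ell}\frac{m}{n-2m-j}\le\left(\frac{m}{n-2m}\right)^{\ell}$; since $m\le n/4$ we have $\frac{m}{n-2m}\le\frac{1}{2}$... but more usefully $\frac{m}{n-2m}\le 1$, and the claimed bound is the weaker $\left(\frac{1}{n-2m}\right)^{\ell/2}$, so it suffices to check $\frac{m}{n-2m}\le (n-2m)^{-1/2}$, i.e. $m\le (n-2m)^{1/2}$, which need not hold for all $m$ — so I would instead not throw away the matching constraint: among the $\ell$ new points, the requirement "$a'^{\sigma}\le x$" actually forces matching of at least $\ell/2$ of the $a'$-cycle-adjacencies into prescribed $x$-adjacencies, each of which costs a factor $\frac{1}{n-2m-O(m)}\le\frac{1}{n-2m}$ once the relevant anchor point is placed, yielding $\left(\frac{1}{n-2m}\right)^{\ell/2}$ after collecting the $\ge\ell/2$ independent constraints and discarding the free factors. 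The main obstacle — and the step deserving the most care — is exactly this bookkeeping: identifying precisely which $\ell/2$ conditional placements are genuinely "new" (not already implied by $a^{\sigma}b^{\tau}=x$ and by earlier placements) and arguing each contributes a factor at most $(n-2m)^{-1}$ via the without-replacement uniformity of $\sigma$ on the unfixed points; once that combinatorial structure is pinned down, the probability estimate is a routine product bound.
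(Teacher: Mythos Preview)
Your approach is essentially the paper's: condition so that the only remaining randomness in $\sigma$ is its action on $[n]\setminus\mathrm{supp}(a)$, then observe that the $\ell$ points of $\mathrm{supp}(a')\setminus\mathrm{supp}(a)$ decompose into cycles of $a'$ (this uses that $a,a'$ agree on the overlap, which forces each $a'$-cycle to lie entirely inside or entirely outside $\mathrm{supp}(a)$), and each $k$-cycle imposes $k-1$ ``adjacency'' constraints $\sigma^{-1}(i_{j+1})=x(\sigma^{-1}(i_j))$, giving $\sum(k_i-1)\ge \ell/2$ factors of at most $1/(n-2m)$. Your ``$0$ otherwise'' argument is also correct.

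The only place where the paper is sharper than your sketch is the bookkeeping you flag as the main obstacle. Rather than conditioning directly on the event $a^\sigma b^\tau=x$, the paper conditions on $\tau$ and on the value of $a^\sigma$, and then observes that the conditional law of $\sigma$ is uniform on a coset of the centraliser $C_a$; since $C_a$ contains the subgroup $H$ of even permutations fixing $\mathrm{supp}(a)$ pointwise, one may further condition on the $H$-coset and reduce (after conjugating everything by a fixed $\sigma'$) to the case $a\le x$ and $\sigma$ uniform in $H$. This makes the ``residual uniformity on $[n]\setminus\mathrm{supp}(a)$'' you need completely transparent, and the cycle-by-cycle probability estimate $\prod_i (n-2m)^{-(k_i-1)}$ then falls out without any delicate tracking of which placements are ``new''. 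Your first attempt (landing in a set of size $m$) is indeed insufficient, and you are right to discard it; your second attempt is the correct mechanism, and the paper's centraliser reduction is exactly the clean way to justify it.
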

\begin{proof}
    In fact, we prove a stronger upper bound on the conditional probability that $a'{^{\sigma}}\le x$, given an arbitrary value of $a^{\sigma}$, $\tau.$

    Since the permutation $\sigma,\tau$ are independent, the value of $\tau$ is irrelevant. Moreover, the distribution of $\sigma$ given $a_{\sigma}$ is evenly distributed in a coset $C_{a}\sigma_0$, where $C_{a}$ is the cenralizer of $a$. Now the centralizer $C_{a}$ contains the subgroup $H$ of all even permutations fixing the support of $a$ and we may condition furthor on the coset of $H$ that $\sigma$ belongs to. 
    
    To conclude, it suffices to prove that for every $\sigma'$ with $a^{\sigma'}\le x$, the  upper bound 
\(\Pr_{\sigma \sim H \sigma'}[a'{^\sigma} \le x]\) by \(\left(\frac{1}{n-2m}\right)^{\ell/2}\) holds when $a'$ and $a$ agree on the intersection of their support and that we never have $a'^{\sigma}\le x$ if $a'$ and $a$ do not agree on the intersection of their support. 

    It is enough to prove the lemma with $a$ and $a'$ replaced by  $a^{\sigma'}$, and $a'{^{\sigma'}},$ and with $\sigma'$ replaced by 1. We may therefore, assume without loss of generality $\sigma' = 1$ and therefore $a \le x.$
    
    It is now evident that $a'{^\sigma} \le x$ if and only if $a'$ agrees with $a$ on the support of $a$ and $\sigma$ sends each (non fixed point) cycle of $a'$ outside the support of $a$ to a cycle of $x$. In particular if $a',a$ disagree on the intersection of their support we never have $a'{^\sigma} \le x$.
    
    Suppose now that $a,a'$ agree on the intersection of their support. Ordering the cycles of $a'$ arbitrarily, the conditional probability that the $i$th $k_i$-cycle of $a'$ is sent to a cycle of $x$ given that this occured for the previous cycles is at most $1/(n-2m)^{k_i-1}$. As the sum $\sum_i(k_i-1) \ge \ell/2$. The probability that $a'{^{\sigma}}\le x$ is at most 
    \(\left(\frac{1}{n-2m}\right)^{\ell/2}.\)  This completes the proof of the claim. 
    \end{proof}
    
\subsection{Growth up to $e^{Cn}$}
In this section or goal is to show that for all sets $A_1,\ldots, A_i$
there exists $\sigma_1,\ldots \sigma_i\in G$ with \[|A_1^{\sigma_1} \cdots A_i^{\sigma_i}|\ge \min(|A_1|^{c}\cdots |A_i|^c, e^{Cn})
\]
for an arbitrarily large constant $C>0$ and a sufficiently small constant $c>0$.

Our idea is to combine 
Lemma \ref{lem: growth or small support} that implies that we have a skew product theorem for sets $A,B$ unless a large subset $A'\subseteq A$ is contained in a translate of a conjugacy class of permutations of small support. Lemma \ref{lem: skew product theorem for elements of small support} then complements it by implying growth for sets of elements of small support. 

In order to carry this plan out we prove the following lemma that reduces us to the case that all the sets $A_i$ are contained in a conjugacy class of small support. It is stated in greater generality for future use. 

\begin{lem}\label{lem: getting rid of the easy either case in either or situations.} 
    Let $\ell< m$ be integers, let $c,c'\in (0,1)$, and let  $\mathcal{E}$ be a collection of subsets of a finite group $G$. Suppose that for each $A_1,\ldots, A_i\in \mathcal{E}$ of size $\ge \ell^{c'}$ there exist $\sigma_1,\ldots, \sigma_i\in G$ with 
    
    \[|A_1^{\sigma_1} \cdots A_i^{\sigma_i}|\ge \min(m, |A_1|^{c}\cdots |A_i|^{c}).\]
    Suppose additionally, that for each subsets $A,B\subseteq G$, with $|A|\ge \ell,|B|\le m$ we either have 
    \[|A^{\sigma}B|>|A|^{c}|B|\] for some $\sigma\in G$ or there exists $\sigma \in G$ and $A'\in \mathcal{E}$ with $\sigma A' \subseteq A$ and $|A'|>|A|^{c'}.$ Then for all sets 
    $A_1,\ldots, A_i\subseteq G$, of size at least $\ell$, there exists $\sigma_1,\ldots, \sigma_i$ with 
    \[
        |A_1^{\sigma_1} \cdots A_i^{\sigma_i}|\ge \min(m, |A_1|^{cc'/2}\cdots |A_i|^{cc'/2})
    \]
\end{lem}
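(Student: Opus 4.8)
The plan is to partition $\{1,\dots,i\}$ into two kinds of indices, harvest growth from each kind via one of the two hypotheses, and combine the two resulting estimates through the elementary inequality $\min(m,\max(M_1,M_2))\ge\min(m,\sqrt{M_1M_2})$. \emph{Step 1 (classifying the sets).} For each $j$ I apply the second hypothesis. Either there are $A'_j\in\mathcal{E}$ and $\rho_j\in G$ with $\rho_jA'_j\subseteq A_j$ and $|A'_j|>|A_j|^{c'}$, in which case I call $j$ \emph{structured}; or, since the existence of such an $A'_j$ depends only on $A_j$ and not on $B$, for \emph{every} $B$ with $|B|\le m$ there is $\sigma$ with $|A_j^{\sigma}B|>|A_j|^c|B|$, in which case I call $j$ \emph{good}. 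Let $S$ be the set of structured indices and put $M_1=\prod_{j\in S}|A_j|^{cc'}$ and $M_2=\prod_{j\notin S}|A_j|^c$. Since $c'\le1$ one has $M_2\ge\prod_{j\notin S}|A_j|^{cc'}$, hence $M_1M_2\ge\prod_{j}|A_j|^{cc'}$ and $\sqrt{M_1M_2}\ge\prod_{j}|A_j|^{cc'/2}$. So it is enough to exhibit one choice of the $\sigma_j$ with $|A_1^{\sigma_1}\cdots A_i^{\sigma_i}|\ge\min(m,M_1)$ and a (possibly different) choice with $|A_1^{\sigma_1}\cdots A_i^{\sigma_i}|\ge\min(m,M_2)$, and then use the choice corresponding to the larger of $M_1,M_2$.

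\emph{Step 2 (the $M_2$ estimate).} I build the product from the right, choosing $\sigma_i,\sigma_{i-1},\dots,\sigma_1$ in turn and tracking $C_j:=A_j^{\sigma_j}\cdots A_i^{\sigma_i}$ (with $C_{i+1}=\{e\}$). If $j$ is good and $|C_{j+1}|\le m$, I use the good alternative with $B=C_{j+1}$ to get $|C_j|=|A_j^{\sigma_j}C_{j+1}|>|A_j|^c|C_{j+1}|$; otherwise I take $\sigma_j$ arbitrary and use only $|C_j|\ge|C_{j+1}|$. Since sizes never decrease, either the running size eventually exceeds $m$ and stays there, or every good factor contributed its $|A_j|^c$; in both cases $|C_1|=|A_1^{\sigma_1}\cdots A_i^{\sigma_i}|\ge\min(m,M_2)$.

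\emph{Step 3 (the $M_1$ estimate).} Write $S=\{j_1<\dots<j_t\}$. Each $A'_{j_a}\in\mathcal{E}$ satisfies $|A'_{j_a}|>|A_{j_a}|^{c'}\ge\ell^{c'}$, so the first hypothesis supplies $\tau_1,\dots,\tau_t$ with $\big|(A'_{j_1})^{\tau_1}\cdots(A'_{j_t})^{\tau_t}\big|\ge\min\!\big(m,\prod_a|A'_{j_a}|^c\big)\ge\min(m,M_1)$. For each good $j$ I set $\sigma_j=e$ and fix an arbitrary $h_j\in A_j$, so every maximal block of good factors contains a single element; and for structured $j_a$ the inclusion $\rho_{j_a}A'_{j_a}\subseteq A_{j_a}$ gives $A_{j_a}^{\sigma_{j_a}}\supseteq\rho_{j_a}^{\sigma_{j_a}}(A'_{j_a})^{\sigma_{j_a}}$. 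Hence $A_1^{\sigma_1}\cdots A_i^{\sigma_i}$ contains a set of the form $f_0(A'_{j_1})^{\sigma_{j_1}}f_1(A'_{j_2})^{\sigma_{j_2}}f_2\cdots(A'_{j_t})^{\sigma_{j_t}}f_t$, where each $f_a$ is a single element assembled from the $h_j$'s and the $\rho_{j_a}$'s, with $f_t$ involving none of the remaining free parameters and $f_a$ (for $a<t$) involving only $\sigma_{j_{a+1}}$. Sliding every $f_a$ to the left, using $|gXg^{-1}|=|X|$, rewrites this set, up to a single left translation, as $(A'_{j_1})^{\sigma_{j_1}f_1f_2\cdots f_t}(A'_{j_2})^{\sigma_{j_2}f_2\cdots f_t}\cdots(A'_{j_t})^{\sigma_{j_t}f_t}$. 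I then solve for $\sigma_{j_t},\sigma_{j_{t-1}},\dots,\sigma_{j_1}$ in that order so that $\sigma_{j_a}f_af_{a+1}\cdots f_t=\tau_a$, which is legitimate because at the $a$-th stage the elements $f_a,\dots,f_t$ have already been determined. This yields $|A_1^{\sigma_1}\cdots A_i^{\sigma_i}|\ge\big|(A'_{j_1})^{\tau_1}\cdots(A'_{j_t})^{\tau_t}\big|\ge\min(m,M_1)$, which together with Steps 1 and 2 completes the argument.

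\emph{Main obstacle.} Steps 1 and 2 are routine. The one genuinely delicate point is the bookkeeping in Step 3: verifying that the single elements introduced by the good blocks and by the translations $\rho_{j_a}$ can be absorbed without disturbing the conjugators $\tau_a$ produced by the first hypothesis — that is, that the triangular dependence of $f_a$ on $\sigma_{j_{a+1}},\dots,\sigma_{j_t}$ really does let one solve the resulting system one equation at a time. Beyond this, the proof uses only the facts $|XY|\ge|X|$, invariance of cardinality under conjugation, and monotonicity of set products under inclusion.
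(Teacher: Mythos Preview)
Your proof is correct and follows essentially the same strategy as the paper: split the indices into ``structured'' (a translate of some $A'_j\in\mathcal{E}$ sits inside $A_j$) and ``good'' (growth holds against every $B$), obtain one lower bound from each class via the corresponding hypothesis, and combine the two via $\min(m,\max(M_1,M_2))\ge\min(m,\sqrt{M_1M_2})$.

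The only methodological difference is in your Step~3 bookkeeping. The paper first translates so that $1\in A_j$ for every $j$; then for any choice of conjugators the full product $A_1^{\sigma_1}\cdots A_i^{\sigma_i}$ trivially contains the subproduct over any subset of the indices (taking the identity in the omitted factors), and the remaining translations $\tau_j A'_j\subseteq A_j$ are handled by the one-line observation that a product of translates of sets equals a translate of a product of conjugates. This replaces your explicit triangular system $\sigma_{j_a}f_a\cdots f_t=\tau_a$ by a single normalization. Your argument is equally valid, just slightly more laborious; the paper's normalization is worth knowing as it makes such absorption arguments essentially automatic.
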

\begin{proof} 

Let $A_1,\ldots A_i\subseteq G$. Our goal is to show the existence of  $\sigma_1,\ldots, \sigma_i$ with 
    \[
        |A_1^{\sigma_1} \cdots A_i^{\sigma_i}|\ge \min(m, |A_1|^{cc'/2}\cdots |A_i|^{cc'/2})
    \]

Now this holds for the sets $A_i$ if and only if it holds for translates of the sets $A_i$. Thus, without loss of generality we may assume that $ 1\in A_i$ for all $i$. Let $S$ be the set of coordinates $i$ with $A_i\in \mathcal{E}$ and let $T$ be its complement. 

Assuming, as we may, that $1\in A_i$ for all $i$ we have 
\[
    |A_1^{\sigma_1}A_2^{\sigma_2} \cdots A_i^{\sigma_i}|\ge |\prod_{i\in S}A_i^{\sigma_i}|,
\]
for all choices of $\sigma_1,\ldots, \sigma_i$,
(where if $S = \{i_1,\ldots, i_r\}$ with $i_1<i_2<\cdots <i_r$, then  $\prod_{i\in S}A_i$ denotes $A_{i_1}A_{i_2}\cdots A_{i_r}).$ Let $\tau_i A_i'\subseteq A_i$ be with $A_i'\in \mathcal{E}$ of size $\ge |A_i|^{c'}.$ Then by hypothesis there exist 
$\{\sigma_i'\}_{i\in T}$, which then correspond to \( \{\sigma_i \}_{i\in T},\)  with 
\begin{equation}\label{eq:prod i in s}
\min(m, |A_1|^{cc'}\cdots |A_i|^{cc'})
 \le \left|\prod_{i\in T} A'_{i}{^{\sigma'_i}}\right|
= \left| \prod_{i\in S}(\tau_i A_i')^{\sigma_i} \right|
\le \left| \prod_{i\in S}(A_i)^{\sigma_i} \right|, 
\end{equation}
where the equality used the fact that a product of translates of certain sets can be alternatively expressed as a translate of the product of conjugates of the sets, which therefore has the same size as the product of the conjugates of the sets, with the translation ommited. 

Similarly,  
\[
    |A_1^{\sigma_1}A_2^{\sigma_2} \cdots A_i^{\sigma_i}|\ge |\prod_{i\in T}A_i^{\sigma_i}|.
\]
for every choice of $\sigma_1,\ldots, \sigma_i.$
By applying the hypothesis iteratively, there exists $\sigma_1,\ldots ,\sigma_i$ with 
\begin{equation}\label{eq: prod i in T}
\left|\prod_{i\in T}A_i^{\sigma_i}\right|\ge \min(m, \prod_{i\in T}|A_i|^{c}) \ge \min(m, \prod_{i\in T}|A_i|^{cc'}).
\end{equation}

Combining the inequalities \eqref{eq:prod i in s} and \eqref{eq: prod i in T} by taking their geometric mean completes the proof. 
\end{proof}

\begin{lem}\label{lem: always growing small sets alternating groups} For every $C>1$ there exists $n_0,c>0$, such that the following holds. Let  $n>n_0$, and let  $G$ be the alternating group $A_n$. Suppose that $A_1,\ldots, A_i \subseteq G$. Then there exists $\sigma_1,\ldots \sigma_i\in G$ with \[|A_1^{\sigma_1} \cdots A_i^{\sigma_i}|\ge \min(|A_1|^{c}\cdots |A_i|^c, e^{Cn}).
\]
\end{lem}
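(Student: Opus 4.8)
The plan is to prove Lemma~\ref{lem: always growing small sets alternating groups} by combining the ``growth or small support'' dichotomy of Lemma~\ref{lem: growth or small support} with the second-moment growth estimate for sets of elements of small support (Lemma~\ref{lem: skew product theorem for elements of small support}), glued together by the reduction lemma (Lemma~\ref{lem: getting rid of the easy either case in either or situations.}). Throughout, $G = A_n$ with $n$ large, and we fix the target $m = e^{Cn}$. Observe first that, up to adjusting $c$, we may assume every $|A_j|$ is at least some large absolute constant, since small sets only contribute a bounded factor and Theorem~\ref{thm: general LNS}-type arguments (or rather the alternating-group analogue supplied by Lemma~\ref{lem: growth or small support} itself) handle the trivial regime.

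\medskip

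First I would set up the family $\mathcal{E}$ to which Lemma~\ref{lem: getting rid of the easy either case in either or situations.} is applied. Take $\ell$ to be a suitable power of $e^{n}$ (or rather choose $\epsilon$ so that $\ell = |A|$-thresholds match), and let $\mathcal{E}$ be the collection of subsets $A' \subseteq G$ such that $A'$ is contained in a single conjugacy class $\sigma^{A_n}$ with $\sigma$ of support size at most $\epsilon \log m = \epsilon C n$, i.e.\ at most (a constant times) $n$; since we want support $\le n/4$ to invoke Lemma~\ref{lem: skew product theorem for elements of small support}, we pick $\epsilon, C$ compatibly so that $\epsilon C \le 1/8$, say. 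The second hypothesis of Lemma~\ref{lem: getting rid of the easy either case in either or situations.} --- that for all $A,B$ with $|A| \ge \ell$, $|B| \le m$, either $|A^\sigma B| > |A|^c|B|$ or a large translate $\tau A' \subseteq A$ lies in $\mathcal{E}$ --- is exactly Lemma~\ref{lem: growth or small support}: if $a^{-1}A$ contains $\ge |A|^{0.9}$ elements of support $\le \epsilon \log|A| \le \epsilon \log m$, then among these $|A|^{0.9}$ elements some conjugacy class $\sigma^{A_n}$ captures at least $|A|^{0.9}/(\text{number of classes of support} \le \epsilon\log|A|)$ of them; since the number of such classes is $e^{o(n)} = |A|^{o(1)}$ (as $|A| \le e^{Cn}$ in the relevant range, or more carefully since support $\le \epsilon\log|A|$ forces at most $e^{O(\epsilon \log |A|)}$ partitions), we still get $\ge |A|^{0.8}$ of them in one class, giving $a^{-1}A \supseteq A'$ with $A' \in \mathcal{E}$ and $|A'| \ge |A|^{c'}$ for $c' = 0.8$ (or any constant $<1$); equivalently $a A' \subseteq A$. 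Here the growth alternative of Lemma~\ref{lem: growth or small support} already gives $|A^\sigma B| \ge |A|^c|B|$, which is the first branch.

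\medskip

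The remaining hypothesis of Lemma~\ref{lem: getting rid of the easy either case in either or situations.} --- growth for products $A_1^{\sigma_1}\cdots A_i^{\sigma_i}$ of sets \emph{all} drawn from $\mathcal{E}$ --- is where Lemma~\ref{lem: skew product theorem for elements of small support} enters, and this is the step I expect to be the main obstacle. Each $A_j \in \mathcal{E}$ lies in a conjugacy class $\sigma_j^{A_n}$ of support $m_j \le \epsilon C n$. I would argue by induction on $i$: having produced $B := A_1^{\tau_1}\cdots A_{i-1}^{\tau_{i-1}}$ with good size, I want to multiply by $A_i^{\sigma}$. But $B$ is no longer contained in a single conjugacy class, so Lemma~\ref{lem: skew product theorem for elements of small support} does not apply verbatim to $A_i, B$. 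The fix is to track, along the induction, not an arbitrary set but a subset of a single conjugacy class of controlled support: concretely, maintain the invariant that after $j$ steps we have $B_j \subseteq \rho_j^{A_n}$ with support$(\rho_j) \le m_1 + \cdots + m_j \le j \epsilon C n$, and $|B_j| \ge |A_1|^{c}\cdots|A_j|^{c}$, by repeatedly applying Lemma~\ref{lem: skew product theorem for elements of small support} with $\sigma_1 \leftarrow$ the accumulated class, $\sigma_2 \leftarrow \sigma_j^{A_n}$, and landing in the direct sum $\rho_{j-1}^{A_n} \oplus \sigma_j^{A_n}$ --- valid as long as the total support stays $\le n/4$, i.e.\ as long as $j \epsilon C n \le n/4$. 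Since $|A_j| \le |G| = n!$ so $\prod |A_j|^c \le e^{Cn}$ forces the number of ``effective'' steps to be $O(n / \log(\text{min set size})) \le O(n)$, but actually the support constraint is the binding one: we run the accumulation only while $\sum_{j} m_j \le n/8$, and once the accumulated support would exceed this we stop — at that point the accumulated set already has size $\ge \prod |A_j|^{1/3} \ge$ (product over the used indices), and we need to check this already reaches $e^{\Omega(Cn)}$, hence $\ge e^{Cn}$ after re-scaling constants, OR absorb the leftover sets trivially using $1 \in A_j$. The hypothesis $|A| \ge C^{m_j}$ of Lemma~\ref{lem: skew product theorem for elements of small support} is met because on $\mathcal{E}$ we may additionally require $|A'| \ge 2^{(\text{support})}$; this costs only a constant-power loss in $c'$ since a conjugacy class of support $s$ has size $\ge (n-s)^{?} \gg C^s$ for $s \le \epsilon C n$ and $n$ large, and a $|A|^{0.8}$-sized piece of it still beats $C^s$. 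Assembling: feed these two verified hypotheses into Lemma~\ref{lem: getting rid of the easy either case in either or situations.} with $\ell$, $m = e^{Cn}$, exponents $c, c' $, to conclude $|A_1^{\sigma_1}\cdots A_i^{\sigma_i}| \ge \min(e^{Cn}, \prod_j |A_j|^{cc'/2})$, which is the claim after renaming $cc'/2$ as the new $c$ and noting that $C$ was arbitrary (so passing from $e^{Cn}$ to $e^{C'n}$ for the stated $C'$ is free). The delicate points to get right are: (i) bounding the number of conjugacy classes of support $\le \epsilon\log|A|$ by $|A|^{o(1)}$ so the pigeonhole inside $\mathcal{E}$ works; (ii) keeping the accumulated support below $n/4$ throughout the induction, which limits how many small-support factors can be chained before the bound $e^{Cn}$ is reached anyway; and (iii) checking the $|A| \ge C^{\mathrm{supp}}$ side condition survives passing to the $\ge |A|^{c'}$ subset.
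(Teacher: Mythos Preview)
Your proposal is correct and follows the same three-step route as the paper: reduce via Lemma~\ref{lem: getting rid of the easy either case in either or situations.} to the family $\mathcal{E}$ of subsets of a single small-support conjugacy class, verify the either-or hypothesis by Lemma~\ref{lem: growth or small support} plus a pigeonhole over the $e^{O(\sqrt{\epsilon\log|A|})}$ such classes, and then chain Lemma~\ref{lem: skew product theorem for elements of small support} through direct sums until the accumulated support reaches $n/4$. The one point the paper handles more cleanly is your step (ii): instead of fixing the support bound at $\epsilon C n$ and separately imposing $|A'|\ge C_1^{\mathrm{supp}}$, the paper takes $\mathcal{E}$ to consist of sets contained in a conjugacy class of support $\le \epsilon\log|A'|$ with $\epsilon\le 1/(12C)$, which automatically gives $|A_j|\ge e^{m_j/\epsilon}$ and hence, once $\sum m_j\ge n/4$, forces $\prod|A_j|^{1/3}\ge e^{n/(12\epsilon)}\ge e^{Cn}$---precisely the termination check you flagged as needing verification.
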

\begin{proof}
Let $\epsilon_0,c'$ be sufficiently small absolute constant, and let  $c = c(\epsilon_0,c', C)>0$ be sufficiently small as a function of $\epsilon_0, C,c_0$. We set $\epsilon = \min(\frac{1}{12C},\epsilon_0)$, and let $\mathcal{E}$ be the collection of sets that are contained in a conjugacy class of support $\le \epsilon \log |A|$. Then 
    by Lemma \ref{lem: growth or small support} there exists $c' = c'(\epsilon)>0$, such that for each set $A$ and each set $B$ of size $<|G|^{0.9}$, either there exists $\sigma$ with $|A^{\sigma} B| > |A||B|^{c'}$ or there exists $a\in A$, such that $a^{-1}A$ contains $|A|^{0.9}$ elements of support $\le \epsilon \log |A|.$ Furthermore, for all $m$ there are only $e^{O(\sqrt{m})}$ conjugacy classes of support $\le m$ in $G$, and therefore the largest intersection of the set $a^{-1}A$ with a conjugacy class of support $\le \epsilon \log |A|$ has size $\ge \frac{|A|^{0.9}}{e^{O(\sqrt{\epsilon \log |A|})}}\ge |A|^{0.8}$ provided that $\epsilon_0$ is sufficiently small. 

    By Lemma \ref{lem: getting rid of the easy either case in either or situations.} it therefore suffices to prove the lemma when each set $A_i$ is contained in a conjugacy class of support $\le \epsilon \log |A_i|$. 
    
    Let $m_i$ be the size of the support of the elements in $A_i$. We may apply Lemma \ref{lem: skew product theorem for elements of small support} iteratively to find $\sigma_1,\ldots, \sigma_i$ with  $|A_1^{\sigma_1}\cdots A_i^{\sigma_i}| \ge |A_1\cdots A_i|^{1/3},$ provided that $m_1+\cdots +m_i \le n/4.$ 
    This complets the proof as when \[m_1 + \cdots +m_i \ge n/4\] we have $|A_1|\cdots |A_i|\ge e^{m_1/\epsilon + \cdots + m_i/\epsilon} \ge e^{\frac{n}{4\epsilon}}$, and therefore  
     \[|A_1^{\sigma_1}\cdots A_i^{\sigma_i}| \ge |A_1|^{1/3}\cdots |A_i|^{1/3} \ge e^{\frac{n}{12 \epsilon}}\ge e^{Cn}.\] 
\end{proof}

\subsection{Growth or concentration in a conjugacy class}
 
We now prove a skew-product lemma that implies growth of the form $|A^{\sigma}B|\ge |A|^{c}|B|$ for an absolute constant $c>0$ or both $A,B$ must be highly concentrated in certain conjugacy classes. 

\begin{lem}\label{lem: growth or concentration in a conjugacy class}
  Let $r$ be an integer and $G$ be a group with $r$ conjugacy classes.  Let $A,B\subseteq G$, suppose that $r^{20} \le |A|$, and suppose further that for every conjugacy class $\mathcal{C},a\in A,b\in B$, \[|\mathcal {C} \cap a^{-1}A||\mathcal {C}^{-1} \cap b^{-1} B| \le |A^{0.9}| |\mathcal{C}|.\] Then there exists $\sigma \in G$ with 
$|A^{\sigma} B| > |A|^{0.05}|B|.$
\end{lem}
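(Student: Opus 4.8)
The plan is to run the same second‑moment argument that underlies Lemma~\ref{lem:wrapping up}, but bookkeeping with conjugacy classes in place of characters. Put $f=\tfrac{|G|}{|A|}1_A$ and $g=\tfrac{|G|}{|B|}1_B$, the densities of the uniform measures on $A$ and $B$. For each $\sigma$ the function $f^{\sigma}*g$ is a probability density supported on $A\sigma^{-1}B$, so exactly as in the proof of Lemma~\ref{lem:wrapping up}, Cauchy--Schwarz gives $\tfrac{|G|}{|A^{\sigma^{-1}}B|}=\tfrac{|G|}{|A\sigma^{-1}B|}\le\|f^{\sigma}*g\|_2^2$; averaging and reindexing $\sigma\mapsto\sigma^{-1}$ yields
\[
\Expect{\sigma}{\frac{|G|}{|A^{\sigma}B|}}\le\Expect{\sigma}{\|f^{\sigma}*g\|_2^2}.
\]
Hence it suffices to prove $\Expect{\sigma}{\|f^{\sigma}*g\|_2^2}<\tfrac{|G|\,r}{|A|^{0.1}|B|}$: this forces some $\sigma$ with $|A^{\sigma}B|>\tfrac{|A|^{0.1}}{r}|B|\ge|A|^{0.05}|B|$, the last inequality being $r\le|A|^{1/20}$.

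To estimate the average I expand combinatorially. Writing $N_{\sigma}(x)=\#\{(a,b)\in A\times B:\ a\sigma^{-1}b=x\}$ one has $f^{\sigma}*g=\tfrac{|G|}{|A||B|}N_{\sigma}$, hence $\|f^{\sigma}*g\|_2^2=\tfrac{|G|}{|A|^2|B|^2}\sum_x N_{\sigma}(x)^2$, and $\sum_x N_{\sigma}(x)^2$ counts quadruples $(a,a',b,b')$ with $a\sigma^{-1}b=a'\sigma^{-1}b'$, i.e.\ with $\sigma$ conjugating $a'^{-1}a$ to $b'b^{-1}$. By orbit--stabiliser, $\Prob{\sigma}{\sigma u\sigma^{-1}=v}=1_{u\sim v}/|u^{G}|$, so
\[
\Expect{\sigma}{\sum_x N_{\sigma}(x)^2}=\sum_{\mathcal C}\frac{M_A(\mathcal C)\,M_B(\mathcal C)}{|\mathcal C|},\qquad M_A(\mathcal C)=\#\{(a,a')\in A^2:\ a'^{-1}a\in\mathcal C\},\quad M_B(\mathcal C)=\#\{(b,b')\in B^2:\ b'b^{-1}\in\mathcal C\},
\]
the sum running over conjugacy classes $\mathcal C$.

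The one genuinely delicate point — and the step I expect to cost the most care — is to rewrite $M_A$ and $M_B$ in precisely the shape the hypothesis feeds on. Summing $M_A$ over the appropriate coordinate of the pair, and using that $\mathcal C$ is conjugation‑invariant, gives $M_A(\mathcal C)=\sum_{a\in A}|a^{-1}A\cap\mathcal C|$; summing $M_B$ over the other coordinate, and using $b^{-1}\mathcal C b=\mathcal C$ together with $b'b^{-1}\in\mathcal C\iff b^{-1}b'\in\mathcal C$ and passing to inverses, gives $M_B(\mathcal C)=\sum_{b\in B}|b^{-1}B\cap\mathcal C^{-1}|$. Therefore
\[
\Expect{\sigma}{\sum_x N_{\sigma}(x)^2}=\sum_{a\in A}\ \sum_{b\in B}\ \sum_{\mathcal C}\frac{|a^{-1}A\cap\mathcal C|\,|b^{-1}B\cap\mathcal C^{-1}|}{|\mathcal C|},
\]
and now the hypothesis bounds each inner term by $|A|^{0.9}$, while the identity class contributes only $1$; summing over the $r$ classes and then over $(a,b)\in A\times B$ gives the strict bound $\Expect{\sigma}{\sum_x N_{\sigma}(x)^2}<r|A|^{1.9}|B|$, equivalently $\Expect{\sigma}{\|f^{\sigma}*g\|_2^2}<\tfrac{|G|r}{|A|^{0.1}|B|}$, which closes the argument. (As an alternative to the direct count one could instead start from the generalized Frobenius formula, Lemma~\ref{lem: GLPS}, expand $\|f^{=\chi}\|_2^2$ and $\|g^{=\chi}\|_2^2$ as class sums via Lemma~\ref{lem:formula for f^chi}, and recognise the resulting sum over characters as the same sum over conjugacy classes; the double count above simply avoids this translation.)
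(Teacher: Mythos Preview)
Your proof is correct and is essentially the paper's own argument in different clothing. Both proofs compute the same second moment---the average over $\sigma$ of $\sum_x N_\sigma(x)^2$---rewrite it as a sum over conjugacy classes, and bound each summand by the hypothesis; the paper packages this as a Paley--Zygmund argument with an auxiliary uniform target $\tau$ (so that $\Pr[X>0]=\Expect{\sigma}{|A\sigma B|/|G|}$), while you package it as the $L^2$ bound $\tfrac{|G|}{|A\sigma^{-1}B|}\le\|f^{\sigma}*g\|_2^2$ in the style of Lemma~\ref{lem:wrapping up}, but the two Cauchy--Schwarz steps are equivalent and the combinatorial core is identical.
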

\begin{proof}
 Choose $\sigma,\tau$ randomly and independently out of $G$. For each $a\in A,b\in B$ let $X_{a,b}$ be the indicator of the event $a\sigma b = \tau.$ Let $Y$ be the maximum of the $X_{a,b}$ over all $a\in A,b\in B$. Then $Y$ is the indicator of the event $\tau \in A\sigma B.$ 
 
 Taking expectation first over $\sigma$ and then over $\tau$ we obtain $\mathbb{E}[Y] = \mathbb{E}_{\sigma}\left[\frac{|A\sigma B|}{|G|}\right] = \mathbb{E}_{\sigma}\left[\frac{|A^{\sigma} B|}{|G|}\right].$ By the first moment method, in order to prove the lemma, it suffices to lower bound $\mathbb{E}[Y] \ge \frac{|A|^{0.05}|B|}{|G|}.$
 Let $X = \sum X_{a,b}.$ Then we may use the Payley--Zigmund inequality to lower bound $\mathbb{E}[Y] = \Pr[X>0] \ge \frac{\mathbb{E}^2[X]}{\mathbb{E}[X^2]}.$ 
 Let us start by computing first moments. Here we have $\mathbb{E}[X] = \frac{|A||B|}{|G|}$ as for each coice of $a,\sigma ,b$ there is only a single choice of $\tau$ with $\tau =a\sigma b.$  

 The second moment computation relies on the computation of expectations of the form $\mathbb{E}[X_{a,b}X_{a'b'}]$. 
 Now the conditional expectation $\mathbb{E}[X_{a,b}X_{a'b'} | \sigma]$ is 0 whenever $a\sigma b\ne a'\sigma b'$ and $\frac{1}{|G|}$ otherwise. Moreover, $a\sigma b =a'\sigma b'$ if and only if $\sigma^{-1} a^{-1} a' \sigma  = b b'^{-1}.$ This can happen only if $a^{-1}a'$ and $bb'^{-1}$ are in the same conjugacy class $\mathcal{C}$, and the equality then happens with probability $\frac{1}{|\mathcal{C}|}.$ 
 Fixing $a\in A,b\in B$,
 this shows that 
 \[
 \sum_{a',b'} \mathbb{E}[X_{a,b}X_{a',b'}] = \sum_{\mathcal{C}} \sum_{a' \in a \mathcal{C}\cap A, b' \in b\mathcal{C}^{-1}\cap B} \frac{1}{|G||\mathcal{C}|} = \sum_{\mathcal{C}} \frac{|a\mathcal{C}\cap A| |b\mathcal{C}^{-1} \cap B|}{|\mathcal{C}||\mathcal{G}|}.\]
Plugging in the bound $|a\mathcal{C}\cap A| |b\mathcal{C}^{-1} \cap B| \le |A|^{0.9}|\mathcal{C}|$ we obtain  
\[
\sum_{a',b'} \mathbb{E}[X_{a,b}X_{a',b'}]\le \frac{r|A^{0.9}|}{|G|}\le \frac{|A|^{0.95}}{|G|}.
\]
 Summing over all $a\in A, b\in B$ we obtain that 
 $\mathbb{E}[X^2]\le |A|^{0.95}\mathbb{E}[X].$ The Payley--Zigmund inequality now implies that $\Pr[X>0] \ge \frac{\mathbb{E}[X]}{|A|^{0.95}} \ge \frac{|A|^{0.05}|B|}{|G|}.$ This completes the proof of the lemma. 
\end{proof}

\subsection{Growth or concentration in a coset}
In this section our goal is to show that for every set $A$ of size $\ge e^{Cn}$, we either have $|A^{\sigma}B|>|A|^{c}|B|$ for an absolute constant $c>0$ or $A$ has a large subset contained in a coset of the pointwise stabilizer of a large set. 

The idea is simple. We already know that a `large chunk' of a translate of $A$ is contained in a small conjugacy class by Lemma \ref{lem: growth or concentration in a conjugacy class}. We can then make use of the fact that a conjugacy class of support $s$ is contained in the union of the $\binom{n}{s}$ pointwise stabilizers of sets of size $n-s$. 
\subsubsection*{$t$-umvirates}
For a set $I\subseteq [n]$ of size $t$ and an injection $\psi \colon I\to [n]$ the set of permutations $\tau$ such that $\tau(i) =\sigma(i)$ for all $i\in I$ is called an $t$-umvirate, and we denote it by  $U_{I,\sigma}.$ The $t$-umvirate $U_{I,\sigma}$ is a coset of the pointwise stabilizer of the set $I$. 

We write $U_{I}$ for the $t$-umvirate of permutations fixing the set $I$. We also identify $U_I$ with the set of even permutation of the set $[n]\setminus I.$ We therefore also denote $S_{[n]\setminus I}$ for $U_{I}.$ For disjoint sets $I_1,\ldots, I_k$ we write 
$S_{i_1} \times \cdots \times S_{I_k}$ for the product of the subgroup $S_{I_j}$ inside the symmetric group $S_n$   

\begin{lem}\label{lem: growth or coset}
There exists  $C, n_0 >0$, such that the following holds. 
Let $n>n_0,$ and let $A,B\subseteq A_n$ be with $C^n \le |A|.$ Suppose that $|A^{\sigma} B|<|A|^{0.05}|B|$ for all $\sigma \in A_n$. Then there exists $t$ and $t$-umvirates $U,U'$, such that $|A \cap U|> |A|^{0.8}$ and $|U' \cap B| \ge |U'|^{0.2}$.  
\end{lem}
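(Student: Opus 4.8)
The plan is to combine the two tools already developed for alternating groups: Lemma~\ref{lem: growth or concentration in a conjugacy class}, which produces growth $|A^\sigma B|\ge |A|^{0.05}|B|$ unless $A$ (after translation) and $B$ (after translation) both concentrate heavily in dual conjugacy classes, together with the elementary geometric fact that a conjugacy class of support size $s$ in $S_n$ is covered by the $\binom{n}{s}$ pointwise stabilizers of the $(n-s)$-sets it fixes.

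\begin{proof}[Proof sketch]
Since the desired conclusion is invariant under replacing $A$ and $B$ by translates $a^{-1}A$, $b^{-1}B$ (translating $A$ on the left translates every $t$-umvirate $U$ to another $t$-umvirate, and similarly for $B$; and $|A^\sigma B|$ is unchanged under left/right translation), we may pass to whichever translates are convenient. First apply Lemma~\ref{lem: growth or concentration in a conjugacy class} with $r$ the number of conjugacy classes of $A_n$. We have $r = p(n) \le C^{n/2}$, say, so the hypothesis $r^{20}\le |A|$ is satisfied once $|A|\ge C^n$ with $C$ large. Since by assumption $|A^\sigma B| < |A|^{0.05}|B|$ for all $\sigma$, the conclusion of that lemma must fail, so there exist a conjugacy class $\mathcal C$, an element $a\in A$ and an element $b\in B$ with
\[
|\mathcal C \cap a^{-1}A|\,|\mathcal C^{-1}\cap b^{-1}B| > |A|^{0.9}\,|\mathcal C|.
\]
Replacing $A$ by $a^{-1}A$ and $B$ by $b^{-1}B$, we may assume $|\mathcal C\cap A|\,|\mathcal C^{-1}\cap B| > |A|^{0.9}|\mathcal C|$, where now $1\in A$ and $1\in B$.

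Let $s$ be the support size of the elements of $\mathcal C$ (and of $\mathcal C^{-1}$). Every element of $\mathcal C$ fixes an $(n-s)$-element set, hence lies in one of the $\binom{n}{s}$ pointwise stabilizers $U_I = S_{[n]\setminus I}$ with $|I| = n-s$; these are $(n-s)$-umvirates (cosets of pointwise stabilizers). Write $t = n-s$. By pigeonhole applied to $\mathcal C\cap A$, some single $t$-umvirate $U$ satisfies
\[
|A\cap U| \ge \frac{|\mathcal C\cap A|}{\binom{n}{s}}\ge \frac{|\mathcal C\cap A|}{|\mathcal C|}\,\frac{|\mathcal C|}{\binom{n}{s}},
\]
and symmetrically some $t$-umvirate $U'$ satisfies $|B\cap U'|\ge |\mathcal C^{-1}\cap B|/\binom{n}{s}$. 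The point is that $\binom{n}{s}$ is tiny compared to $|\mathcal C|$: since $\mathcal C$ is the class of a permutation of support $s\le n$, by the orbit–stabilizer theorem $|\mathcal C| = n!/|\text{centralizer}|$, and a centralizer of a permutation of support $s$ has order at most (roughly) $(n-s)!\cdot s\cdot 2^{s/2}\cdot(s/2)!$ or so — in any case $\binom{n}{s} = \binom{n}{n-s}$ divides into $|\mathcal C|$ with an enormous quotient; more simply, $|\mathcal C|\ge \binom{n}{s}$ always (choosing the support already gives $\binom ns$ distinct classes-worth of elements, since the elements with a fixed support set biject across support sets), and in fact $|\mathcal C| \ge \binom{n}{s}\cdot(\text{number of fixed-support permutations in the class})$. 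Thus $|\mathcal C|/\binom ns$ is at least the number of permutations of $[s]$ of the given cycle type with full support, which is at least $e^{cs}$ for a constant $c>0$ provided $s$ is not bounded (and if $s = O(1)$ then $|A|\ge C^n$ forces the hypotheses to fail trivially or $t = n-O(1)$ and the umvirates are almost all of $A_n$).

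Now combine the two multiplicative bounds. We have
\[
|A\cap U|\,|B\cap U'| \;\ge\; \frac{|\mathcal C\cap A|\,|\mathcal C^{-1}\cap B|}{\binom ns^{2}} \;>\; \frac{|A|^{0.9}\,|\mathcal C|}{\binom ns^{2}} \;=\; |A|^{0.9}\cdot \frac{|\mathcal C|}{\binom ns}\cdot\frac{1}{\binom ns}.
\]
Since $|U| = |U'| = |S_{[n]\setminus I}\cap A_n| = s!/2 \le |\mathcal C|$ and $\binom ns \le |\mathcal C|/|U|\cdot(\cdots)$, one gets after elementary manipulation that $|A\cap U|> |A|^{0.8}$: indeed using $|A\cap U|\le |U|$ and $|B\cap U'|\le |U'| = |U|$ we get $|A\cap U|\ge |A|^{0.9}|\mathcal C|/(\binom ns^2 |U|)$, and $|\mathcal C| \ge \binom ns\,|U|$ (each of the $\binom ns$ choices of support contributes a distinct set of $\le |U|$ elements, and all support sets contribute equally) forces $|A\cap U| \ge |A|^{0.9}/\binom ns \ge |A|^{0.9}/e^{O(\sqrt n\log n)} \ge |A|^{0.8}$ using $|A|\ge C^n$. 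Symmetrically $|B\cap U'|\ge |B\cap U'|$; to get the stated $|B\cap U'|\ge |U'|^{0.2}$ note $|B\cap U'|\ge |\mathcal C^{-1}\cap B|/\binom ns$, and $|\mathcal C^{-1}\cap B| \ge |A|^{0.9}|\mathcal C|/|\mathcal C\cap A| \ge |A|^{0.9}|\mathcal C|/|U|$ (since $\mathcal C\cap A\subseteq$ the union of umvirates, but more crudely $|\mathcal C\cap A|\le |\mathcal C|$ is too weak — instead use $|\mathcal C\cap A|\le |A|$, giving $|\mathcal C^{-1}\cap B| > |A|^{-0.1}|\mathcal C|$), so $|B\cap U'| > |A|^{-0.1}|\mathcal C|/\binom ns \ge |A|^{-0.1}|U| \ge |U'|^{0.2}$ once $|U| = |U'|$ is large, which holds because $\binom ns$ small forces $s$ large hence $|U'| = s!/2$ huge. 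Choosing $C$ large enough (and $n_0$ large enough) to absorb all the $e^{O(\sqrt n\log n)}$ and polynomial losses against $|A|\ge C^n$ and $|B|$ closes the argument.
\end{proof}

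\emph{Main obstacle.} The bookkeeping is the delicate part: one must track that the number of fixing-sets $\binom ns$ of a support-$s$ class is negligible against both $|\mathcal C|$ and $|A|\ge C^n$, \emph{uniformly in $s$} — the two regimes $s$ small (so $t = n-s$ large, umvirates nearly all of $A_n$, and $|U'|$ huge) and $s$ large (so $e^{cs}$ beats $\binom ns$) must be handled so the same constant $C$ works. The clean way is: $\binom ns \le \binom{n}{n/2}\le 2^n$, $|U'| = (n-t)!/2$ where $t\le n$, and $|\mathcal C|\ge \binom ns\cdot\max(1,e^{cs})$; pushing $|A|\ge C^n$ with $C > 2^{100}$ through $|A|^{0.9}/2^n \ge |A|^{0.8}$ and $|A|^{-0.1}\cdot e^{cs}\cdot(\text{stuff})\ge |U'|^{0.2}$ handles both regimes at once, since when $s$ is small $|A|^{-0.1}\le 1$ is dwarfed by $(n-s)!/2$ being doubly-large, and when $s$ is large the $e^{cs}$ factor dominates.
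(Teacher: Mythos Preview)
Your overall strategy is exactly the paper's: invoke Lemma~\ref{lem: growth or concentration in a conjugacy class}, take the resulting class $\mathcal C$, cover it by the $\binom{n}{s}$ pointwise stabilizers of its fixed-point sets, and pigeonhole. The bound $|A\cap U|\ge |A|^{0.9}/\binom ns\ge|A|^{0.8}$ is fine (and is cleaner if you first extract $|\mathcal C\cap a^{-1}A|\ge|A|^{0.9}$ from $|\mathcal C^{-1}\cap b^{-1}B|\le|\mathcal C|$, rather than going through the product as you do).

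The gap is in your treatment of $|B\cap U'|$. You assert $|\mathcal C|\ge\binom ns\,|U|$, but this inequality points the wrong way: $|\mathcal C|/\binom ns$ equals the number of class elements with a \emph{prescribed} support set, which is at most $s!\approx|U|$, not at least. Without this, the chain $|B\cap U'|\ge|A|^{-0.1}|\mathcal C|/\binom ns\ge|A|^{-0.1}|U|$ collapses, and the fallback $|B\cap U'|\ge|A|^{0.8}/\binom ns\ge|A|^{0.7}$ is not enough to dominate $|U'|^{0.2}$ when $s$ is close to $n$ (then $|U'|^{0.2}\approx(n!)^{0.2}\approx n^{n/5}$, while $|A|$ is only guaranteed to be $C^n$). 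Your ``Main obstacle'' paragraph does not close this: the factor $e^{cs}$ you invoke is far too small against $(s!)^{0.2}$ when $s$ is large.

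The missing ingredient is a lower bound relating $|\mathcal C|$ directly to $|U'|$. The paper first rewrites $|b^{-1}B\cap\mathcal C^{-1}|\ge|\mathcal C|/|A|^{0.1}\ge|\mathcal C|^{8/9}$ (using $|\mathcal C|\ge|A|^{0.9}$), then pigeonholes to $|B\cap U'|\ge|\mathcal C|^{0.8}$, and finally uses that any conjugacy class of support $s$ satisfies $|\mathcal C|\ge|U'|^{1/3}$. This last fact is the substantive point you are missing: it amounts to the centralizer bound that a fixed-point-free permutation of $[s]$ has centralizer of order at most roughly $2^{s/2}(s/2)!\le (s!)^{2/3}$ (the extremal case being a product of transpositions), so the number of class elements with a given support is at least $(s!)^{1/3}$. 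With that in hand one gets $|B\cap U'|\ge|\mathcal C|^{0.8}\ge|U'|^{0.8/3}>|U'|^{0.2}$.
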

\begin{proof}
By Lemma \ref{lem: growth or concentration in a conjugacy class} there exists a conjugacy class $\mathcal{C}$ with  \[
|a^{-1} A \cap \mathcal{C}| |b^{-1}B\cap \mathcal{C}^{-1}|\ge |\mathcal{C}||A|^{0.9}.\] 

In particualar,
\begin{equation}\label{eq:A is large inside C}
|a^{-1} A \cap \mathcal{C}| \ge |A|^{0.9} \frac{\mathcal{|C|}}{|b^{-1}B\cap \mathcal{C}^{-1}|} \ge |A|^{0.9} 
\end{equation}
and \[|b^{-1}B\cap \mathcal{C}^{-1}|\ge |\mathcal{C}|\frac{|A|^{0.9}}{|a^{-1} A \cap \mathcal{C}|} \ge  \frac{|\mathcal{C}|}{|A|^{0.1}}.\]
As \eqref{eq:A is large inside C} implies that $|\mathcal{C}|>|A|^{0.9}$ we obtain that 
\[
|b^{-1}B\cap \mathcal{C}^{-1}|\ge |\mathcal{C}|^{1 -\frac{0.1}{0.9}}=|\mathcal{C}|^{8/9}.
\]
Moreover, we have $\mathcal{|C|}>|A|^{0.9}>C^{0.9n}.$

Let $t$ be the number of fixed points in a permutation in $\mathcal{C}.$ Then $a^{-1} A \cap \mathcal{C}$ is contained in the union of the sets $a^{-1}A \cap U_{I}$ over all sets $I$ of size $t$ and similarly for $b^{-1}B.$ 
Therefore, there exists a set $I$ of size $t$, such that  \[|a^{-1} A \cap U_{I}| \ge \frac{|A|^{0.9}}{\binom{n}{t}}\ge |A|^{0.8},\] provided that $C$ is sufficiently large.

Similarly, there exists a set $J$ of size $t$, such that $|b^{-1}B\cap U_{J}|\ge \frac{|\mathcal{C}|^{8/9}}{\binom{n}{t}}\ge |\mathcal{C}|^{0.8},$ provided that $C$ is sufficiently large.

We now make use of the fact that if $\mathcal{C}$ is a conjugacy class of permutations with $t$-fixed points, then the orbit stabilizer theorem implies that for sufficiently large $t$, we have $t^{t/3}<\binom{n}{t} t^{t/3} < |\mathcal{C}|$. Therefore,

\[
|b^{-1}B\cap U_{J}| \ge |U_{J}|^{\frac{0.8}{3}}\ge |U_{J}|^{0.2}.
\]

Setting $U = aU_I$ and $U' = bU_J$ completes the proof.
\end{proof}

\subsection{Growth from $e^{Cn}$ to $e^{n\log^{3/4} n}$ }

We now prove a growth result that allows us to obtain growth in several steps from from $2^{Cn}$ to $2^{C'n\sqrt{\log n}}$ and then again from $2^{C'n\sqrt{\log n}}$ to $2^{C'n\log^{3/4} n}$. The idea is to use Lemma \ref{lem: growth or coset} to reduce to the case where the sets $A_i$ are contained in $t$-umvirates and then to give a growth result for sets contained in small $t$-umvirates.

\begin{lem}\label{lem: medium sets always growing alternating groups}
There exists $C,c,n_0>0$, such that the following holds. Let $n<n_0,r>Cn$ and let $G=A_n$ be the alternating group. Suppose that $A_1,\ldots,A_i\subseteq G$ are subsets having size $\ge e^r$.  Then there exist $\sigma_1,\ldots, \sigma_i$  with  $|A_1^{\sigma_1}\cdots A_i^{\sigma_i}|\ge \min(|A_1|^{c} \cdots |A_i|^{c}, e^{c \sqrt {n\log n r}})$. 
\end{lem}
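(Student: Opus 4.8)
The plan is to deduce the lemma from Lemma~\ref{lem: getting rid of the easy either case in either or situations.}, applied with $\ell=\lceil e^{r}\rceil$, with $m=\lceil e^{c_0\sqrt{nr\log n}}\rceil$ for a sufficiently small absolute constant $c_0>0$, and with $\mathcal{E}$ the collection of those $A\subseteq G=A_n$ that are contained in some $t$-umvirate of support at most $D:=20\sqrt{nr/\log n}$. Throughout I assume $r<c_0^{2}n\log n$, which guarantees $\ell<m$ and $D\le n/2$ as that lemma requires; the complementary range $r\gtrsim n\log n$, where every $A_j$ already has density at least $|G|^{\Omega(1)}$, is handled by the large-set theory around Theorem~\ref{thm: GLPS main theorem}. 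Once the two hypotheses of Lemma~\ref{lem: getting rid of the easy either case in either or situations.} are verified with some constants $c,c'$, its conclusion is exactly the statement of the present lemma, the final constant being any positive number at most $\min(c_0,\,cc'/2)$.

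\emph{The either--or hypothesis.} Take $A,B\subseteq G$ with $|A|\ge\ell$ and $|B|\le m$. Since $\ell\ge e^{Cn}$ and $C$ is large, Lemma~\ref{lem: growth or coset} applies to $A$ and $B$ and yields one of two outcomes. Either $|A^{\sigma}B|\ge|A|^{0.05}|B|$ for some $\sigma$, which is the growth alternative with $c:=0.05$; or there are a $t$ and $t$-umvirates $U,U'$ with $|A\cap U|>|A|^{0.8}$ and $|U'\cap B|\ge|U'|^{0.2}$. In the second outcome $|U'|^{0.2}\le|U'\cap B|\le|B|\le m$, and since $|U'|=(n-t)!/2$, an elementary Stirling estimate — together with $D\ge\sqrt n$, which holds because $r>Cn$ — forces the common support $s=n-t$ of $U$ and $U'$ to satisfy $s\le D$. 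Hence $A':=A\cap U$ lies in $\mathcal{E}$, satisfies $A'\subseteq A$, and $|A'|>|A|^{0.8}\ge|A|^{c'}$ with $c':=1/2$; this is the structured alternative.

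\emph{Growth inside $\mathcal{E}$.} Take $A_1,\dots,A_i\in\mathcal{E}$ with $|A_j|\ge\ell^{c'}\ge e^{c'r}$. Intersecting a $t$-umvirate with $A_n$ exhibits each $A_j$ as a subset of a coset $g_jA_{S_j}$ of $A_{S_j}$, with $|S_j|\le D$. I would choose $\sigma_1,\sigma_2,\dots$ greedily while maintaining the invariant that $A_1^{\sigma_1}\cdots A_{j-1}^{\sigma_{j-1}}$ lies inside a coset of some $A_{T}$ with $|T|\le(j-1)D$: as long as $jD\le n$ there is a $\sigma_j$ with $\sigma_j(T)\cap g_j(S_j)=\emptyset$, and then — pulling all coset representatives to the front — the multiplication map $\bigl(A_1^{\sigma_1}\cdots A_{j-1}^{\sigma_{j-1}}\bigr)\times A_j^{\sigma_j}\to A_1^{\sigma_1}\cdots A_j^{\sigma_j}$ is injective (the two relevant support-disjoint subgroups having trivial intersection), and the invariant survives with a support-set of size $\le jD$. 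Consequently $|A_1^{\sigma_1}\cdots A_k^{\sigma_k}|=\prod_{j\le k}|A_j|$ whenever $kD\le n$. If $iD\le n$, this already beats $\min(m,\prod_j|A_j|^{c})$. If $iD>n$, I would run the construction only for the first $i_1=\lfloor n/D\rfloor$ sets and set $\sigma_j=1$ for $j>i_1$; then $|A_1^{\sigma_1}\cdots A_i^{\sigma_i}|\ge|A_1^{\sigma_1}\cdots A_{i_1}^{\sigma_{i_1}}|=\prod_{j\le i_1}|A_j|\ge e^{c'r i_1}$, and since $i_1\ge n/(2D)$, a small choice of $c_0$ makes $c'r i_1\ge c_0\sqrt{nr\log n}\ge\log m$, so the product has size $\ge m$. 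Either way the hypothesis holds, with $c=0.05$.

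\emph{Where the difficulty lies.} Everything hinges on the joint calibration of the width $D$: it must be $\gtrsim\sqrt{nr\log n}/\log n$ so that the umvirates produced by Lemma~\ref{lem: growth or coset} genuinely lie in $\mathcal{E}$, and at the same time $\lesssim\sqrt{nr/\log n}$ so that $\lfloor n/D\rfloor$ of the $\mathcal{E}$-sets — each possibly of size only $e^{c'r}$ and of support as large as $D$ — still multiply past $m=e^{c_0\sqrt{nr\log n}}$. These two demands are compatible precisely because the hypothesis $r>Cn$ keeps $\log n$ large relative to all the absolute constants; once $D$ and $c_0$ are fixed accordingly, the coset-representative bookkeeping in the growth step is routine.
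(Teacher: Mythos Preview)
Your proof is correct and follows the same route as the paper: invoke Lemma~\ref{lem: getting rid of the easy either case in either or situations.} with the either--or alternative supplied by Lemma~\ref{lem: growth or coset}, and then handle the structured class $\mathcal{E}$ by conjugating the small-support cosets to be pairwise disjoint so that the product map is injective. The paper parametrises $\mathcal{E}$ by the umvirate size $|U|<e^{5c\sqrt{nr\log n}}$ rather than by the support bound $D$, but this is the same condition; your coset bookkeeping (the condition $\sigma_j(T)\cap g_j(S_j)=\emptyset$) is in fact more carefully stated than the paper's, which tacitly passes to subgroups.

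One small remark: your aside that the range $r\ge c_0^{2}n\log n$ ``is handled by the large-set theory around Theorem~\ref{thm: GLPS main theorem}'' is unnecessary. In that range $e^{r}\ge e^{c\sqrt{nr\log n}}$ once $c\le c_0$, so already $|A_1^{\sigma_1}\cdots A_i^{\sigma_i}|\ge |A_1|\ge e^{r}$ meets the target; no appeal to the GLPS machinery is needed. The same observation disposes of your constraint $D\le n/2$ when $r$ is large.
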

\begin{proof}
Let $c>0$ be a sufficiently small constant. 
By Lemma \ref{lem: growth or coset} for each set $A$ either for all $B$ of size $\le n^{c \sqrt {nr}})$ there exists $\sigma$ with 
\[|A^{\sigma}B|\ge |A|^{0.05}|B|\]
or there exists a $t$ and $t$-umvirates $U,U'$, such that $|A\cap U|>|A|^{0.8}$ and $|U'\cap B|\ge |U|^{0.2}.$ In particular, $|U|<|B|^5 <e^{5c\sqrt{n\log n r}}$. 
Let $\mathcal{E}$ be the collection of sets contained in $t$-umvirates $U$ with $|U|<e^{5c\sqrt{n\log n r}}.$
Then by Lemma \ref{lem: getting rid of the easy either case in either or situations.} we may assume that $A_j\in \mathcal{E}$ for all $j$.
Let us denote the corresponding $t_j$-umvirate by $S_{I_j}$. So we have $A_j\subseteq S_{I_j}$ and  $|I_j| = n-t_j$.

 Then \[ |I_j|!  = |U_{I_j}| <e^{5c \sqrt {n\log n r}}. \] 
 This yields the existence of absolute constants $C',C''>0$, such that 
 \[
 |I_j| \le C' \frac{ 5c \sqrt {n\log n r}}{\log( 5c \sqrt {n\log n r})} \le C'' c \sqrt{n r/\log n}\le \sqrt{nr/\log n}/2,
 \]
 provided that $c$ is sufficiently small. 

Let $t$ be the minimal value of the $t_j$, and set $m = \lfloor \frac{n}{n-t}\rfloor$. Furthermore, let $\sigma_1 ,\ldots ,\sigma_m$ be even permutations, such that the sets $\sigma_j(I_j)$ are pairwise disjoint. Then the conjugate $A_j^{\sigma_j^{-1}}$ of $A_j$
is contained in the $|I_j|$-umvirate $S_{I_j}^{\sigma_j^{-1}} = S_{\sigma_j(I_j)}$. 

Since the product map 
$ (a_1,\ldots a_m) \mapsto a_1 \cdots a_m$ embeds the direct product $S_{\sigma_1(I_1)} \times \cdots \times S_{\sigma_m(I_m)}$ inside $S_n$, and since each set $A_j^{\sigma_j}$ is contained in $S_{\sigma_j(I_j)}$
it follows that  $|A_1^{\sigma_1}\cdots A_i^{\sigma_i}| = |A_1|\cdots|A_i|$. 

Since \[|A_1|\cdots |A_m| \ge e^{r \lfloor n/(n-t)\rfloor }\ge e^{\frac{rn}{\sqrt{nr/\log n}}} = e^{\sqrt{r n\log{n}}},\] this completes the proof of the lemma, provided that $c<1$. 
\end{proof} 

\subsection{Growth or covering by $t$-umvirates}
Earlier we mainly argued that if unless we have growth of the form $|A^{\sigma} B|\ge |A|^{c}|B|$ for some absolute constant $c>0$, the set $A$ must have a unique structure. We now move on to analyze the structure of the set $B$. 

\begin{lem}\label{lem: growth or covering by cosets}
There exists  $C,c,n_0 >0$, such that the following holds. 
 Let $n>n_0,\epsilon>0$ and let $A,B\subseteq A_n$ be with $C^n \le |A|.$ Suppose that $|A\sigma B|<\frac{1}{2}|A|^{0.05}|B|$ for all $\sigma \in A_n$. Then there exist $t$, a set $I$ of size $t$,  a set $S$ of permutations of the set $I$ an element $b\in B,$ and $r$ in the interval $[0.2, 1]$ such that
 \begin{enumerate}
 \item The intersection size $|U_{I, \sigma} \cap b^{-1} B|$ is in the interval $ \left[|U_{I, \sigma}|^{r} ,|U_{I,\sigma}|^{r+\epsilon}\right]$ for every $\sigma \in S$.
 \item  $|U_{I, \sigma }| > |A|^{0.8}$
 \item $\left|b^{-1}B \cap \bigcup_{\sigma \in S}U_{I, \sigma }\right| \ge \frac{\epsilon |B|}{2n \binom{n}{t}^2}$ 
 \end{enumerate}
\end{lem}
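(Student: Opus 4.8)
The plan is to run the conjugacy-class machinery behind Lemma~\ref{lem: growth or coset} but, rather than reading off a single concentrating umvirate, to peel off a whole dyadic layer of umvirates by an extra round of pigeonholing. As a preliminary reduction: since $|A\sigma B| = |A^{\sigma}B|$, the hypothesis is stronger than ``$|A^{\sigma}B| < |A|^{0.05}|B|$ for all $\sigma$'', and as $A_n$ has at most $e^{c_0\sqrt n}\le|A|^{1/20}$ conjugacy classes once $|A|\ge C^n$ and $n>n_0$, Lemma~\ref{lem: growth or concentration in a conjugacy class} (in contrapositive form) produces a conjugacy class $\mathcal C$ and $a\in A$, $b\in B$ with $|\mathcal C\cap a^{-1}A|\cdot|\mathcal C^{-1}\cap b^{-1}B| > |A|^{0.9}|\mathcal C|$; exactly as in the proof of Lemma~\ref{lem: growth or coset} this yields $|\mathcal C| > |A|^{0.9}$ and $|\mathcal C^{-1}\cap b^{-1}B| > |\mathcal C|^{8/9}$, and comparing $|A\sigma B|\ge|A|$ with the hypothesis also gives $|B| > 2|A|^{0.95}$, a fact we shall need. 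Let $t_0$ be the number of fixed points of the permutations of $\mathcal C$; since a conjugacy class of $S_n$ with support size $s$ has size at least $(\sqrt n/2)^{s}$, the bound $|\mathcal C|\le\binom{n}{t_0}(n-t_0)!$ combined with $|\mathcal C|>|A|^{0.9}$ pins the common umvirate size $u_{t_0} := (n-t_0)!/2$ from below by $|A|^{0.8}$ (this is condition~(2)), and the same estimate constrains the size of $(n-t)!$ for nearby $t$.

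Next I would extract $I$, $S$ and $r$. Writing $B' = b^{-1}B$ and choosing an integer $t\le t_0$ — to be optimised below, with $t=t_0$ as the default — so that $u := (n-t)!/2 > |A|^{0.8}$, I would partition $B'$ by the fibres of $\pi\mapsto\bigl(\mathrm{Fix}(\pi),\,\pi^{-1}(\mathrm{Fix}(\pi))\bigr)$ over the $\pi\in B'$ with exactly $t$ fixed points, and then, within a choice of first coordinate $I$, bucket the base-$I$ umvirates by the value of $\lfloor\epsilon^{-1}\log_{u}|U_{I,\sigma}\cap B'|\rfloor$. There are at most $n$ choices of $t$, at most $\binom nt^2$ choices of the pair, and at most $2/\epsilon$ dyadic buckets with exponent in $[0.2,1]$; so, provided the umvirates meeting $B'$ in fewer than $u^{0.2}$ points together carry at most a $\bigl(1-\tfrac1{n\binom nt^2}\bigr)$-fraction of $B'$, a heaviest surviving bucket retains at least $\tfrac{\epsilon|B|}{2n\binom nt^2}$ of $B'$. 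Taking $I$ to be its first coordinate, $r$ its exponent, and $S$ the set indexing the umvirates in it (all of common base $I$) then gives conditions~(1) and~(3), while condition~(2) was already secured.

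The entire difficulty is the proviso: ruling out that $B'$ sits thinly across very many base-$I$ umvirates of size below $u^{0.2}$. Two mechanisms are available, and which one applies dictates the choice of $t$. When $B'$ is not much larger than $|\mathcal C|^{8/9}\binom n{t_0}$, the conjugacy-class information already forces a single heavy umvirate: the elements of $\mathcal C^{-1}\cap B'$ with a common fixed-point set $I$ number at least $|\mathcal C|^{8/9}/\binom n{t_0}$ and all lie in $U_I = U_{I,\mathrm{id}}$, and $|\mathcal D|\ge(\sqrt n/2)^{|\mathrm{supp}\,\mathcal D|}$ shows $|U_I\cap B'|\ge u_{t_0}^{0.2}$, so $S=\{\mathrm{id}\}$ suffices. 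When $B'$ is large, one uses instead that distinct base-$t$ umvirates are disjoint cosets of the pointwise stabiliser of a $t$-set, whence at most $n!/(n-t)!$ of them are non-empty; choosing $t$ small enough that $n!/(n-t)! < \tfrac12\,|B'|/u^{0.2}$ — which is possible exactly because $|B| > 2|A|^{0.95}$ makes $|B'|$ large — forces the small umvirates to carry less than half of $B'$, and a heaviest non-small bucket then finishes with room to spare. The main obstacle I anticipate is proving that one of these two regimes always covers the remaining, ``intermediate'', range of $|B'|$, and carrying out the attendant Stirling estimates relating $|\mathcal C|$, $u$, $\binom nt$, $|A|$ and $|B|$; it is here that the sharper hypothesis (the factor $\tfrac12$, versus the bare $|A|^{0.05}$ of Lemma~\ref{lem: growth or coset}) supplies the needed slack. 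Finally one chooses $c$ small and $C,n_0$ large to absorb all constants.
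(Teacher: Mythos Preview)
Your single-pass approach has a genuine gap, and you correctly locate it: the ``proviso'' that the heavy umvirates (those with $|U_{I,\sigma}\cap B'|\ge u^{0.2}$) together carry a fixed fraction of $B'$. Your two regimes do not cover all cases. In regime~2, for a fixed $I$ the umvirates $U_{I,\sigma}$ partition the group into $n!/(n-t)!$ cosets, so the light ones together carry at most $\tfrac{n!}{(n-t)!}\,u^{0.2}\asymp n!/(n-t)!^{0.8}$ elements; this quantity is at least $(n!)^{0.2}$ for every $t$, so the argument bites only once $|B|\gtrsim (n!)^{0.2}$, far stronger than the bound $|B|>2|A|^{0.95}\ge 2C^{0.95n}$ you derive. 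Regime~1 covers only $|B|\lesssim |\mathcal C|^{8/9}\binom n{t_0}$, and since $\mathcal C$ is dictated by a single application of Lemma~\ref{lem: growth or concentration in a conjugacy class} you cannot choose it to bridge the intermediate range.

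The paper bypasses the proviso entirely by an \emph{iterative peeling} argument. Set $B_0=B$; as long as $|B_i|\ge|B|/2$, the hypothesis gives $|A^\sigma B_i|\le|A^\sigma B|<\tfrac12|A|^{0.05}|B|\le|A|^{0.05}|B_i|$, so Lemma~\ref{lem: growth or coset} applies to the pair $(A,B_i)$ and produces a $t_i$-umvirate $U_{I_i,\sigma_i}$ with $|U_{I_i,\sigma_i}|>|A|^{0.8}$ and $|B_i\cap U_{I_i,\sigma_i}|>|U_{I_i,\sigma_i}|^{0.2}$; one then sets $B_{i+1}=B_i\setminus U_{I_i,\sigma_i}$ and repeats. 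When the process halts, the removed umvirates cover at least $|B|/2$, and each one is heavy \emph{by construction}---no proviso needed. Now pigeonhole: first over $t$ (at most $n$ values), then over the $\epsilon$-bucket for $r$ in $[0.2,1]$ (at most $1/\epsilon$ buckets), and finally over the pair $(I_i,\sigma_i(I_i))$ (at most $\binom nt^2$ values). Translating by a suitable $b$ in the surviving family yields $I$, $S$, $r$ satisfying all three conditions. The factor $\tfrac12$ in the hypothesis is thus not Stirling slack: it is exactly what keeps Lemma~\ref{lem: growth or coset} applicable to every $B_i$ down to half-size.
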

\begin{proof}
    Suppose that $\frac{1}{2}|A|^{0.05}|B|$
    Define iteratively a sequence of sets \[B = B_0\supseteq  B_1\supseteq \cdots \supseteq B_m\] as follows. In the $i$th stage, where $B_i$ is defined you stop the process and set $m=i$ if $|B_i|<|B|/2$.  Otherwise for all $\sigma$ we have 
    $|A^{\sigma} B_{i}|<|A^{\sigma} B|<|A|^{0.05}|B_i|.$ 
    By Lemma \ref{lem: growth or coset} there exists $t_i$ and a $t_i$-umvirate $U_{I_i,\sigma_i}$, such that $|U_{I_i,\sigma_i}|>|A|^{0.8}$ and 
    $|B_{i}\cap U_{I_i,\sigma_i}| > |U_{I_i,\sigma_i}|^{0.2}$.
    We then set $B_{i+1} = B_{i}\setminus U_{I_i,\sigma_i}$. 

    By construction we have $|B\cap \bigcup_{i=1}^{m} U_{I_i,\sigma_i }| \ge |B|/2.$
    As there are at most $n$ potential choices for the $t_i$ this shows that there exists $t$, such that when setting $S'\subseteq [m]$ to be the set of values $i$ for which $U_{I_i,\sigma_i}$ is a $t$-umvirate, we have $|B\cap \bigcup_{i\in S'} U_{I_i,\sigma_i }| \ge \frac{|B|}{2n}.$
    Partitioning the interval $[0.2,1]$ into at most $1/\epsilon$ intervals of size $\le \epsilon,$ we obtain that there exists $r$, such that when setting $S''\subseteq S'$ to be the set of all $i$ with  $|B\cap U_{I_i,\sigma_i}| \in \left[|U_{I,\sigma}|^r, |U_{I,\sigma}|^{r+\epsilon}\right]$ we obtain that 
    \[
    |B\cap \bigcup_{i\in S''} U_{I_i,\sigma_i }| \ge \frac{\epsilon|B|}{2n}.
    \]
    Since there are at most $\binom{n}{t}$ coices of $I_i$ and $\binom{[n]}{t}$ choices for the set $\sigma_i(I_i)$ we may find sets $I,J$ of size $t$, such that when setting $S'''$ to be the set of $t$-umvirates  $U_{I_i,\sigma_i}$ for which $|I_i| = I$ and $\sigma_i(I_i) =J$ we have 
    \[
        |B\cap \bigcup_{U \in S'''} U| \ge \frac{\epsilon|B|}{2n\binom{n}{t}^2}.
    \]
    Let $b \in B\cap \bigcup_{U \in S} U.$ Then $| b^{-1}B \cap \bigcup_{U \in S'''} b^{-1}U| \ge \frac{\epsilon|B|}{2n\binom{n}{t}^2}.$ 
    
    So setting $S$ to be the set of permutations of $I$ of the form $b^{-1}\sigma$ with $U_{I,\sigma}\in S'''$ completes the proof.
\end{proof}

It will be convenient to work with the following simplification of the lemma that corresponds to a certain choice of the parameters. 

\begin{lem}\label{lem: simplifiel growth or covering}
    There exists  $C,c,n_0 >0$, such that the following holds. 
 Let $n>n_0,\epsilon>0$ and let $A,B\subseteq A_n$ be with $2^{Cn\sqrt{\log n}} \le |A|.$ 
 Suppose that 
 \[|A\sigma B|<\frac{1}{2}|A|^{0.05}|B|\]
 for all $\sigma \in A_n$. Then there exist a set $I$ of size $\le n - \frac{n}{\sqrt{\log n}},$ $b\in B, r\in [0.2,1)$ and a set $B' \subseteq b^{-1}B \cap S_{I}\times S_{[n]\setminus I},$ such that the following holds. Let $\pi$ be the projection map from $S_{[n]\setminus I} \times S_I$ to $S_{I}.$ Then \[|\pi(B')|\ge \frac{|B|}{5^n (n-|I|)!^r}.\] Moreover, for every $\sigma \in \pi(B')$, we have \[|\pi^{-1}(\sigma)\cap B'| \ge (n-|I|)!^r.\]  
\end{lem}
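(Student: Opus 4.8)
The plan is to derive this lemma directly from Lemma \ref{lem: growth or covering by cosets} by instantiating the parameter $\epsilon$ there with a value tailored to the hypothesis $|A|\ge 2^{Cn\sqrt{\log n}}$, and then repackaging the conclusion in terms of the projection $\pi$ rather than $t$-umvirates. First I would apply Lemma \ref{lem: growth or covering by cosets} with $\epsilon$ replaced by a small absolute constant $\epsilon_0$ (say $\epsilon_0 = 0.01$); note the lower bound $|A|\ge C^n$ needed there is implied by $|A|\ge 2^{Cn\sqrt{\log n}}$ for $n$ large. This produces $t$, a set $I$ of size $t$, a set $S$ of permutations of $I$, an element $b\in B$, and $r\in[0.2,1]$ with the three listed properties. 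The key numerical point is that property (2), $|U_{I,\sigma}| = (n-t)! > |A|^{0.8} \ge 2^{0.8Cn\sqrt{\log n}}$, forces $(n-t)!$ to be large, and taking logarithms together with Stirling gives $(n-t)\log(n-t) \gtrsim Cn\sqrt{\log n}$, hence $n - t = |[n]\setminus I|$ is at least roughly $n/\sqrt{\log n}$ up to the constant $C$; equivalently $t = |I| \le n - n/\sqrt{\log n}$ once $C$ is chosen large enough. This is exactly the bound on $|I|$ demanded in the statement.

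Next I would translate the ``$t$-umvirate'' language into the product-group language. Each $U_{I,\sigma}$ for $\sigma\in S$ is a coset of the pointwise stabilizer $S_{[n]\setminus I}$ inside $S_{I}\times S_{[n]\setminus I}$ (after identifying $U_I$ with $S_{[n]\setminus I}$ as in the paragraph on $t$-umvirates), and two such cosets $U_{I,\sigma}$, $U_{I,\sigma'}$ are either equal or disjoint according to whether $\sigma,\sigma'$ agree on $I$. Thus the union $\bigcup_{\sigma\in S}U_{I,\sigma}$ is precisely $\pi^{-1}(S)\cap(S_I\times S_{[n]\setminus I})$, where $\pi$ is the projection onto $S_I$, and $S$ itself plays the role of $\pi$ of that union. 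I would set $B' := b^{-1}B\cap\bigcup_{\sigma\in S}U_{I,\sigma}$, so that $B'\subseteq b^{-1}B\cap(S_I\times S_{[n]\setminus I})$ and $\pi(B') = S$ (using property (1), which guarantees each fibre is nonempty since $r\ge 0.2>0$). Property (1) then reads: for every $\sigma\in\pi(B')$, $|\pi^{-1}(\sigma)\cap B'| = |U_{I,\sigma}\cap b^{-1}B| \ge |U_{I,\sigma}|^{r} = (n-|I|)!^{r}$, which is the ``moreover'' clause.

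Finally, for the lower bound on $|\pi(B')|$, I would combine property (3) with the per-fibre upper bound from the right endpoint of the interval in property (1). Property (3) gives $|B'| \ge \epsilon_0|B|/(2n\binom{n}{t}^2)$, while each fibre has size at most $(n-|I|)!^{r+\epsilon_0}$, so $|\pi(B')| \ge |B'| / (n-|I|)!^{r+\epsilon_0} \ge \epsilon_0|B| / (2n\binom{n}{t}^2 (n-|I|)!^{r+\epsilon_0})$. It remains to absorb the factors $2n$, $\binom{n}{t}^2$, and the extra $(n-|I|)!^{\epsilon_0}$ into the clean bound $|B|/(5^n(n-|I|)!^{r})$. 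Here $\binom{n}{t}^2 \le 4^n$ and $2n\le 2^n$ for $n$ large, so $2n\binom{n}{t}^2 \le 8^n$ — this is slightly larger than $5^n$, so I would instead be a touch more careful: use $\binom{n}{t}\le 2^n$ is wasteful; better to bound $2n\binom{n}{t}^2\epsilon_0^{-1}(n-|I|)!^{\epsilon_0}$ and observe that since $(n-|I|)!\le n!\le n^n$, the term $(n-|I|)!^{\epsilon_0}\le n^{\epsilon_0 n}$, and all of these polynomial-in-exponential factors are at most $(5/4)^n \cdot \text{something}$... The honest fix is to choose $\epsilon_0$ small enough and $n_0$ large enough that $2n\epsilon_0^{-1}\binom{n}{t}^2(n-|I|)!^{\epsilon_0} \le 5^n$; since $\binom{n}{t}^2\le 4^n$ and $(n-|I|)!^{\epsilon_0}\le n^{\epsilon_0 n} = 2^{\epsilon_0 n\log_2 n}$ which is not bounded by a constant to the $n$.

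The main obstacle I anticipate is precisely this last bookkeeping step: the factor $(n-|I|)!^{\epsilon_0}$ is genuinely super-exponential in $n$ unless $n-|I|$ is controlled, so one cannot simply absorb it into $5^n$ for free. The resolution is that we have freedom in how we discretize: rather than using a fixed $\epsilon_0$ in Lemma \ref{lem: growth or covering by cosets}, I would take $\epsilon_0 = \epsilon_0(n)$ shrinking like $1/\log n$ (or $c/\log n$), which keeps $(n-|I|)!^{\epsilon_0} \le 2^{\epsilon_0 n\log_2 n} \le 2^{O(n)}$ under control, at the mild cost that the interval in property (1) is now of width $\epsilon_0$ rather than a constant — but that is harmless since we only use the lower endpoint $r$ in the final statement and can simply fold the upper endpoint's $\epsilon_0$-slack into the fibre-size estimate. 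With $\epsilon_0 = c/\log n$ one gets $2n\epsilon_0^{-1}\binom{n}{t}^2(n-|I|)!^{\epsilon_0} \le (\log n) \cdot 2^{O(n)} \cdot 2^{cn} \le 5^n$ for $c$ small and $n$ large, completing the proof.
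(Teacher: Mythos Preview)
Your approach is correct and essentially the same as the paper's: apply Lemma~\ref{lem: growth or covering by cosets} with a shrinking $\epsilon$, reinterpret the $t$-umvirates as fibres of the projection $\pi$, and use the upper fibre bound together with property~(3) to lower-bound $|\pi(B')|$. The paper simply takes $\epsilon = 1/n$ (so that $(n-|I|)!^{\epsilon}\le (n!)^{1/n}\le n$, making the absorption into $5^n$ immediate), which is a touch cleaner than your $c/\log n$ but morally identical.
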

\begin{proof}
    We may apply Lemma \ref{lem: growth or covering by cosets} with $\epsilon = 1/n$ to find $t,$ a set $I$ of size $t$, and a set $S$ of permutations in $S_{I}$ with 
   $|b^{-1}B \cap \bigcup_{\sigma\in S} U_{I,\sigma}|\ge \frac{|B|}{2n^2\binom{n}{t}^2},$ such that 
   $|U_{I}|>|A|^{0.8}$ and also 
   $|B\cap U_{I,\sigma}| \in \left[ |U_{I,\sigma}|^{r}, |U_{I,\sigma}|^{r+1/n} \right] $ for all $\sigma \in S.$ 
   Let $B' = b^{-1}B \cap \bigcup_{\sigma\in S} U_{I,\sigma}.$ Then $B'\subseteq S_I \times S_{[n]\setminus I}$ and  every $\sigma \in \pi(B')$, is in $S$. This shows that 
   \[|\pi^{-1}(\sigma) \cap B'|  = |U_{I,\sigma}\cap B'|\in [(n-|I|)!^r, (n-|I|)!^{r+1/n}.\]
   Now $|B'|\ge \frac{|B|}{2n^2\binom{n}{t}^2}$ 
   in conjunction with the upper bound  $|\pi^{-1}(\sigma) \cap B'|\le (n-|I|)!^{r+1/n}$ for each $\sigma\in \pi(B')$, imply that  
   \[|\pi(B')|\ge \frac{|B|}{2n^2\binom{n}{t}^2 |(n-|I|)!|^{r+1/n}}\ge \frac{|B|}{|(n-|I|)!|^{r} 5^n},\] provided that $n$ is sufficiently large. 
   Finally, as 
   \[
   2^{0.8C n \sqrt{\log n}}\le |A|^{0.8} \le |U_{I}| = (n-|I|)!.
   \]
   We obtain that 
   \[
   n-|I| \ge n/\sqrt{\log n},
   \]
   provided that $C$ is sufficiently large.
\end{proof}

\subsection{Growth up to $|G|^{c}$}

\begin{lem}\label{lem: skew product for large sets}
There exists $C,c,n_0>0$, such that if $n>n_0,$  and $A\subseteq A_n$ has size $\ge 2^{C\sqrt{n}\log n},$ then $|A^{\sigma}B|>\min(|A|^{c}|B|, |G|^{c})$ for all subsets $B$ of $A_n$.
\end{lem}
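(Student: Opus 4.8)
The plan is to assume that no conjugation witnesses growth and derive a contradiction, feeding the failure of growth into the structural dichotomy of Lemma~\ref{lem: simplifiel growth or covering} and then exploiting the resulting structure of $A$ and $B$ by an alignment argument. So suppose $|A^{\sigma}B|\le\min(|A|^{c}|B|,|G|^{c})$ for every $\sigma\in A_n$. Then $|B|<|G|^{c}$ (otherwise $|A^{\sigma}B|\ge|B|\ge|G|^{c}$, and since $B\ne A_n$ a short argument upgrades this to the strict inequality), and, because $|A|$ is enormous and hence $\tfrac12|A|^{0.05}>|A|^{c}$, also $|A^{\sigma}B|<\tfrac12|A|^{0.05}|B|$ for all $\sigma$. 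As in Lemma~\ref{lem: getting rid of the easy either case in either or situations.}, replacing $A$ and $B$ by translates only changes $\sigma$, so we may normalise those translates freely.

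First I would extract structure. Since $|A|$ is large enough, Lemma~\ref{lem: simplifiel growth or covering} applies: there are a set $I$ with $m:=n-|I|\ge n/\sqrt{\log n}$, an exponent $r\in[0.2,1)$, and a subset $B_1\subseteq B\cap\bigl(S_I\times S_{[n]\setminus I}\bigr)$ such that, with $\pi\colon S_{[n]\setminus I}\times S_I\to S_I$ the projection, $|\pi(B_1)|\ge|B|/(5^{n}m!^{r})$ and $|\pi^{-1}(\tau)\cap B_1|\ge m!^{r}$ for every $\tau\in\pi(B_1)$. Tracking the proof of Lemma~\ref{lem: simplifiel growth or covering} (which invokes Lemma~\ref{lem: growth or coset} with this value of $t=|I|$) also supplies a set $J$ with $|J|=|I|$ and a subset $A_0\subseteq A\cap S_{[n]\setminus J}$ with $|A_0|>|A|^{0.8}$; note $m!\ge|U_I|>|A|^{0.8}$, so $m$ is large.

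Now the alignment. Pick an even permutation $\sigma$ with $\sigma([n]\setminus I)=[n]\setminus J$, so $A_0^{\sigma}\subseteq S_{[n]\setminus I}$. If $\alpha\in A_0^{\sigma}$ and $\beta=\tau\gamma\in B_1$ with $\tau\in S_I$, $\gamma\in S_{[n]\setminus I}$, the supports of $\alpha$ and $\tau$ are disjoint, so $\alpha\beta=\tau(\alpha\gamma)$; hence contributions from distinct $\tau$ lie in distinct cosets of $S_{[n]\setminus I}$ and
\[
|A^{\sigma}B|\ \ge\ |A_0^{\sigma}B_1|\ =\ \sum_{\tau\in\pi(B_1)}|A_0^{\sigma}\,F_{\tau}|,\qquad F_{\tau}:=\{\gamma\in S_{[n]\setminus I}:\tau\gamma\in B_1\},
\]
with $|F_{\tau}|\ge m!^{r}$ for each $\tau$. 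Identify $S_{[n]\setminus I}$ with $S_m$. If $m!^{r}\le|A|^{0.8}/(5^{n}|A|^{c})$, the trivial bound $|A_0^{\sigma}F_{\tau}|\ge|A_0^{\sigma}|>|A|^{0.8}$ already yields $|A^{\sigma}B|\ge|\pi(B_1)|\,|A|^{0.8}\ge(|B|/5^{n}m!^{r})\,|A|^{0.8}>|A|^{c}|B|$, a contradiction.

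The remaining case, where the fibres $F_{\tau}$ are themselves large subsets of $S_m$, is the crux and the step I expect to be the main obstacle: here the trivial bound is useless and one must genuinely grow each block $A_0^{\sigma}F_{\tau}$ inside $S_m$. I would run this by induction on $n$: after a bounded translation fixing parities so as to work in $A_m$, use that the freedom left in $\sigma$ realises an arbitrary conjugate of $A_0$ inside $S_{[n]\setminus I}$, and invoke the skew-product statement for $A_m$ (this lemma, for the smaller group) together with Lemmas~\ref{lem: always growing small sets alternating groups} and~\ref{lem: medium sets always growing alternating groups} across the whole range of fibre sizes, obtaining a gain $|A_0^{\sigma}F_{\tau}|\ge m!^{r}|A|^{c'}$ with $c'>c$; then $|A^{\sigma}B|\ge|\pi(B_1)|\,m!^{r}|A|^{c'}\ge(|B|/5^{n})\,|A|^{c'}>|A|^{c}|B|$, again a contradiction. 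Two difficulties must be handled: (i) \emph{uniformity} — a single $\sigma$ must serve all fibres $F_{\tau}$ simultaneously, which points to replacing the purely existential skew-product bound by an averaging/second-moment estimate for $\mathbb{E}_{\sigma}|A_0^{\sigma}F_{\tau}|$; and (ii) \emph{threshold bookkeeping} for the induction, since when $m$ is close to $n$ the fibres can be almost as large as $A_m$ and the allowed growth factor degrades, so that sub-case is better disposed of directly — checking $|B|\ge|G|^{c}$, or using a separate character-theoretic estimate for products of large-support conjugacy classes.
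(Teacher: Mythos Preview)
Your opening moves are right --- the contradiction setup, the reduction via Lemma~\ref{lem: simplifiel growth or covering}, and the observation that the same invocation of Lemma~\ref{lem: growth or coset} pins $A$ to a $t$-umvirate with the same $t$ --- but the recursive step has a genuine gap, and it is more serious than the ``threshold bookkeeping'' you flag in (ii). You place $A_0^{\sigma}$ in the \emph{fibre} direction $S_{[n]\setminus I}\cong S_m$ and propose to grow each block $A_0^{\sigma}F_{\tau}$ by invoking the present lemma inside $A_m$. But that inductive hypothesis only guarantees growth up to the ceiling $|A_m|^{c}\approx m!^{c}$, whereas every fibre already satisfies $|F_{\tau}|\ge m!^{r}$ with $r\ge 0.2>c$. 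So the inductive call returns nothing beyond the trivial bound $|A_0^{\sigma}F_{\tau}|\ge|F_{\tau}|$: the ``large fibre'' regime you describe as the crux is in fact the \emph{only} regime, and the induction closes nowhere. (Your ``small fibre'' case $m!^{r}\le |A|^{0.8}/(5^n|A|^{c})$ is essentially vacuous once one remembers $m!>|A|^{0.8}$.) The uniformity obstruction (i) is also real, and the averaging fix you sketch is not available from the lemmas at hand: Lemma~\ref{lem:wrapping up} controls $\mathbb{E}_{\sigma}[1/|A^{\sigma}B|]$, which goes the wrong way for summing over fibres.

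The paper resolves both issues by reversing your alignment and replacing the induction by an explicit iteration of the dichotomy. After first shrinking $A$ to lie in an $\lfloor n/2\rfloor$-umvirate, it places a conjugate $A_1$ of $A$ in the \emph{projection} direction $S_{I}$ (where $B_1=\pi(B')$ lives), and then reapplies Lemma~\ref{lem: simplifiel growth or covering} to the pair $(A_1,B_1)$ inside $S_{I}$. At each level one faces the same dichotomy: either $|A_i^{\sigma}B_i|\ge\tfrac12|A|^{0.05}|B_i|$ for some $\sigma\in S_{I_i}$ (the strong bound coming from the \emph{hypothesis} of Lemma~\ref{lem: simplifiel growth or covering}, not from any weak $|G|^{c}$ conclusion), or one peels off a further fibre layer $S_{J_{i+1}}$ and descends to $S_{I_{i+1}}$. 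The descent runs at most $O(\sqrt{\log n})$ times; if it reaches $|I_i|<n/2$ (so $A$ no longer fits), the accumulated product $\prod_j|J_j|!^{r_j}$ forces $|B|\ge|G|^{c}$, contradiction. Otherwise growth occurs at some level $i$, and because $A_i^{\sigma}\subseteq S_{I_i}$ commutes with all the accumulated fibres in $\prod_j S_{J_j}$, the single $\sigma$ witnessing growth for the projection lifts automatically: $|A^{\sigma'}B|\ge|A_i^{\sigma}B_i|\cdot\prod_j|J_j|!^{r_j}$, with no uniformity issue at all.
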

\begin{proof}
By Lemma \ref{lem: growth or coset} there exist $t$ and a $t$-umvirate $U$ with $|A\cap U|>|A|^{0.8}$ and $(n-t)! = |U|<|B|^{5}< n^{5cn}.$ Provided that $c$ is sufficiently small, this implies that $t\ge n/2.$ By decreasing $c$ is necessarily it is sufficient to prove the lemma for $A\cap U$ replacing $A$.
    So without loss of generality let us assume that $A$ is contained in an $\lfloor n/2 \rfloor$-umvirate $U$. 

    By Lemma \ref{lem: simplifiel growth or covering} there exists a set $I$ of size $\le n(1-\frac{1}{\sqrt{\log n}})$, a permutation $b\in B,$ $r\in [0.2,1],$ and a subset $B'\subseteq b^{-1}B \cap S_I \times S_{n\setminus I},$ 
    such that $B'\cap \pi^{-1}(\{\sigma\}) \ge (n-|I|)!^{r}$ for all $\sigma \in \pi(B')$ and the size of $B_1: = \pi(B')$ is at least $\frac{|B|}{5^n (n-|I|)!^r}$. 

    Now note that if $|I|\ge n/2,$ then $|B|\ge (n-|I|)!^{r}\ge n^{cn}$, provided that $c$ is sufficently small, which is a contradiction.  Therefore $|I|<n/2.$
    As $A$ is contained in an $\lfloor n/2\rfloor$-umvirate, we may find a translate $A_1:= \sigma_1 A\sigma_2$ of it contained in $S_{[n]\setminus I}$. Set $I_1 = I, J_1=[n]\setminus I_1.$
    
    While we can, we iterate with $A_1, B_1,I_1$ taking the roles of $A_0 = A, B_0 = B, I_0 = [n]$ and repeat to find tuples $(A_i,  B_i, I_i, J_i,r_i)$ stopping when either $|I_i|<n/2$ or $|A_i ^{\sigma} B_i|\ge \frac{1}{2}|A_i|^{0.05}|B|$. 
    
    The tuples $(A_i,  B_i, I_i, J_i,r_i)$ have the following properties. Firstly, the set $A_i$ is a translate of $A$ contained in $S_{I_i}$. Moreover, we have $J_i = J_{i-1} \setminus I_i,$ and 
    \[|I_i|\le |I_{i-1}|\left(1 - \frac{1}{\sqrt{\log |I_{i-1}|}}\right)\le |I_{i-1}|\left(1  - \frac{1}{\sqrt{\log (n/2)}}\right),\] 
    and furthermore there exists $b\in B_{i-1},$ and $B' \subseteq b^{-1} B_{i-1} \cap S_{I_i}\times S_{J_i}$, such that when setting $\pi_i$ to be the projection map $\pi_i \colon S_{I_i}\times S_{J_i} \to S_{I_i}$ we have $\pi_{i}^{-1}(\sigma)\ge |J_i|!^{r_i}$ for every $\sigma\in B_i:= \pi(B')$ and  
    \[
    |B_i| \ge \frac{|B_{i-1}|}{5^{|J_i|}|J_i|!^{r_i} }
    \] 

    As this process is stopped whenever $|I_i| \le n/2$, and as $I_i \le I_{i-1}(1-\frac{1}{\sqrt{\log n}})$, this process is iterated at most $C' \sqrt{\log n}$ times for some absolute constant $C'.$ This guarantees that $|B_i|\ge \frac{|B|}{|J_1|!\cdots |J_r|! \cdot 5^{C'n\sqrt{\log n}}}$ throuout the process. 
    Moreover, there exists a set  $B'\subseteq b^{-1}B\cap S_{I_i}\times S_{J_i} \times S_{J_{i-1}}\cdots S_{J_1}$, such that $B_i$ is the projection of $B'$ onto $S_{I_i}$ and for the projection map \[\pi\colon S_{I_i}\times S_{J_i} \times S_{J_{i-1}}\cdots S_{J_1} \to S_{I_i}\] we have  \[|\pi^{-1}(\{\sigma\}) \cap B'|\ge |J_i|!^{r_i} \times |J_{i-1}|!^{r_i-1} \cdots |J_1|!^{r_1}.\] 

    Now the process terminates either for $|I_i|\le n/2$, or when 
    \[|A_j ^{\sigma} B_j|>1/2|A_j|^{0.05}|B_j|\] for some $\sigma \in S_{I_j}.$ Since, $|J_j| \ge \frac{n}{c'\sqrt{\log n}}$ for an absolute constant $c'>0$ we have  
    \[
    |J_{j}|!\cdots |J_1|!\ge n^{c''(|J_1| +\cdots + |J_{j}|)} \ge n^{c''n/2}.
    \]
    Thus,
    \[
    |B| \ge \frac{n^{c''n/2}}{5^{C'n\sqrt{\log n}}}> n^{cn},
    \] provided that $c$ is sufficiently small with respect to $C',c''$, which is a contradiction. Therefore, this process terminates when for some $\sigma \in S_{I_j}$, 
    \[
    |A_i^{\sigma} B_i| > \frac{1}{2}|A_i|^{0.05} |B_i| = |A|^{0.05} |B_i|,
    \]     
   provided that $C$ is sufficiently large. 
   Let $B' \subseteq b^{-1}B \cap S_{I_i}\times S_{J_i}\times \cdots \times S_{J_1}$ be such that for every $\sigma \in S_{I_i}$, we have
   \[|\pi^{-1}(\{\sigma\}) \cap B'| \ge 
   |J_i|!^{r_i} \times |J_{i-1}|!^{r_i-1} \cdots |J_1|!^{r_1},\]
   and such that $\pi(B') = B_i$.  We now obtain that \[ |B^{\sigma} A| = |b^{-1}B^{\sigma} A| \ge |B'^{\sigma} A| \ge |\pi(B'^{\sigma} A)| |J_i|!^{r_i} \times |J_{i-1}|!^{r_i-1} \cdots |J_1|!^{r_1}. \]
   Now provided that $C$ is sufficiently large the left hand side is at least \(
    |B||A|^{0.01},
    \)
    which completes the proof.
\end{proof}

We obtain the following lemma as an immediate corollary. 
\begin{lem}\label{lem: Growth for large sets}
There exists $C,c,n_0>0$, such that if $n>n_0$ and $G=A_n$. Then for every subsets $A_1\subseteq A_i\subseteq G$ of size $\ge 2^{C\sqrt{n}\log n},$ there exist $\sigma_1,\ldots, \sigma_i \in G$ with  $|A_1^{\sigma_1} \cdots A_i^{\sigma_i}| \ge \min(|A_1 |^c \cdots |A_i|^c, n^{cn}).$
\end{lem}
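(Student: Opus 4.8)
The plan is to deduce Lemma~\ref{lem: Growth for large sets} from Lemma~\ref{lem: skew product for large sets} in exactly the same iterative fashion that Theorem~\ref{thm: Lie type LNS} was deduced from Theorem~\ref{thm:skew product theorem}. First I would fix $C,c,n_0$ so that Lemma~\ref{lem: skew product for large sets} applies with these constants, and assume $n>n_0$ so that every set of size $\ge 2^{C\sqrt n\log n}$ triggers the skew-product growth. Given sets $A_1\subseteq\cdots\subseteq A_i\subseteq G=A_n$ (or rather arbitrary $A_1,\dots,A_i$ each of size $\ge 2^{C\sqrt n\log n}$), I would build the product greedily from the right: set $P_1=A_1$, and having chosen $\sigma_2,\dots,\sigma_{j}$ with $P_{j}=A_j^{\sigma_j}\cdots A_1^{\sigma_1}$ — actually it is cleaner to grow $P_j = A_1^{\sigma_1}\cdots A_j^{\sigma_j}$ — apply Lemma~\ref{lem: skew product for large sets} with the roles $A\leftarrow A_{j+1}$ and $B\leftarrow P_j$ to obtain $\sigma_{j+1}$ with $|A_{j+1}^{\sigma_{j+1}}P_j| \ge \min(|A_{j+1}|^{c}|P_j|,\,|G|^{c})$, where $|G|^c$ may be replaced by $n^{cn}$ since $|A_n| = n!/2 \ge n^{c'n}$ for a suitable constant.

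The bookkeeping then runs as follows. As long as the threshold $n^{cn}$ has not yet been reached, each step multiplies the size by a factor $|A_{j+1}|^c$, so after all $i$ steps either $|P_i|\ge n^{cn}$ — in which case we are done, since the right-hand side of the claimed inequality is $\min(|A_1|^c\cdots|A_i|^c,\,n^{cn})\le n^{cn}\le |P_i|$ — or else every step was in the multiplicative regime and $|P_i| = |A_1^{\sigma_1}\cdots A_i^{\sigma_i}| \ge |A_1|^{c}\cdots|A_i|^{c}$ by telescoping. In either case $|A_1^{\sigma_1}\cdots A_i^{\sigma_i}|\ge\min(|A_1|^c\cdots|A_i|^c,\,n^{cn})$, which is the assertion with $c$ unchanged (one may shrink $c$ to absorb the constant in $|G|^c\ge n^{cn}$). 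A small point to check is that Lemma~\ref{lem: skew product for large sets} requires only $|A_{j+1}|\ge 2^{C\sqrt n\log n}$ and places no lower bound on $|B|=|P_j|$, so the hypothesis is available at every step regardless of how $P_j$ has grown; this is why we grow on the $B$-side and keep feeding in the (large) sets $A_{j+1}$ as the $A$-side.

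The argument is essentially immediate once Lemma~\ref{lem: skew product for large sets} is in hand, so there is no real obstacle; the only thing to be slightly careful about is the interplay of the two caps ($|G|^c$ versus $n^{cn}$) and making sure the constant $c$ in the conclusion can be taken the same as (or a fixed fraction of) the one in Lemma~\ref{lem: skew product for large sets}. I would phrase the proof as: "Apply Lemma~\ref{lem: skew product for large sets} iteratively, exactly as in the proof of Theorem~\ref{thm: Lie type LNS}," and then spell out the two-line telescoping computation and the replacement $|G|^c\ge n^{cn}$.
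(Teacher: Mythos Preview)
Your proposal is correct and matches the paper's proof exactly: the paper's entire argument is the single line ``The statement follows by iterating Lemma~\ref{lem: skew product for large sets},'' and you have spelled out precisely that iteration together with the harmless replacement of the cap $|G|^{c}$ by $n^{cn}$. The only cosmetic slip is the order in which you grow the product (with $P_j=A_1^{\sigma_1}\cdots A_j^{\sigma_j}$ and then prepending $A_{j+1}^{\sigma_{j+1}}$ you land in the wrong order); this is fixed immediately by instead starting from $P_1=A_i$ and prepending $A_{i-1},A_{i-2},\ldots$ in turn.
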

\begin{proof}
    The statement follows by iterating Lemma \ref{lem: skew product for large sets}.
\end{proof}

\subsection{Completing the proofs}

We now complete the proof of Theorem \ref{thm:alternating growth}, and the Liebeck--Nikolov--Shalev conjecture.  
We will make use of the following concatenation claim. It states that a lemma yielding growth  of $|A_1^{\sigma_1}\cdots A_{i}^{\sigma_i}|$ up to size $\ell$, and a second lemma implying growth up to size $m$, but only when the sets $A_i$ are of size $\ge \ell$, can be concatenated to yield growth up to size $m$ for all sets $A_i$.

\begin{claim}\label{claim:concatenation}
Let $G$ be a group and let $\ell <m$ be integers and $c\in (0,1)$. Suppose that for all $k$ and all sets $A_1,\ldots,A_k$ there exist $\sigma_1,\ldots, \sigma_k$ with  
\[
|A_1^{\sigma_1} \cdots A_k^{\sigma_k}|\ge \min(|A_1|^c \cdots |A_k|^{c}, \ell).
\]
Suppose further that for all $k$ and all sets $A_1,\ldots , A_k$ of size $\ge \ell$ there exist $\sigma_1,\ldots ,\sigma_k$ with 
$|A_1^{\sigma_1}\cdots A_k^{\sigma_k}|\ge \min(|A_1|^c \cdots |A_k|^{c}, m).$ Then for all $k$ and all sets $A_1,\ldots, A_k$ there exist $\sigma_1,\ldots, \sigma_k\in G$ with 
\[
|A_1^{\sigma_1}\cdots A_k^{\sigma_k}|\ge \min(|A_1|^{c^2/4} \cdots |A_k|^{c^2/4}, m).
\]
\end{claim}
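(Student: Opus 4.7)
The plan is to chain the two hypotheses via a greedy contiguous partition of $[k]$. If $\prod_i|A_i|^{c^2/4}\le\ell$, I would invoke Hypothesis 1 directly, noting that $\min(\prod|A_i|^c,\ell)\ge\prod|A_i|^{c^2/4}$ since $c\ge c^2/4$. So henceforth assume $\prod|A_i|^{c^2/4}>\ell$.

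My next step is to partition $[k]$ greedily into contiguous blocks: scan the indices in order, accumulate into the current block $B$, and close $B$ once $\prod_B|A_j|^c\ge\ell$ (producing a \emph{medium} block). If instead the next index $i_0$ satisfies $|A_{i_0}|\ge\ell^{1/c}$, close $B$ (possibly unsaturated) and start a new singleton \emph{large} block $\{i_0\}$. Any \emph{residual}---an unsaturated small block created by hitting a large index or by reaching the end of the scan---is absorbed into the adjacent block: a middle residual into the following large block, a tail residual into the preceding large or medium block. The standing assumption $\prod|A_i|^{c^2/4}>\ell$ rules out a single residual spanning all of $[k]$. For each resulting block $B_j$ I build a set $C_j\subseteq G$ with $|C_j|\ge\ell$: for a large block (with possibly an absorbed residual), set $\sigma_i=e$ for $i\in B_j$ and take $C_j=\prod_{i\in B_j}A_i$, so that $|C_j|\ge|A_{i_0}|\ge\ell^{1/c}\ge\ell$ by the trivial bound $|XY|\ge|Y|$; for a medium block (with possibly an absorbed residual), apply Hypothesis 1 inside $B_j$ to get conjugations $\{\sigma_i\}_{i\in B_j}$ with $C_j:=\prod_{i\in B_j}A_i^{\sigma_i}$ satisfying $|C_j|\ge\min(\prod_{B_j}|A_i|^c,\ell)=\ell$.

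Next, I apply Hypothesis 2 to $C_1,\ldots,C_r$, obtaining $\tau_1,\ldots,\tau_r$ with $|C_1^{\tau_1}\cdots C_r^{\tau_r}|\ge\min(\prod_j|C_j|^c,m)$. Setting $\sigma'_i:=\sigma_i\tau_{j(i)}$ where $j(i)$ is the block containing $i$, the product $A_1^{\sigma'_1}\cdots A_k^{\sigma'_k}$ coincides with $C_1^{\tau_1}\cdots C_r^{\tau_r}$. It remains to verify the per-block inequality $|C_j|^c\ge\prod_{B_j}|A_i|^{c^2/4}$, which, multiplied across $j$, yields $\prod_j|C_j|^c\ge\prod_i|A_i|^{c^2/4}$. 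For a large block with residual $R$: $|C_j|^c\ge|A_{i_0}|^c$, the target contribution is $|A_{i_0}|^{c^2/4}\prod_R|A_i|^{c^2/4}\le|A_{i_0}|^{c^2/4}\ell^{c/4}$ (since $\prod_R|A_i|^c<\ell$), and the required $|A_{i_0}|^{c-c^2/4}\ge\ell^{c/4}$ follows from $|A_{i_0}|\ge\ell^{1/c}$ together with $c\le 2$. For a medium block (possibly with absorbed residual): $\prod_{B_j}|A_i|^c\le\ell^3$ (at most $\ell^2$ from the just-saturated medium part, plus at most $\ell$ from any absorbed residual), so the target is $\le\ell^{3c/4}\le\ell^c\le|C_j|^c$.

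The main obstacle will be the careful bookkeeping in the residual absorption step: the worst case, a residual tail merged into a medium block, makes $\prod_{B_j}|A_i|^c$ as large as $\ell^3$ and therefore pins the achievable exponent at $c^2/3$---the statement's $c^2/4$ sits comfortably within this margin.
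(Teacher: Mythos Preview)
Your proof is correct and follows the same overall strategy as the paper: greedily partition $[k]$ into contiguous blocks that just reach the threshold $\ell$, apply Hypothesis~1 inside each block to get sets $C_j$ of size $\ge\ell$, then apply Hypothesis~2 to the $C_j$'s and compose the conjugations.

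The paper's execution is somewhat cleaner, and it is worth noting why. The paper does \emph{not} distinguish ``large'' singletons from ``medium'' blocks, nor does it absorb residuals mid-stream. It simply takes each minimal interval $I_j$ with $\prod_{I_j}|A_i|^c\ge\ell$ and observes two inequalities for $B_j:=\prod_{I_j}A_i^{\sigma_i}$: first, $|B_j|\ge\ell\ge\prod_{I_j\setminus\{\text{last}\}}|A_i|^c$ by minimality; second, $|B_j|\ge|A_{\text{last}}|$ trivially. Taking the geometric mean of these two gives $|B_j|\ge\prod_{I_j}|A_i|^{c/2}$ uniformly, with no case split. This single line replaces your entire large/medium/residual bookkeeping. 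The remaining tail interval $I'$ is handled at the very end by the observation $\prod_{I'}|A_i|\le\prod_{I_1}|A_i|$, which costs another factor of $2$ in the exponent and yields the same $c^2/4$.

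So your argument works, but the geometric-mean trick is the idea worth internalising: it automatically swallows the ``large last element'' case that you carved out by hand. (A small omission in your write-up: a large block that is also the final non-residual block could absorb \emph{two} residuals, one before and one after; the resulting bound $|A_{i_0}|^{c-c^2/4}\ge\ell^{c/2}$ still holds for $c\le 2$, so nothing breaks.)
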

\begin{proof}

We may find consecutive intervals of integers  $I_1, \ldots , I_r, I'$ covering $\{1,\ldots, k\}$, such that each interval $I_j$ is minimal with respect to $\prod_{i\in I_j}|A_i|^{c} \ge \ell,$ and where $\prod_{i\in I'}|A_i|^c < \ell.$ 
If $r= 0,$ and $I' = \{1,\ldots , k\},$ then the statement clearly holds. So suppose that $r>1$, and let $\ell_j$ by the last number in the interval $I_j$. 
Then by hypothesis for all $j$ there exists a set $B_j$ of the form \(
B_j:= \prod_{i\in I_j}A_i^{\sigma_i}
\)
 with 
 \[
 |B_j|\ge \ell \ge \prod_{i\in I_j\setminus{\ell_j}}|A_i|^{c} 
 \]
 and also trivially
 \(
    |B_j|\ge |A_{\ell_j}|\ge |A_{\ell_j}|^{c}.
 \)
 Taking geometric mean we have  
\[
|B_j|\ge \max(\ell, \prod_{i\in I_j} |A_i|^{c/2}).
\]

 We may now apply the hypothesis with the sets $B_i$ to obtain that there exits $\tau_1,\ldots, \tau _r$ with \[|B_1^{\tau_1}\cdots B_{r}^{\tau_r}|\ge \min (|B_1|^{c}\cdots |B_r|^{c}, m) \ge \min(m , \prod_{j=1}^r\prod_{i\in I_j} |A_i|^{c^2/2}.) 
 \] 
 This completes the proof as 
 \[
 \prod_{j=1}^r\prod_{i\in I_j} |A_i| \ge \sqrt{\prod_{i=1}^{k} |A_i|}.
 \]
 
 Indeed, $\prod_{i\in I'}|A_i|^c \le \ell \le \prod_{i\in I_1} |A_i|^c.$  
\end{proof}

\begin{proof}[Proof of Theorem \ref{thm:alternating growth}]
Let $c_1$ be a sufficiently small constant and let $C$ be sufficiently large with respect to $c_1$ and finally, let $c$ be sufficiently large with respect to $C$.  
    Let $\ell_1 = C^n, \ell_2 = 2^{c_1 C n \sqrt{\log n}}, \ell_3 = 2^{c_1 C^2 n\log^{3/4} n}, \ell_4 = n^{cn}$. The theorem follows from Claim \ref{claim:concatenation} applied multiple times, where Lemma \ref{lem: always growing small sets alternating groups} implies the corresponding growth result up to $\ell_1$, and Lemma \ref{lem: medium sets always growing alternating groups} then yields the growth result from  $\ell_1$ to $\ell_2$. Combining these Lemma with Claim  Claim \ref{claim:concatenation} yields a growth result up to $\ell_2$. Moreover, Lemma \ref{lem: medium sets always growing alternating groups} yields a growth result from $\ell_2$ to $\ell_3$ and Lemma \ref{lem: Growth for large sets} yields growth from $\ell_3$ up to $\ell_4.$ So combining these with Claim \ref{claim:concatenation} applies twice completes the proof of the lemma. 
\end{proof}

\begin{proof}[Proof of Theorem \ref{thm:main intro}]
    For finite simple groups of Lie type this follows from Theorem \ref{thm: Lie type LNS}. To complete the proof for alternating groups note that by Theorem \ref{thm:alternating growth} there exists an absolute constant $c'>0,$ such that there are $i$ conjugates of $A$ whose product has size at least $\min(|A|^{c'i}, n^{c'n}).$ By Theorem \ref{thm: GLPS main theorem} there exist a constant $C= C(c')>0$, such that for all subets $B$ of size $\ge n^{c'n}$ there are $C$ conjugates of $B$ whose product is $A_n.$ This in particular shows that there are $C\left\lceil \frac{\log |G|}{c'\log |A|}\right\rceil$ conjugates of $A$ whose product is the whole group, thereby completing the proof of the original form of the Liebeck--Nikolov--Shalev conjecture. 
    
    This also completes the proof of our stronger version with $c = \frac{c'}{2C}.$ Indeed, when $|A|^{ci} \le n^{cn}$ there are $i$ conjugates of $A$ whose product has size at least $\min(|A|^{c'i}), n^{cn}) \ge \min(|A|^{ci}, n^{cn}) = |A|^{ci}.$ On the other hand, when  $|A|^{ci} \ge n^{cn}$ we have $i\ge C\left\lceil \frac{\log |G|}{c'\log |A|}\right\rceil$ and therefore there are $i$ conjugates of $A$ whose product is the whole group.
\end{proof}

\bibliographystyle{alpha}
\bibliography{refs}
\end{document}